\documentclass{amsart} 
\usepackage{amsmath}
\usepackage[mathscr]{eucal} 
\usepackage{amssymb}
\usepackage{latexsym}
\usepackage{amsthm} 
\theoremstyle{plain}
\newtheorem{theorem}{Theorem}[section]

\newtheorem{theorema}{Theorem A\!\!}
\newtheorem{theoremb}{Theorem B\!\!}

\newtheorem{lemma}[theorem]{Lemma}

\newcommand{\diam}{\mathop{\mathrm{diam}}\nolimits} 
\numberwithin{equation}{section}  
\theoremstyle{definition}

\theoremstyle{remark}
\newtheorem{remark}[theorem]{Remark}

\def\XXint#1#2#3{{\setbox0=\hbox{$#1{#2#3}{\int}$}
\vcenter{\hbox{$#2#3$}}\kern-.5\wd0}}

\title[Decomposition of neighborhood of a circular cone]
{On decomposition of  neighborhood of a circular cone related to principal 
curvatures}  
\author{Shuichi Sato} 
 
\begin{document} 
\address{Department of Mathematics,
Faculty of Education,
Kanazawa University,
Kanazawa 920-1192,
Japan}
\email{shuichi@kenroku.kanazawa-u.ac.jp, shuichipm@gmail.com}

\begin{abstract} 
We give an alternative proof of a result on the uniform overlap of 
the algebraic sums of the  sets arising from a decomposition of 
a neighborhood of a circular cone in 
$\Bbb R^3$.  It is known that the uniform overlap result can be applied to 
make a unified approach for the proofs of a theorem on the maximal 
Bochner-Riesz operator on $\Bbb R^2$ and a theorem on the 
maximal spherical means on $\Bbb R^2$.  
\end{abstract}
  \thanks{2020 {\it Mathematics Subject Classification.\/}
  Primary 42B25. 
  \endgraf
  {\it Key Words and Phrases.} maximal Bochner-Riesz operator, 
maximal spherical mean, overlap theorem for a cone, Kakeya maximal function.}
\thanks{The author is partly supported
by Grant-in-Aid for Scientific Research (C) No. 20K03651, Japan Society for the  Promotion of Science.}

\maketitle 

\section{Introduction}\label{sec1}  
Let 
\begin{equation*} 
T^\lambda_Rf(x)=\int_{|\xi|<R}\hat{f}(\xi)(1-|R^{-1}\xi|^2)_+^\lambda 
e^{2\pi i\langle x, \xi\rangle}\, d\xi    
\end{equation*} 
be the Bochner-Riesz operator of order $\lambda$ on $\Bbb R^2$,
 where 
\begin{equation*} 
\hat{f}(\xi)= \int_{\Bbb R^2} f(x)e^{-2\pi i\langle x, \xi\rangle}\, dx
\end{equation*} 
is the Fourier  transform and $\langle x, \xi\rangle=x_1\xi_1+x_2\xi_2$, 
$x=(x_1,x_2)$, $\xi=(\xi_1, \xi_2)$,  
denotes the inner product.  Let 
\begin{equation*} 
T_*^\lambda f(x)=\sup_{R>0}|T_R^\lambda f(x)|
\end{equation*} 
be the maximal Bochner-Riesz operator.    
\par 
The following is known (\cite{Ca}). 
\begin{theorema}
  If $\lambda>0$, $T_*^\lambda$ is bounded on $L^4(\Bbb R^2):$  
\begin{equation*} 
\|T_*^\lambda f\|_4\leq C_\lambda \|f\|_4. 
\end{equation*} 
\end{theorema}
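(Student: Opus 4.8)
\medskip
\noindent\emph{Proof sketch.}
The plan is to reduce the maximal inequality to an $L^4$ square function estimate and then to prove that estimate by decomposing a $\delta$-neighborhood of the circular cone in $\Bbb R^3$ into curved plates, the decisive input being a uniform overlap bound for the algebraic sums of those plates together with the $L^2$ boundedness of the Kakeya maximal operator in the plane; the summation over the scales of the decomposition is driven by the gain $\delta^{\lambda}$ produced by the Bochner--Riesz exponent. Concretely, first fix a smooth partition $(1-|\eta|^2)_+^{\lambda}=\gamma_0(\eta)+\sum_{j\ge1}m_j(\eta)$, where $\gamma_0$ is smooth with compact support in $\{|\eta|<1\}$ and each $m_j$ is smooth, supported where $1-|\eta|^2\sim 2^{-j}$, with $\|m_j\|_\infty\lesssim 2^{-j\lambda}$ and $\|\partial^\alpha m_j\|_\infty\lesssim 2^{-j\lambda}2^{j|\alpha|}$. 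Setting $T^{\lambda,j}_Rf=(m_j(\cdot/R)\hat f)^\vee$ and $T^{\lambda,0}_Rf=(\gamma_0(\cdot/R)\hat f)^\vee$ gives $T^\lambda_*f\le T^{\lambda,0}_*f+\sum_{j\ge1}T^{\lambda,j}_*f$, with $T^{\lambda,0}_*f\lesssim Mf$ (convolution with dilates of a fixed Schwartz function), which is $L^4$ bounded; so it suffices to prove $\|T^{\lambda,j}_*f\|_4\lesssim 2^{-j\delta_0}\|f\|_4$ for some $\delta_0=\delta_0(\lambda)>0$ and then sum the geometric series.

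Write $\delta=2^{-j}$. Since the multiplier of $T^{\lambda,j}_Rf$ is supported in $\{|\xi|\sim R\}$, this quantity tends to $0$ as $R\to0$ and as $R\to\infty$, so by the fundamental theorem of calculus and the Cauchy--Schwarz inequality
\[
T^{\lambda,j}_*f(x)^2\le 2\Big(\int_0^\infty|T^{\lambda,j}_Rf(x)|^2\,\frac{dR}{R}\Big)^{1/2}\Big(\int_0^\infty|R\,\partial_R T^{\lambda,j}_Rf(x)|^2\,\frac{dR}{R}\Big)^{1/2}.
\]
The multipliers of the two factors are supported in the same $\delta$-shell and have sizes $\lesssim 2^{-j\lambda}$ and $\lesssim 2^{-j\lambda}\delta^{-1}$ (differentiating in $R$ costs $\delta^{-1}$), with matching derivative bounds, so each factor is a constant multiple of a normalized shell square function
\[
\mathcal G_\delta g(x)=\Big(\int_0^\infty|(\phi_\delta(\cdot/R)\hat g)^\vee(x)|^2\,\frac{dR}{R}\Big)^{1/2},\qquad \phi_\delta\ \text{a normalized bump on}\ \{1-|\eta|^2\sim\delta\}.
\]
A further use of the Cauchy--Schwarz inequality, now on $\Bbb R^2$, yields $\|T^{\lambda,j}_*f\|_4\lesssim 2^{-j\lambda}\delta^{-1/2}\|\mathcal G_\delta f\|_4$, while on $L^2$ one has trivially $\|\mathcal G_\delta g\|_2\lesssim\delta^{1/2}\|g\|_2$ (the $R$-support has logarithmic length $\sim\delta$). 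Hence the theorem reduces to the shell square function estimate $\|\mathcal G_\delta g\|_4\lesssim \delta^{1/2-\epsilon}\|g\|_4$ for every $\epsilon>0$: this gives $\|T^{\lambda,j}_*f\|_4\lesssim 2^{-j\lambda}2^{j/2}2^{-j(1/2-\epsilon)}\|f\|_4=2^{-j(\lambda-\epsilon)}\|f\|_4$, summable in $j$ as soon as $\epsilon<\lambda$.

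It remains to prove this shell estimate, and this is where the geometry of the cone enters. By Littlewood--Paley in $g$ one may assume $\hat g$ lies in a fixed dyadic annulus, so only $R\sim1$ matters; encoding the parameter $R$ as a third frequency variable recasts the shell estimate as an $L^4(\Bbb R^3)$ bound for the square function $\big(\sum_k|S_kF|^2\big)^{1/2}$, where the $S_k$ are the Fourier restrictions to the $\sim\delta^{-1/2}$ curved plates $P_k$ that cover a $\delta$-neighborhood of the circular cone $\{|\xi|=\tau\}$, each plate having dimensions $1\times\delta^{1/2}\times\delta$ in the natural conic frame (the $\delta^{1/2}$ reflecting the single non-vanishing principal curvature) --- exactly the setting of the present paper. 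By a standard argument going back to C\'ordoba one expands $\|(\sum_k|S_kF|^2)^{1/2}\|_4^2=\|\sum_k|S_kF|^2\|_2$ and uses that products of plate-pieces have frequency support in algebraic sums $P_k+P_l$ of plates: the uniform overlap of these sums limits the number of pairs contributing at a given frequency, and the remaining bilinear expression is controlled by the $L^2$ Kakeya maximal estimate for $\delta^{1/2}$-eccentric rectangles in $\Bbb R^2$, which costs only a factor $O(\delta^{-\epsilon})$. Every step here except the overlap input is soft --- the Cauchy--Schwarz/fundamental-theorem linearization, the frequency decompositions, the geometric-series summation, and C\'ordoba's planar Kakeya machinery are all routine. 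The crux, which I expect to be the main obstacle and which is precisely the assertion this paper re-proves, is the uniform overlap bound for the algebraic sums $P_k+P_l$ of the plates --- that each point of $\Bbb R^3$ lies in only a controlled number of them.
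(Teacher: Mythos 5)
You should first note the mismatch of scope: this paper never proves Theorem A. It quotes it from Carbery \cite{Ca} (with the unified treatment in \cite[Chap.~2]{Sogg}) and contributes only one ingredient of that treatment, namely the overlap theorem for the cone (Theorem \ref{th1.1}). Your outline follows that same known route --- dyadic decomposition of $(1-|\eta|^2)_+^\lambda$, the fundamental-theorem/Cauchy--Schwarz passage from $T_*^{\lambda,j}$ to a normalized $\delta$-shell square function (the numerology $2^{-j\lambda}\delta^{-1/2}$ against a target bound $\delta^{1/2-\epsilon}$ is correct), and then a cone-in-$\Bbb R^3$ reformulation with plate decomposition, biorthogonality and a Kakeya maximal estimate. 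So what you have written is essentially a reduction of Theorem A to the very statement this paper is devoted to proving, plus other nontrivial known inputs (C\'ordoba's $L^2$ Kakeya bound, resp.\ the $L^2(\Bbb R^3)$--$L^2(\Bbb R^2)$ estimates for maximal averages along light-cone rays), rather than a proof.

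The genuine gap sits exactly at what you call the crux, and it is not only that you defer it: the overlap statement in the form you invoke it is false. You take plates $P_k$ of dimensions $1\times\delta^{1/2}\times\delta$, i.e.\ of full length along the generators, covering the whole cone, and assert $\sum_{k,l}\chi_{P_k+P_l}\leq C$. Besides the antipodal failure noted in the Remark after Theorem \ref{th1.1}, the full radial length already destroys bounded overlap: write $e(\theta)=(\cos\theta,\sin\theta)$ and consider a point $(\xi,\eta)$ with $\xi=(s,0)$, $\eta=3$, $s=3\cos t_0$ for some fixed $t_0$ in the admissible aperture. The equations $r_1e(\theta_1)+r_2e(\theta_2)=\xi$, $r_1+r_2=\eta$ admit, by the implicit function theorem (the relevant Jacobian is $\asymp\sin^2 t_0\neq0$), a one-parameter family of exact solutions with $\theta_1=t$ ranging over an interval of length $\asymp1$ and $r_1,r_2\in[1,2]$; as $t$ varies, the pair of $\delta^{1/2}$-sectors containing $(\theta_1,\theta_2)$ changes $\gtrsim\delta^{-1/2}$ times, so this single point lies in $\gtrsim\delta^{-1/2}$ of the sets $P_k+P_l$. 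The true statement --- precisely Theorem \ref{th1.1} here, or \cite[(2.4.23)]{Sogg} --- requires in addition that both factors be localized in $|\xi|$ to intervals $J_1,J_2$ of length at most $\delta^{1/2}$ (with constants uniform in $J_1,J_2$), and using it entails an extra radial decomposition together with an orthogonality/summation argument in that variable which your sketch does not contain. A secondary slip in the same paragraph: you expand $\|\sum_k|S_kF|^2\|_2$, but $|S_kF|^2$ has Fourier support in the difference set $P_k-P_k$, which contains the origin for every $k$, so the overlap of the sums $P_k+P_l$ gives nothing there; the sums enter when one expands $\|(\sum_kS_kF)^2\|_2=\|\sum_{k,l}S_kF\,S_lF\|_2$ to pass from the full operator to the square function, after which the Kakeya step is applied --- so rather more than the overlap bound is being treated as ``soft.''
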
 
The $L^4$ boundedness for $T_1^\lambda$ is shown in \cite{Ca-Sj}. 
See also \cite{Ca2} for related results. 
\par 
Let 
\begin{equation*} 
S_tf(x)=\int_{S^{1}} f(x-t\theta)\, d\sigma(\theta)
\end{equation*} 
be the spherical mean on $\Bbb R^2$, where $S^1=\{x\in \Bbb R^2: |x|=1\}$ 
is the unit circle 
and $\sigma$ denotes the Lebesgue arc length measure on $S^1$, and let 
\begin{equation*} 
S_*f(x)=\sup_{t>0} |S_tf(x)| 
\end{equation*} 
be the maximal spherical mean.  
\par 
The following result is known (\cite{Bo}). 
\begin{theoremb}  
The maximal operator $S_*$ is bounded on $L^p(\Bbb R^2)$ for $p>2:$  
\begin{equation*} 
\|S_*f\|_p\leq C_p \|f\|_p. 
\end{equation*} 
\end{theoremb}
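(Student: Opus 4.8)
The plan is to realize $S_*$ as a maximal operator attached to the circular cone in $\Bbb R^3$ and to control the cone multiplier through a decomposition of a neighborhood of the cone into plates adapted to its single nonvanishing principal curvature. Write $S_tf=f*\mu_t$, where $\mu_t$ is the arc length measure on the circle of radius $t$; then $\widehat{\mu_t}(\xi)=\widehat{d\sigma}(t\xi)$, and $\widehat{d\sigma}$ has the classical asymptotics $\widehat{d\sigma}(\xi)=|\xi|^{-1/2}\bigl(a_+(\xi)e^{2\pi i|\xi|}+a_-(\xi)e^{-2\pi i|\xi|}\bigr)$ for $|\xi|\geq1$, with $a_\pm$ symbols of order $0$. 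Cut off the wave frequency, writing $\widehat{d\sigma}(t\xi)=\sum_{j\geq0}\beta_j(t\xi)\widehat{d\sigma}(t\xi)$ with $\beta_0$ supported in $\{|\xi|\leq2\}$ and $\beta_j$ supported in $\{|\xi|\sim2^j\}$ for $j\geq1$, and let $S_t^j$ be defined by $\widehat{S_t^jf}(\xi)=\beta_j(t\xi)\widehat{d\sigma}(t\xi)\widehat f(\xi)$, so that for $t\in[1,2]$ the operator $S_t^j$ is frequency-localized to $\{|\xi|\sim2^j\}$. The low-frequency part $\sup_{t>0}|S_t^0f|$ is dominated pointwise by the Hardy--Littlewood maximal function, hence bounded on $L^p$ for every $p>1$. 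For $j\geq1$, two structural facts organize the rest: by dilation invariance one may restrict to $t\in[1,2]$ provided one sums over dyadic scales $2^k$, and for $t$ of size $2^k$ the operator $S_t^j$ depends only on $\widehat f$ restricted to $\{|\xi|\sim2^{j-k}\}$. The latter makes the sum over $k$ harmless once Littlewood--Paley theory is invoked (valid for $p\geq2$, via $\sum_m\|P_mf\|_p^p\lesssim\|f\|_p^p$), so the problem reduces to proving, for each $p>2$, a single-scale bound $\|\sup_{1\leq t\leq2}|S_t^jf|\|_p\lesssim 2^{-j\gamma}\|f\|_p$ with $\gamma=\gamma(p)>0$, and then summing in $j$.

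Inserting the asymptotics of $\widehat{d\sigma}$, for $t\in[1,2]$ one has $S_t^jf=2^{-j/2}U_t^jf$ with $U_t^j$ a half-wave Fourier integral operator of order $0$ at frequency $2^j$ (plus a negligible lower-order error), so a factor $2^{-j/2}$ is already gained. I would then apply Sobolev embedding in the variable $t$ \emph{at exponent $p$} (rather than at exponent $2$): writing $U^jf$ for the space-time function $(x,t)\mapsto U_t^jf(x)$ and noting that $\partial_tU^jf$ has the same structure as $U^jf$ up to a factor $\sim2^j$, the fundamental theorem of calculus followed by H\"older's inequality gives, modulo a term already decaying in $j$,
\begin{equation*}
\bigl\|\sup_{1\leq t\leq2}|S_t^jf|\bigr\|_p^p\lesssim \|S^jf\|_{L^p(\Bbb R^2\times[1,2])}^{p-1}\|\partial_tS^jf\|_{L^p(\Bbb R^2\times[1,2])}\lesssim 2^{j(1-p/2)}\|U^jf\|_{L^p(\Bbb R^2\times[1,2])}^{p}.
\end{equation*}
The fixed-time bound $\|U_t^jf\|_p\lesssim 2^{j(1/2-1/p)}\|f\|_p$ alone only yields $\|\sup_{1\leq t\leq2}|S_t^jf|\|_p\lesssim\|f\|_p$, with no gain; what is needed is the \emph{local smoothing} estimate
\begin{equation*}
\|U^jf\|_{L^p(\Bbb R^2\times[1,2])}\lesssim 2^{j(1/2-1/p-\gamma)}\|f\|_{L^p(\Bbb R^2)},\qquad \gamma=\gamma(p)>0,\ p>2,
\end{equation*}
which then gives exactly $\|\sup_{1\leq t\leq2}|S_t^jf|\|_p\lesssim 2^{-j\gamma}\|f\|_p$.

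To prove this local smoothing estimate I would follow the square function method of Mockenhaupt--Seeger--Sogge. Split the frequency annulus $\{|\xi|\sim2^j\}$ into $\sim2^{j/2}$ plates $P_\nu$ of angular width $2^{-j/2}$ --- exactly the decomposition of a neighborhood of the circular cone governed by the principal curvatures --- and write $U_t^jf=\sum_\nu U_t^{j,\nu}f$; on each plate the phase $e^{2\pi it|\xi|}$ agrees, up to a bounded symbol, with a linear one, so $U_t^{j,\nu}f$ is essentially the translate by $t\omega_\nu$ of the convolution of $f$ with a bump adapted to the slab dual to $P_\nu$ (of dimensions $\sim2^{-j}\times2^{-j/2}$), and as $t$ runs over $[1,2]$ this sweeps out a $2^{-j/2}$-tube of unit length in space-time. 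One then estimates the space-time $L^p$ norm of $\sum_\nu U^{j,\nu}f$ in two stages: first, a C\'ordoba-type reverse square function inequality for the cone bounds it, up to logarithmic factors, by the space-time $L^p$ norm of the square function $\bigl(\sum_\nu|U^{j,\nu}f|^2\bigr)^{1/2}$ --- and it is precisely here that the uniform-overlap property of the pairwise algebraic sums of the plates, lifted to the cone in $\Bbb R^3$, enters, to organize the pairs $(\nu,\nu')$ arising in the expansion of the square into a controlled number of almost-orthogonal families; second, that square function is controlled plate by plate, the near-orthogonality of the dual slabs being combined with the planar Kakeya (Nikodym) maximal function to exploit the separation of the $2^{-j/2}$-tubes and thereby win a positive power of $2^{-j}$. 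This produces $\gamma(p)>0$ for $p$ in a range $(2,p_0]$ (with $p_0=4$ the natural endpoint of the C\'ordoba-type step), and hence $S_*$ bounded on $L^p$ for $2<p\leq4$; the remaining range $p>4$ then follows by interpolating the sublinear operator $S_*$ against the trivial bound $\|S_*f\|_\infty\leq\|f\|_\infty$.

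The main obstacle is the last step --- the local smoothing, equivalently the square function, estimate --- whose proof must marry two rather different mechanisms: the Fourier-side almost orthogonality supplied by the overlap theorem for the algebraic sums of the cone plates (the very result reproved in this paper), and the physical-side packing of the associated tubes governed by the Kakeya maximal function. Making these cooperate, and arranging matters --- for instance by a slightly sub-optimal choice of the number of plates, or by an iteration --- so that the unavoidable logarithmic losses are absorbed into the gain $2^{-j\gamma}$, is the delicate point. By contrast, the reductions of the first two paragraphs --- the dilation scaling, the Littlewood--Paley summation over $k$, and the $L^p$-Sobolev passage from the maximal function to a space-time $L^p$ norm --- together with the final summation in $j$, are routine.
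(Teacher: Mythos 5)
You should be aware that the paper does not prove Theorem B at all: it is quoted as a known result of Bourgain \cite{Bo}, and the paper's own contribution is confined to an alternative proof of the overlap statement (Theorem~\ref{th1.1}, with the variant Theorem~\ref{th4.2}) that serves as one ingredient in the unified treatment of Theorems A and B in \cite[Chap.~2]{Sogg}. Measured against that cited approach, your outline follows essentially the intended route: half-wave operators, dyadic frequency localization with Littlewood--Paley summation over scales (legitimate for $p\geq 2$), Sobolev embedding in $t$ at exponent $p$, the $2^{-j/2}\times 2^{-j}$ plate decomposition of a neighborhood of the cone, almost orthogonality of the products $U^{j,\nu}f\,U^{j,\nu'}f$ via the bounded overlap of the algebraic sums of plates (exactly what Theorem~\ref{th1.1} provides), and the Kakeya/Nikodym maximal function for tubes along light rays. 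Your exponent bookkeeping is consistent: $2^{j(1-p/2)}\cdot 2^{jp(1/2-1/p-\gamma)}=2^{-jp\gamma}$, the low-frequency piece is indeed dominated by the Hardy--Littlewood maximal function, and interpolation with the trivial $L^\infty$ bound correctly handles $p>4$.

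As a proof, however, the proposal has a genuine gap, and it is the one you flag yourself: the entire analytic core is only described, not established. Concretely, two estimates are missing. First, the reverse square-function inequality for the cone at $p=4$; here the overlap theorem does the work (by Plancherel, bounded overlap of the supports $u_{\mu,J_1}+u_{\nu,J_2}$ gives $\|\sum_\nu F_\nu\|_4^2\leq C\,\|(\sum_\nu|F_\nu|^2)^{1/2}\|_4^2$ with no logarithmic loss), and you could simply cite Theorem~\ref{th1.1} for it, but you neither state it precisely nor indicate this mechanism beyond a phrase. Second, and more seriously, the sharp $L^2(\Bbb R^3)$--$L^2(\Bbb R^2)$ bound for the Kakeya maximal function defined by rays on the light cone, together with the duality argument by which it controls the space-time $L^4$ norm of the square function $(\sum_\nu|U^{j,\nu}f|^2)^{1/2}$ with a gain $2^{-j\gamma}$, is nowhere formulated (no definition of the maximal operator, no statement of its $\delta$-dependence, no accounting of how logarithmic losses are absorbed). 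This is precisely where the difficulty of Bourgain's theorem lives; the reductions you do carry out are the routine part. A smaller imprecision: the square-function mechanism does not by itself yield $\gamma(p)>0$ for every $p\in(2,4]$; the gain near $p=2$ should be obtained by interpolating the frequency-localized estimate at $p=4$ against the lossless $L^2$ bound $\|\sup_{1\leq t\leq 2}|S^j_tf|\|_2\lesssim\|f\|_2$ (itself a consequence of the same Sobolev-in-$t$ trick), which your framework permits but does not say.
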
 
We refer to \cite{SW} for a result analogous to Theorem B in 
$\Bbb R^n$,  $n\geq 3$.   
\par 
In \cite[Chap. 2]{Sogg}, a unified approach to the proofs of Theorems A and B 
are  presented. In the arguments, a geometric overlap theorem concerning 
a circular cone in $\Bbb R^3$ plays a crucial role. 
\par   
Let $\tau$ be a fixed large positive number. In this note we assume that 
$\tau>10^6$.  Set 
\begin{equation}\label{e.1.1}  
\Gamma_\mu=\{\xi \in \Bbb R^2\setminus \{0\}: 
\mu\tau^{-1/2}\leq \arg \xi <(\mu+1)\tau^{-1/2}\}, 
\end{equation}  
where $\mu\in \Bbb R$ with $|\mu|\leq \tau^{1/2}(\pi/8)-1$. Let 
\begin{equation}\label{e.1.2}
 \mathcal N= \{\mu\in \Bbb Z: |\mu|\leq \tau^{1/2}(\pi/8)-1\}, 
\end{equation}  
where $\Bbb Z$ denotes the set of integers. 
Let $J=[\alpha, \beta] \subset [1, 2]$. We assume that $|J|=\beta-\alpha 
\leq \tau^{-1/2}$.  
For $\mu\in \mathcal N$,   let 
\begin{gather}\label{e.1.3}  
U_{\mu,J}=\{(\xi,|\xi|)\in \Bbb R^2\times \Bbb R: \xi\in \Gamma_\mu, 
|\xi|\in J\} 
\end{gather} 
and 
\begin{align}\label{e.1.4}  
u_{\mu,J}&=
\{(\xi,\eta)\in \Bbb R^2\times \Bbb R: |\eta-|\xi||\leq \tau^{-1}, 
(\xi,|\xi|)\in U_{\mu,J}\}  
\\ 
&= \cup \{\{\xi\}\times [|\xi|-\tau^{-1}, |\xi|+\tau^{-1}]: 
(\xi,|\xi|)\in  U_{\mu,J}\}.                                  \notag 
\end{align} 
We note that $U_{\mu,J}\subset u_{\mu,J}$. 
\par 
We have the following result. 

\begin{theorem} \label{th1.1} 
Let $[\alpha_1, \beta_1], [\alpha_2, \beta_2] \subset [1,2]$. Set 
$J_i=[\alpha_i, \beta_i]$ and suppose that  
$|J_i|\leq \tau^{-1/2},$ $i=1, 2$. 
Let $u_{\mu,J_1}$, $u_{\mu,J_2}$ be defined as in \eqref{e.1.4} with 
$J_1, J_2$ in place of $J$.   
Then there exists a constant $C$ independent of $\tau$ and the intervals 
$J_1, J_2$ such that 
\begin{equation*}  
I=\sum_{(\mu, \nu)\in \mathcal N^2} \chi_{u_{\mu,J_1}+u_{\nu,J_2}} \leq C, 
\end{equation*} 
where $\mathcal N$ is as in \eqref{e.1.2},  
$u_{\mu,J_1}+u_{\nu,J_2}=\{(\xi,\eta)+(\xi',\eta'): 
(\xi,\eta)\in u_{\mu,J_1}, (\xi',\eta')\in u_{\nu,J_2}\}$ and $\chi_E$ denotes 
the characteristic function of a set $E$.   
\end{theorem}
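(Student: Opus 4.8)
\emph{Plan.} The plan is to fix the point $(\zeta,\rho)\in\Bbb R^2\times\Bbb R$ and to bound, by an absolute constant, the number of pairs $(\mu,\nu)\in\mathcal N^2$ with $(\zeta,\rho)\in u_{\mu,J_1}+u_{\nu,J_2}$. Unwinding the definitions, such a pair occurs exactly when there is a point $\xi$ with $\xi\in\Gamma_\mu$, $\zeta-\xi\in\Gamma_\nu$, $|\xi|\in J_1$, $|\zeta-\xi|\in J_2$, and $\big||\xi|+|\zeta-\xi|-\rho\big|\le 2\tau^{-1}$ — the last because $\eta+\eta'=\rho$ with $|\eta-|\xi||,|\eta'-|\zeta-\xi||\le\tau^{-1}$. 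Call such a $\xi$ \emph{admissible}. Since $\mu$ is read off from $\arg\xi$ and $\nu$ from $\arg(\zeta-\xi)$, each up to $O(1)$ ambiguity, it suffices to show that $\{\arg\xi:\xi\ \text{admissible}\}$ meets only $O(1)$ of the arcs $[\mu\tau^{-1/2},(\mu+1)\tau^{-1/2})$, and likewise for $\{\arg(\zeta-\xi):\xi\ \text{admissible}\}$. I would first record two consequences of \eqref{e.1.2}: every admissible $\xi$ has $\arg\xi$ and $\arg(\zeta-\xi)$ inside the cone $(-\pi/8,\pi/8)$, so the angle between $\xi$ and $\zeta-\xi$ is $<\pi/4$ and $\zeta$ lies in the same cone; and then $|\zeta|^2=|\xi|^2+|\zeta-\xi|^2+2|\xi||\zeta-\xi|\cos\big(\arg\xi-\arg(\zeta-\xi)\big)\ge 2+\sqrt2$, so $|\zeta|>1.8$.

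Next I would pass to an explicit parametrization. Rotating so that $\arg\zeta=0$ and writing $\xi=s\,\omega(a)$, $\zeta-\xi=t\,\omega(b)$ with $\omega(\theta)=(\cos\theta,\sin\theta)$, $s\in J_1$, $t\in J_2$, and (after possibly exchanging the roles of $a,b$) $-\pi/8<a<0<b<\pi/8$, solving $s\omega(a)+t\omega(b)=(|\zeta|,0)$ gives $s=|\zeta|\sin b/\sin(b-a)$, $t=-|\zeta|\sin a/\sin(b-a)$, and hence the exact identity
\[
s+t=|\zeta|\,\frac{\cos\frac{a+b}{2}}{\cos\frac{b-a}{2}} .
\]
Using this, the bound $|s+t-\rho|\le 2\tau^{-1}$, $|\zeta|>1.8$, and $\cos\frac{a+b}{2}-\cos\frac{b-a}{2}=-2\sin\frac a2\sin\frac b2$, the third constraint becomes
\[
\sin\tfrac a2\,\sin\tfrac b2=\frac{|\zeta|-\rho}{2|\zeta|}\,\cos\tfrac{b-a}{2}+O(\tau^{-1}),
\]
so the admissible $(a,b)$ lie in an $O(\tau^{-1})$-thin strip about the level curve $\{f=0\}$, where $f(a,b):=\sin\frac a2\sin\frac b2-\frac{|\zeta|-\rho}{2|\zeta|}\cos\frac{b-a}{2}$, and also in the $\lesssim\tau^{-1/2}$-thin strip $\{s(a,b)\in J_1\}$ about a level curve of $s$. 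If $b-a<\tau^{-1/2}$ there is nothing to prove, since then $|a|,b<\tau^{-1/2}$ and $\arg\xi,\arg(\zeta-\xi)$ already lie in one $O(\tau^{-1/2})$-window about $0=\arg\zeta$. Otherwise the perpendicular widths of the two strips are $\asymp\tau^{-1}/(b-a)\lesssim\tau^{-1/2}$ and $\asymp(b-a)\tau^{-1/2}\lesssim\tau^{-1/2}$ (using $|\nabla f|\asymp b-a$ and $|\nabla s|\asymp 1/(b-a)$, which follow from $|a|\asymp b$ — itself forced by $s,t\ge 1$), so the admissible set has diameter $\lesssim\tau^{-1/2}$ \emph{provided} the two level curves cross transversally, with angle bounded below by an absolute constant. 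That last point is the heart of the matter: it amounts to showing $|f_a s_b-f_b s_a|\gtrsim|\nabla f|\,|\nabla s|$, which a direct computation of the four partial derivatives — using $|\zeta|>1.8$, $a<0<b$, $b-a<\pi/4$, and $|a|\asymp b$ — yields. Summing the $O(1)$ resulting pairs $(\mu,\nu)$ over the $O(1)$ components then gives $I\le C$.

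Geometrically this is transparent, and I would use the picture to guide and check the estimates: the constraint $\big||\xi|+|\zeta-\xi|-\rho\big|\le 2\tau^{-1}$ places $\xi$ in an $O(\tau^{-1})$-neighborhood of the ellipse $E$ with foci $0,\zeta$ and string length $\rho$, while $|\xi|\in J_1$, $|\zeta-\xi|\in J_2$ confine $\xi$ to two annuli of width $\le\tau^{-1/2}$; by the reflection property, $\tfrac{d}{d\ell}|x|=\cos(\delta_x/2)$ along $E$ (with $\ell$ arc length and $\delta_x$ the angle between $x$ and $\zeta-x$), which by the cone condition is $\ge\cos(\pi/8)$ on the relevant part of $E$, so the annulus $\{|x|\in J_1\}$ cuts $E$ transversally into $O(1)$ arcs of length $\lesssim\tau^{-1/2}$. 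I expect the main obstacle to be the near-parallel regime, $\delta_\xi\approx 0$, i.e.\ $\rho\approx|\zeta|$: there $E$ degenerates to a segment and its $\tau^{-1}$-neighborhood fattens (its normal width is $\asymp\tau^{-1}/\sin(\delta_\xi/2)$), and it is only the combination of the $\tau^{-1/2}$-wide annulus $\{|\xi|\in J_1\}$ with the genuinely thinner $\tau^{-1}$-wide thickening of the cone in the $\eta$-direction that pins $\xi$ down — which is precisely the content of the transversality estimate above. A little extra care will be needed at the degenerate locus $ab=0$, where $\xi\parallel\zeta-\xi$ and the parametrization breaks down; everything else is bookkeeping.
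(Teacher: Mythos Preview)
Your plan is sound and, with the indicated computations carried out, yields a correct proof; but it proceeds by a genuinely different route from the paper. The paper does \emph{not} fix the target point and analyze the admissible $\xi$ directly. Instead it groups pairs $(\mu,\nu)$ by the midpoint $a=(\mu+\nu)/2$ and proves two separate lemmas: Lemma~1.4 shows that for each fixed $a$ the family $\{u_{\mu,J_1}+u_{\nu,J_2}:\mu+\nu=2a\}$ has bounded overlap (after rotating so $a=0$ this is done by comparing first-coordinate projections, using the elementary estimate $|\cos\ell\tau^{-1/2}-\cos(\ell+m)\tau^{-1/2}|\asymp(\ell+m/2)m\tau^{-1}$); Lemma~1.6 shows that the unions $E_a=\bigcup_{\mu+\nu=2a}(u_{\mu,J_1}+u_{\nu,J_2})$ are pairwise disjoint once $|a-a'|$ is large, via a fairly elaborate geometric argument involving auxiliary curves $\widetilde{\mathscr E}_a(\eta',\eta'')$, rectangles $R(p_{\ell,\delta};\cdot,\cdot)$ and annuli (Lemmas~3.1--3.4). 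In your $(a,b)=(\arg\xi,\arg(\zeta-\xi))$ coordinates these two lemmas amount to bounding separately the sum and the difference of $(a,b)$. Your confocal-ellipse approach collapses both steps into a single transversality estimate between the level curves of $s+t$ and of $s$, combined with the monotonicity of $|x|$ along each half of the ellipse for the global $O(1)$-component count; this is more direct and bypasses the machinery of Section~3 entirely. The paper's route, by contrast, is more modular and makes explicit the role of the principal-curvature direction of the cone (the $P_1$-projection after rotation), which is the viewpoint advertised in the title. Your flagged obstacle---the near-parallel regime $\rho\approx|\zeta|$, where the ellipse degenerates---is the genuine delicate point, but as you observe it is harmless because the ellipse then has semi-minor axis $O(\tau^{-1/2})$ and $\arg\xi$ is already confined to an $O(\tau^{-1/2})$-window about $\arg\zeta$.
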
  
\par 
A more general result is stated in \cite[(2.4.23), p. 87]{Sogg}, 
 and a variant of it is also given (\cite[Lemma 2.4.5]{Sogg}).  
As in \cite{Sogg}, in the arguments for the unified approach to the proofs of 
Theorems A and B applying half wave operators, 
Theorem \ref{th1.1} can be used to prove an orthogonality 
result through Fourier transform estimates, which is crucial in the arguments, 
since the orthogonality result leads to an effective application of sharp
 $L^2(\Bbb R^3)$-$L^2(\Bbb R^2)$ estimates 
for the Kakeya maximal functions defined by using rays on a light cone.  
\par 
The result \cite[(2.4.23)]{Sogg} includes Theorem \ref{th1.1} as a special 
case and the proof is given in \cite[pp. 87-88]{Sogg}. 
In this note we shall give an alternative proof of Theorem \ref{th1.1}. 
Also, we shall consider a 
variant of Theorem \ref{th1.1} (Theorem \ref{th4.2} below),  
which is a special case of \cite[Lemma 2.4.5]{Sogg} and 
which is related to Theorem \ref{th1.1} as 
\cite[Lemma 2.4.5]{Sogg} is related to \cite[(2.4.23)]{Sogg}. 
We shall give the proof of Theorem \ref{th4.2} by applying 
Theorem \ref{th1.1}. 
\par  
A version of Theorem \ref{th1.1} on $\Bbb R^2$, where a cone is replaced with 
a circle, is given in \cite{Fe} and it follows from Theorem \ref{th1.1} as 
a corollary. 
\par 
\begin{remark} 
If the condition on $\mu$ in the definition of $\mathcal N$ in \eqref{e.1.2} is  replaced by $|\mu|\leq \tau^{1/2}a-1$ with some $\pi/2<a<\pi$, then 
a result analogous to Theorem \ref{th1.1} will not exist.  This can be seen by letting $J_1=J_2=:J$, $b\in J$  and considering $(\mu,\nu)$ satisfying 
$(\xi, |\xi|)\in U_{\mu,J}$ and $(-\xi, |\xi|)\in U_{\nu,J}$ for some $\xi$ 
such that $|\xi|=b$ and $\arg \xi$ is sufficiently close to $\pi/2$.  
For such $(\mu,\nu)$ we have $(0,2b)\in 
U_{\mu,J}+U_{\nu,J}$ and we note that the cardinality of the family of such 
$(\mu,\nu)$ increases with $\tau$ unlimitedly.   
\end{remark}  
Let 
\begin{equation}\label{e.1.5} 
\mathcal I=\left\{a: a=\frac{\mu+\nu}{2}, \mu, \nu\in \mathcal N \right\}.   
\end{equation} 
Set $\Bbb Z^*=\{k/2: k\in \Bbb Z\}$. Then we note that $\mathcal I$ is 
a subset of $\Bbb Z^*$ and if $a\in \mathcal I$, then 
$-(\pi/8)\tau^{1/2} +1\leq a\leq (\pi/8)\tau^{1/2} -1$.  
To prove Theorem \ref{th1.1}, we write 
\begin{equation*}  
I=\sum_{(\mu, \nu)\in \mathcal N^2} \chi_{u_{\mu,J_1}+u_{\nu,J_2}} = 
\sum_{a\in \mathcal I} \sum_{(\mu, \nu)\in k_0^{-1}(\{a\})} 
\chi_{u_{\mu,J_1}+u_{\nu,J_2}},   
\end{equation*}  
where the surjection $k_0: \mathcal N^2 \to 
\mathcal I$ is defined by 
\begin{equation*} 
k_0(\mu,\nu)=\frac{\mu+\nu}{2}.  
\end{equation*}  
Let $\mathcal I'\subset \mathcal I$ be such that if $a, a' \in \mathcal I'$ and  $a\neq a'$, then $|a-a'|\geq C_0$, where $C_0$ is sufficiently large. 
To prove $I\leq C$, it suffices to show that 
\begin{equation}  \label{e.1.6}
\sum_{a\in \mathcal I'} \sum_{(\mu, \nu)\in k_0^{-1}(\{a\})} 
\chi_{u_{\mu,J_1}+ u_{\nu,J_2}} \leq C',  
\end{equation}  
by considering a suitable partition of $\mathcal I$. 
\par 
In the proof of \eqref{e.1.6}, one result we apply is the following. 
\begin{lemma} \label{lem1.2}
Fix $a \in \mathcal I$. Let $\frac{\mu+\nu}{2}=a$, $\mu, \nu\in \mathcal N$
 and 
\begin{equation*}  
E_a(\mu,\nu)=u_{\mu,J_1}+ u_{\nu,J_2}.  
\end{equation*}  
Then $\{E_a(\mu,\nu)\}_{(\mu, \nu)\in k_0^{-1}(\{a\})}$ is finitely 
overlapping uniformly in $a\in \mathcal I$.    
\end{lemma}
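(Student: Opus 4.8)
The plan is to analyze, for a fixed $a\in\mathcal I$, the family of sets $E_a(\mu,\nu)=u_{\mu,J_1}+u_{\nu,J_2}$ as $(\mu,\nu)$ ranges over $k_0^{-1}(\{a\})$, that is, over all pairs with $\mu+\nu=2a$. The key observation is that fixing $a$ fixes the ``sum direction'': writing $\mu=a+s$, $\nu=a-s$ for $s$ in an appropriate index set, the angular sector $\Gamma_\mu$ points near angle $a\tau^{-1/2}+s\tau^{-1/2}$ and $\Gamma_\nu$ near $a\tau^{-1/2}-s\tau^{-1/2}$, so both sectors are roughly symmetric about the fixed angle $a\tau^{-1/2}$, and a sum $\xi+\xi'$ with $\xi\in\Gamma_\mu$, $|\xi|\in J_1$, $\xi'\in\Gamma_\nu$, $|\xi'|\in J_2$ will lie near the ray of angle $a\tau^{-1/2}$. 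First I would set up coordinates adapted to this fixed direction: rotate so that $a\tau^{-1/2}=0$, and write a point of $\Gamma_\mu$ as $r(\cos\phi,\sin\phi)$ with $\phi\in[s\tau^{-1/2},(s+1)\tau^{-1/2})$ (and similarly for $\Gamma_\nu$ with $\phi\in[-s\tau^{-1/2}\text{-ish range}]$), using that $|s|\le\tau^{1/2}(\pi/8)$ so all angles stay in $(-\pi/4,\pi/4)$ where $\cos$ is bounded below and the geometry is tame.

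Next I would estimate the size and location of $E_a(\mu,\nu)$ in these coordinates. In the first coordinate (along the fixed direction) the sum $\xi_1+\xi_1'$ ranges over an interval of length $O(|J_1|+|J_2|)=O(\tau^{-1/2})$ (since $\cos\phi\approx1$ up to $O(\tau^{-1})$ over the relevant angle range), essentially independent of $s$. In the second (transverse) coordinate, $\xi_2+\xi_2'\approx r\phi+r'\phi'$ where $\phi\approx s\tau^{-1/2}$ and $\phi'\approx -s\tau^{-1/2}$, so the leading terms nearly cancel and what remains is controlled by $r$ versus $r'$ (both in $[1,2]$, so their difference is $O(\tau^{-1/2})$ if $J_1,J_2$ are close, but in general just $O(1)$) together with the $\tau^{-1/2}$-width of each sector — the point being that $E_a(\mu,\nu)$ is contained in a box of transverse width $O(\tau^{-1/2}\cdot 1)=O(\tau^{-1/2})$ whose transverse \emph{center} is $\approx s(r-r')\tau^{-1/2}$, hence varies with $s$. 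Finally, the third coordinate $\eta$: since $(\xi,\eta)\in u_{\mu,J_1}$ forces $|\eta-|\xi||\le\tau^{-1}$, and likewise for the second summand, $\eta+\eta'$ lies within $2\tau^{-1}$ of $|\xi|+|\xi'|$, which is itself $\sqrt{r^2+r^2\phi^2+\cdots}+\cdots$; this must be compared with the first coordinate $\xi_1+\xi_1'$, and the $O(\tau^{-1})$ slab thickness in $\eta$ against the $O(\tau^{-1/2})$ angular spread is exactly what makes the overlap finite — a second-order $(\text{angle})^2\sim\tau^{-1}$ term in $|\xi|-\xi_1$ matches the slab width. So once the first two coordinates of the point $(\xi+\xi',\eta+\eta')$ are fixed (to within $O(\tau^{-1/2})$), the value of $s$ is pinned down to within $O(1)$ by the transverse-center formula, and the $\eta$-coordinate is then determined to within $O(\tau^{-1})$; hence only boundedly many $(\mu,\nu)=(a+s,a-s)$ can have $E_a(\mu,\nu)$ covering a given point.

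The steps in order: (1) rotate to put $a\tau^{-1/2}$ at angle $0$ and parametrize $\Gamma_{a\pm s}$; (2) compute, to the needed order in $\tau^{-1/2}$, the range of $\xi_1+\xi_1'$, of $\xi_2+\xi_2'$, and of $|\xi|+|\xi'|$ over the relevant $r,r',\phi,\phi'$, obtaining that $E_a(\mu,\nu)$ sits in an explicit box $B_s$ of dimensions $O(\tau^{-1/2})\times O(\tau^{-1/2})\times O(\tau^{-1})$ with center depending affinely (to leading order) on $s$; (3) show the centers $B_s$ in the transverse variable move by $\gtrsim$ a fixed multiple of $\tau^{-1/2}$ per unit change in $s$ \emph{or}, when $r\approx r'$ makes the transverse motion degenerate, fall back on the $\eta$-coordinate (where the second-order term in $|\xi|$ separates consecutive $s$'s); (4) conclude the bounded overlap, uniformly in $a$, by counting how many boxes $B_s$ can contain a common point. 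The main obstacle I anticipate is the degenerate case in step (3): when $J_1$ and $J_2$ essentially coincide the transverse centers barely move with $s$, and one must extract the separation from the curvature term in $|\xi|+|\xi'|$ versus $\xi_1+\xi_1'$, carefully tracking the $\phi^2$-order terms against the $\tau^{-1}$ slab thickness and against the $\tau^{-1}$ error already committed in the linear approximations — this is precisely where the hypothesis $|J_i|\le\tau^{-1/2}$ (rather than merely $\le$ some constant) is used, and getting the constants to line up so the conclusion is genuinely $\tau$-independent is the delicate part.
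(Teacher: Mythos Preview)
Your proposal has the right ingredients but misorganizes them and contains a concrete error.

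The error: you assert $E_a(\mu,\nu)$ sits in a box of dimensions $O(\tau^{-1/2})\times O(\tau^{-1/2})\times O(\tau^{-1})$. The third dimension is wrong. The $\eta$-range of $u_{\mu,J_1}$ is $\widetilde{J_1}$, of length $|J_1|+2\tau^{-1}$, which can be $\tau^{-1/2}+2\tau^{-1}$; so the $\eta$-range of the sum set has length up to $2\tau^{-1/2}+4\tau^{-1}$, not $O(\tau^{-1})$. What \emph{is} $O((s+1)\tau^{-1})$ is the combination $(\eta_1+\eta_2)-(\xi_1+\xi_1')$, and that is where the curvature term lives.

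The organizational problem: your primary separation mechanism --- the transverse center $\approx s(r-r')\tau^{-1/2}$ --- fails outright when $J_1=J_2$, which is an allowed (indeed central) case. What you call the ``fallback'' via the $\phi^2$ term in $|\xi|-\xi_1$ is not a degenerate rescue; it is the only mechanism that works uniformly, and it must be the main argument.

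The paper's proof is exactly your fallback, made clean by slicing first. One reduces to the $\eta$-slice (Lemma~1.4): if $(\xi,\eta')\in u_{\mu,J_1}$, $(\xi',\eta'')\in u_{\nu,J_2}$ with $\eta'+\eta''=\eta$ fixed, then $|\xi|$ is pinned to $\eta'$ within $\tau^{-1}$ and $|\xi'|$ to $\eta''$ within $\tau^{-1}$. After rotating so $a=0$ and writing $\mu=s$, $\nu=-s$, project onto the first coordinate only: $P_1(u^{\eta'}_{s,J_1}+u^{\eta''}_{-s,J_2})$ is an interval of length $\le 12(s+1)\tau^{-1}$ (not $O(\tau^{-1/2})$, precisely because $|\xi|,|\xi'|$ are now tightly constrained) centered near $\eta\cos(s\tau^{-1/2})$. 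Since
\[
\eta\cos(\ell\tau^{-1/2})-\eta\cos((\ell+m)\tau^{-1/2})\;\asymp\;(2\ell+m)\,m\,\tau^{-1},
\]
overlap of the intervals for $s=\ell$ and $s=\ell+m$ forces the quadratic inequality $(2\ell+m)m\le C(\ell+m+1)$, hence $m\le C$. The transverse coordinate is never used. Your step~(3) should be replaced by this single first-coordinate computation.
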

\par 
Let 
\begin{equation*}\label{}  
\widetilde{J_1}= [\alpha_1-\tau^{-1}, \beta_1+\tau^{-1}], \quad 
\widetilde{J_2}= [\alpha_2-\tau^{-1}, \beta_2+\tau^{-1}]. 
\end{equation*}
Let $\eta\in \widetilde{J_1} +\widetilde{J_2}= 
 [\alpha_1+\alpha_2-2\tau^{-1}, \beta_1+\beta_2+2\tau^{-1}]$ 
and define 
\begin{equation*}\label{}  
E^\eta_a(\mu,\nu)=\left\{\xi \in \Bbb R^2: (\xi,\eta) \in E_a(\mu,\nu)
\right\}, \quad (\mu, \nu)\in k_0^{-1}(\{a\}). 
\end{equation*}
Then the following result implies Lemma \ref{lem1.2}. 
\begin{lemma} \label{lem1.3}
Fix $a \in \mathcal I$. Let $\eta\in \widetilde{J_1} +\widetilde{J_2}$. 
Then $\{E_a^\eta(\mu,\nu)\}_{(\mu, \nu)\in k_0^{-1}(\{a\})}$ is finitely 
overlapping uniformly in $a$ and $\eta$. 
\end{lemma}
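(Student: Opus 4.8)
\textbf{Proof proposal for Lemma \ref{lem1.3}.}

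The plan is to analyze the slice $E_a^\eta(\mu,\nu)$ explicitly and show it is contained in a small parallelogram (or curved quadrilateral) whose location in $\Bbb R^2$ is controlled by the single parameter $\mu$ (equivalently $\nu=2a-\mu$, since $a$ is fixed), so that overlap of these slices for distinct values of $\mu$ is bounded independently of $a$ and $\eta$. First I would recall that a point $\xi\in E_a^\eta(\mu,\nu)$ means $\xi=\zeta+\zeta'$ with $(\zeta,\eta_1)\in u_{\mu,J_1}$, $(\zeta',\eta_2)\in u_{\nu,J_2}$, $\eta_1+\eta_2=\eta$; so $|\eta_i-|\zeta_i||\le\tau^{-1}$, $\arg\zeta\in[\mu\tau^{-1/2},(\mu+1)\tau^{-1/2})$, $\arg\zeta'\in[\nu\tau^{-1/2},(\nu+1)\tau^{-1/2})$, and $|\zeta|,|\zeta'|$ lie in intervals of length $\le\tau^{-1/2}$ inside $[1,2]$. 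Since $|\mu|,|\nu|\le(\pi/8)\tau^{1/2}$, all the angles involved lie in $(-\pi/8,\pi/8)$, which is the key restriction that makes the lemma true (cf. the Remark). The first substantive step is therefore to fix good polar-type coordinates: write $\xi=(r\cos\phi,r\sin\phi)$ and express $r,\phi$ in terms of $|\zeta|,|\zeta'|$ and $\arg\zeta,\arg\zeta'$.

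The heart of the argument is a two-sided estimate on the angular coordinate $\phi=\arg\xi$. Since $\arg\zeta$ and $\arg\zeta'$ each vary by at most $\tau^{-1/2}$ and $|\zeta|,|\zeta'|\in[1,2]$, the vector-addition formula gives that $\arg(\zeta+\zeta')$ lies in an interval of length $O(\tau^{-1/2})$ centered at a value determined by $\mu+\nu=2a$ and by the ratio $|\zeta'|/|\zeta|$; but here is the subtlety — that ratio is \emph{not} pinned down, since $J_1,J_2$ can be anywhere in $[1,2]$. So I would instead control the radial coordinate. The constraint $|\eta_i-|\zeta_i||\le\tau^{-1}$ forces $|\zeta|$ to lie within $\tau^{-1}$ of $\eta_1$ and $|\zeta'|$ within $\tau^{-1}$ of $\eta_2$; combined with $\eta_1+\eta_2=\eta$ this shows $|\zeta|+|\zeta'|\in[\eta-2\tau^{-1},\eta+2\tau^{-1}]$, an interval of length $4\tau^{-1}$ depending only on $\eta$. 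Thus, writing $\xi=\zeta+\zeta'$ with the angle between $\zeta,\zeta'$ at most $(\pi/4)$ (so $\cos$ of it bounded below by a positive constant), one gets $|\xi|=|\zeta|+|\zeta'|+O(\text{angle}^2)$, i.e. $|\xi|$ is confined to an interval of length $O(\tau^{-1/2})$ whose \emph{center} is essentially $\eta$, independent of $\mu$. Meanwhile $\phi=\arg\xi$ is confined, for fixed $\mu$ (hence fixed $\nu$), to an interval of length $O(\tau^{-1/2})$; the remaining task is to show the \emph{center} of that $\phi$-interval moves by at least $c\tau^{-1/2}$ when $\mu$ increases by $1$, with $c>0$ absolute, so that at most $O(1)$ distinct values of $\mu$ can produce overlapping slices at a given $\xi$.

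For that last point I would use a monotonicity/Lipschitz argument: with $|\zeta|,|\zeta'|$ both in $[1,2]$ and all angles in $(-\pi/8,\pi/8)$, the map $(\arg\zeta,\arg\zeta')\mapsto\arg(\zeta+\zeta')$ has partial derivatives bounded above and below by positive absolute constants (this is where the angle bound $\pi/8<\pi/4$ is used — it keeps $\zeta+\zeta'$ away from $0$ and keeps the Jacobian nondegenerate). Hence replacing $(\mu,\nu)$ by $(\mu+j,\nu-j)$ shifts the admissible $\phi$-window by an amount comparable to $j\tau^{-1/2}$ in a fixed direction, so windows for $|j|$ large are disjoint. Counting the $\mu$ whose $\phi$-window meets a given $\phi$, and within each the $\mu$ whose $|\xi|$-window meets a given $|\xi|$, yields the uniform bound. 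I expect the main obstacle to be the careful bookkeeping of the $O(\tau^{-1/2})$ and $O(\tau^{-1})$ error terms — verifying that the constants in the angular derivative bounds are genuinely independent of $\tau$, $a$, $\eta$, and the intervals $J_1,J_2$, and in particular that the quadratic correction to $|\zeta+\zeta'|$ really is of lower order than $\tau^{-1/2}$; once the geometry is set up in these coordinates the counting itself is routine. Finally, Lemma \ref{lem1.3} immediately gives Lemma \ref{lem1.2} by integrating (the overlap of the sets $E_a(\mu,\nu)$ at a point $(\xi,\eta)$ is just the overlap of the slices $E_a^\eta(\mu,\nu)$ at $\xi$).
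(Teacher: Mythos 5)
Your reduction of the problem is fine (slicing, fixing $a$, parametrizing $\xi=\zeta+\zeta'$ with $|\zeta|+|\zeta'|$ pinned to within $2\tau^{-1}$ of $\eta$), but the key separation mechanism you propose is wrong, and the one you discard is the one that actually works. You claim that replacing $(\mu,\nu)$ by $(\mu+j,\nu-j)$ shifts the admissible window for $\phi=\arg(\zeta+\zeta')$ by an amount comparable to $j\tau^{-1/2}$, because the partials of $(\arg\zeta,\arg\zeta')\mapsto\arg(\zeta+\zeta')$ are bounded above and below. Each partial is indeed comparable to $1$, but the relevant directional derivative is their \emph{difference}: writing $r_1=|\zeta|$, $r_2=|\zeta'|$, the shift under $(\arg\zeta,\arg\zeta')\mapsto(\arg\zeta+t,\arg\zeta'-t)$ is approximately $\frac{r_1-r_2}{r_1+r_2}\,t$, which vanishes when $r_1=r_2$. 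This is not a borderline degenerate case but the generic core of the lemma: take $a=0$, $J_1=J_2$, $\eta'=\eta''$; then for every $\ell$ the set $u^{\eta'}_{\ell,J_1}+u^{\eta''}_{-\ell,J_2}$ contains the point $(\eta\cos(\ell\tau^{-1/2}),0)$ on the bisector ray $\arg\xi=0$, so all the angular windows overlap and no counting in $\phi$ is possible. Relatedly, your earlier assertion that $|\xi|=|\zeta|+|\zeta'|+O(\mathrm{angle}^2)$ puts $|\xi|$ in an interval of length $O(\tau^{-1/2})$ centered essentially at $\eta$ ``independent of $\mu$'' is also incorrect: the angle between $\zeta$ and $\zeta'$ is $\approx|\mu-\nu|\tau^{-1/2}$, which can be of order $1$, so the $\mathrm{angle}^2$ correction is not an error term but a genuinely $\mu$-dependent shift of size up to $O(1)$ --- and that dependence is precisely where the separation lives.

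The paper's proof exploits exactly this: after rotating so that $a=0$, it shows (using Lemma \ref{lem2.1}) that the first coordinate of every point of $u^{\eta'}_{\mu,J_1}+u^{\eta''}_{\nu,J_2}$, with $\mu=\ell+m$, $\nu=-(\ell+m)$, lies within $O\bigl((\ell+m+1)\tau^{-1}\bigr)$ of $\eta\cos\bigl((\ell+m)\tau^{-1/2}\bigr)$ (this is \eqref{e.2.3}; note the widths grow linearly in $\ell+m$, another point your sketch does not account for), while by \eqref{e.2.2} the centers for parameters $\ell$ and $\ell+m$ are separated by at least $c(2\ell+m)m\tau^{-1}$. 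The quadratic separation coming from the curvature of the circle beats the linearly growing widths, and intersection forces $m\le C$. So to repair your argument you would have to replace the angular counting by exactly this radial (or bisector-coordinate) curvature estimate, keeping track of the fact that both the window widths and the gaps depend on $\ell$; as written, the proposal fails at the case $|\zeta|=|\zeta'|$ and therefore does not prove the lemma.
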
  
To prove Lemma \ref{lem1.3}, we observe that 
\begin{equation*}\label{}  
E^\eta_a(\mu,\nu)=\bigcup\left\{u_{\mu,J_1}^{\eta'}+u_{\nu,J_2}^{\eta''}: 
\eta'+\eta''=\eta, \eta'\in \widetilde{J_1},  
\eta''\in \widetilde{J_2}\right\}, 
\end{equation*}
where $u_{\mu,J_1}^{\eta'}$ and $u_{\nu,J_2}^{\eta''}$ are defined from 
$u_{\mu,J_1}$ and $u_{\nu,J_2}$ as 
\begin{equation*} 
u_{\mu,J_1}^{\eta'}= \left\{\xi \in \Bbb R^2: (\xi,\eta') 
\in u_{\mu,J_1}\right\}, 
\quad u_{\nu,J_2}^{\eta''}= 
\left\{\xi \in \Bbb R^2: (\xi,\eta'') \in u_{\nu,J_2}\right\}. 
\end{equation*}
Thus Lemma \ref{lem1.3} is restated as follows. 
\begin{lemma}\label{lem1.4} Let  $a \in \mathcal I$ and 
$\eta\in \widetilde{J_1} +\widetilde{J_2}$. The family of the sets 
\begin{equation*} 
\left\{ \bigcup_{\substack{\eta'+\eta''=\eta, \\ 
\eta'\in \widetilde{J_1}, 
\eta''\in \widetilde{J_2}}}\left(u_{\mu,J_1}^{\eta'}+u_{\nu,J_2}^{\eta''} 
\right)\right\}_{(\mu, \nu)\in k_0^{-1}(\{a\})} 
\end{equation*} 
is finitely overlapping uniformly in $a \in \mathcal I$ and 
$\eta\in \widetilde{J_1} +\widetilde{J_2}$. 
\end{lemma}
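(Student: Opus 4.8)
The plan is to bound the overlap \emph{pointwise}: I will show that for every $\zeta\in\Bbb R^2$ the number of $(\mu,\nu)\in k_0^{-1}(\{a\})$ whose set contains $\zeta$ is bounded by an absolute constant. Write $s=(\mu-\nu)/2$, so that $(\mu,\nu)\mapsto s$ is a bijection from $k_0^{-1}(\{a\})$ onto a set of half-integers with $|s|\le(\pi/8)\tau^{1/2}$, and denote by $F_s$ the set indexed by $(\mu,\nu)=(a+s,a-s)$ in Lemma~\ref{lem1.4}. Unwinding \eqref{e.1.3}--\eqref{e.1.4}, one checks first that $(v_1,\eta')\in u_{\mu,J_1}$, $(v_2,\eta'')\in u_{\nu,J_2}$ for some $\eta',\eta''$ with $\eta'+\eta''=\eta$ holds exactly when $v_1\in\Gamma_{a+s}$, $|v_1|\in J_1$, $v_2\in\Gamma_{a-s}$, $|v_2|\in J_2$ and $\bigl||v_1|+|v_2|-\eta\bigr|\le2\tau^{-1}$ (the constraints $\eta'\in\widetilde{J_1}$, $\eta''\in\widetilde{J_2}$ being then automatic), so that
\begin{equation*}
F_s=\bigl\{v_1+v_2:\ v_1\in\Gamma_{a+s},\ |v_1|\in J_1,\ v_2\in\Gamma_{a-s},\ |v_2|\in J_2,\ \bigl||v_1|+|v_2|-\eta\bigr|\le2\tau^{-1}\bigr\}.
\end{equation*}
The goal is then to exhibit for each $s$ a set $A_s\supseteq F_s$ described \emph{only through $|\zeta|$} and to show that $\{A_s\}_s$ is finitely overlapping.

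The geometric input is the law of cosines. For $\zeta=v_1+v_2\in F_s$ put $r_i=|v_i|$ and let $\theta\in[0,\pi)$ be the angle between $v_1$ and $v_2$; since $v_1\in\Gamma_{a+s}$, $v_2\in\Gamma_{a-s}$ we have $\theta\in[\,(2|s|-1)_+\tau^{-1/2},(2|s|+1)\tau^{-1/2}\,]$, and $|s|\le(\pi/8)\tau^{1/2}$ keeps $\theta<\pi/4+\tau^{-1/2}$. With $\rho=(r_1-r_2)/(r_1+r_2)$ one has $|\zeta|^2=(r_1+r_2)^2\bigl(1-(1-\rho^2)\sin^2(\theta/2)\bigr)$. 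Now $r_1+r_2=\eta+O(\tau^{-1})$ from the defining constraint — crucially $O(\tau^{-1})$ and not $O(\tau^{-1/2})$ — while $r_1\in J_1\subseteq[1,2]$ together with $r_2=\eta-r_1+O(\tau^{-1})$ forces $\rho=\rho_0+O(\tau^{-1/2})$, where $\rho_0=(2m_1-\eta)/\eta$ and $m_1$ is the midpoint of $J_1$; moreover, whenever some $F_s$ is nonempty one has $|\rho_0|\le\frac12+O(\tau^{-1/2})$, so $1-\rho_0^2$ is bounded below by an absolute constant. Since the radicand stays above $3/4$ (because $\sin^2(\theta/2)<1/4$), expanding the square root and collecting errors yields
\begin{equation*}
\eta-|\zeta|=H(\theta)+O\bigl((|s|+1)\tau^{-1}\bigr),\qquad H(\phi):=\eta\Bigl(1-\sqrt{1-(1-\rho_0^2)\sin^2(\phi/2)}\Bigr),
\end{equation*}
where on the relevant range $H$ is increasing with $H'(\phi)\asymp\phi$, hence $H(\phi)\asymp\phi^2$. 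Consequently $F_s\subseteq A_s:=\{\zeta:\ \eta-|\zeta|\in\widehat I_{|s|}\}$, with $\widehat I_n$ the interval $[\,H((2n-1)_+\tau^{-1/2}),\,H((2n+1)\tau^{-1/2})\,]$ enlarged by $O(n\tau^{-1})$ on each side; for $n\ge1$ one has $|\widehat I_n|\asymp n\tau^{-1}$, and the unenlarged ``core'' intervals are pairwise adjacent with disjoint interiors, the right endpoint of the $n$-th being the left endpoint of the $(n+1)$-st.

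What remains is purely one-dimensional: to see that $\{\widehat I_n\}_n$ is finitely overlapping on $\Bbb R$. The cores tile an interval, their lengths grow like $n\tau^{-1}$ while the enlargements are $O(n\tau^{-1})$, so a short case analysis (splitting $m\le3n$ from $m>3n$ and using $H(\phi)\asymp\phi^2$ together with $n\le(\pi/8)\tau^{1/2}$) shows that $\widehat I_n\cap\widehat I_m\neq\emptyset$ forces $|n-m|\le C$; hence $\#\{n:\ P\in\widehat I_n\}\le C$ for every $P\in\Bbb R$. Since each $s$ contributes only at $n=|s|$ (two signs, plus $s=0$), we obtain $\#\{s:\ \zeta\in F_s\}\le C$ for every $\zeta$, uniformly in $a$ and $\eta$ — and in $\tau,J_1,J_2$ — which is the assertion of Lemma~\ref{lem1.4}. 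I expect the one genuinely delicate point to be exactly this bookkeeping: one must keep the errors folded into $\widehat I_n$, above all the $O(\tau^{-1})$ coming from $r_1+r_2=\eta+O(\tau^{-1})$ (which would wreck the argument were it only $O(\tau^{-1/2})$), dominated by the core lengths $\asymp n\tau^{-1}$, and one must make sure the constant $1-\rho_0^2$ driving $H$ does not degenerate; both rely on the normalisations $J_i\subseteq[1,2]$ and $|s|\le(\pi/8)\tau^{1/2}$ built into the hypotheses.
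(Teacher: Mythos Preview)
Your argument is correct and reaches the conclusion of Lemma~\ref{lem1.4}, but by a route genuinely different from the paper's. The paper first rotates by $\mathcal R_{-a\tau^{-1/2}}$ to reduce to $a=0$, then projects onto the \emph{linear} coordinate $P_1(\xi)=\xi_1$: writing $\mu=\ell+m$, $\nu=-(\ell+m)$ (so $s=\ell+m$ in your notation) it shows $\diam P_1(u_{\mu,J_1}^{\eta'}+u_{\nu,J_2}^{\eta''})\le 12(\ell+m+1)\tau^{-1}$ while the $P_1$-centres sit at $\eta\cos((\ell+m)\tau^{-1/2})$; the elementary bound $\cos\ell\tau^{-1/2}-\cos(\ell+m)\tau^{-1/2}\gtrsim(2\ell+m)m\tau^{-1}$ then yields a quadratic inequality in $m$ forcing $m\le C$. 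You instead use the rotation-invariant \emph{radial} coordinate $|\zeta|$: the law of cosines together with the key constraint $|r_1+r_2-\eta|\le 2\tau^{-1}$ gives $\eta-|\zeta|=H(\theta)+O((|s|+1)\tau^{-1})$ with $H(\phi)\asymp\phi^2$, and the same ``position $\asymp n^2\tau^{-1}$, width $\asymp n\tau^{-1}$'' one-dimensional overlap analysis finishes the job. Your route handles all $a$ simultaneously and packages the geometry neatly via $H$, at the cost of tracking the auxiliary parameter $\rho_0$ and checking $1-\rho_0^2$ stays bounded below (which it does, since $r_1,r_2\in[1,2]$ forces $|\rho|\le 1/3$); the paper's linear projection is more hands-on and uses only $\cos$, but requires the preliminary rotation. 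The underlying mechanism---the nonvanishing transverse curvature of the cone slice---is the same in both.
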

\par 
To prove \eqref{e.1.6} another result we need is the following.  
\begin{lemma}\label{lem1.5} 
For  $a \in \mathcal I$,  let 
\begin{equation*} 
E_a=\bigcup_{(\mu, \nu)\in k_0^{-1}(\{a\})} E_a(\mu,\nu). 
\end{equation*} 
Then there exists $C_0>0$ such that if $|a-a'|\geq C_0$, 
$a, a'\in \mathcal I$, then $E_a\cap E_{a'}=\emptyset$.    
\end{lemma}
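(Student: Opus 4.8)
The plan is to show that every point of $E_a$ records, through its last coordinate and the approximate direction of its first two coordinates, enough information to recover $a$ up to a bounded error, so that $E_a$ and $E_{a'}$ cannot meet once $|a-a'|$ exceeds an absolute constant $C_0$. Concretely, suppose $(\xi,\eta)\in E_a(\mu,\nu)$ with $\tfrac{\mu+\nu}{2}=a$. By definition of $u_{\mu,J_1}+u_{\nu,J_2}$ we may write $\xi=\zeta+\zeta'$ with $\zeta\in\Gamma_\mu$, $\zeta'\in\Gamma_\nu$, $|\zeta|\in J_1$, $|\zeta|'\in J_2$ (up to the $\tau^{-1}$-thickening, which only perturbs $\eta$, not the $\Gamma_\mu,\Gamma_\nu$ constraints on $\zeta,\zeta'$), and $|\eta-(|\zeta|+|\zeta'|)|\le 2\tau^{-1}$. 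First I would argue that since $\arg\zeta$ and $\arg\zeta'$ each lie in an arc of length $\tau^{-1/2}$ centered essentially at $\mu\tau^{-1/2}$ and $\nu\tau^{-1/2}$, and since the ``opening angle'' between the two directions is at most $(\pi/8)\cdot 2 + O(\tau^{-1/2})<\pi/2$ (using $|\mu|,|\nu|\le(\pi/8)\tau^{1/2}-1$), the sum $\xi=\zeta+\zeta'$ has $|\xi|$ comparable to $|\zeta|+|\zeta'|\sim 2$ — no cancellation — and, more importantly, $\arg\xi$ is within $O(\tau^{-1/2})$ of $\tfrac{\mu+\nu}{2}\cdot\tau^{-1/2}=a\tau^{-1/2}$. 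The elementary fact here is that the argument of a sum of two vectors in a common half-plane lies between their arguments; I would make this quantitative with a parallelogram/law-of-sines estimate, using that $|\zeta|\asymp|\zeta'|\asymp 1$ so neither summand dominates.

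Granting that, the map $(\xi,\eta)\mapsto \arg\xi$ sends $E_a$ into an interval of length $O(\tau^{-1/2})$ centered at $a\tau^{-1/2}$; say $E_a$ is contained in the ``wedge'' $\{\xi: |\arg\xi - a\tau^{-1/2}|\le c_1\tau^{-1/2}\}$ for an absolute constant $c_1$. Then if $a,a'\in\mathcal I\subset\Bbb Z^\ast$ satisfy $|a-a'|\ge C_0$ with $C_0>2c_1$, the corresponding wedges are disjoint, hence $E_a\cap E_{a'}=\emptyset$. This is the whole argument; the only real content is the angular estimate in the previous paragraph.

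The step I expect to be the main obstacle is making the claim ``$\arg(\zeta+\zeta')$ is within $O(\tau^{-1/2})$ of $a\tau^{-1/2}$'' both correct and clean, because $\arg\zeta$ is only pinned to $\mu\tau^{-1/2}$ within an error of $\tau^{-1/2}$ (the width of $\Gamma_\mu$), not exactly, so the bound on $|\arg(\zeta+\zeta') - a\tau^{-1/2}|$ must absorb these two half-width errors plus whatever the vector addition contributes. I would handle this by the estimate that for unit-comparable vectors $\zeta,\zeta'$, $\arg(\zeta+\zeta')$ lies in the closed interval $[\min(\arg\zeta,\arg\zeta'),\max(\arg\zeta,\arg\zeta')]$ — valid as long as the angle between them is $<\pi$, which holds here — so $\arg(\zeta+\zeta')$ is trapped between $\mu\tau^{-1/2}$-ish and $\nu\tau^{-1/2}$-ish values and is therefore within $\tfrac12|\mu-\nu|\tau^{-1/2}+\tau^{-1/2}$ of the midpoint $a\tau^{-1/2}$. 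But $\tfrac12|\mu-\nu|\tau^{-1/2}$ can be as large as $(\pi/8)\tau^{1/2}\cdot\tau^{-1/2}=\pi/8$, which is \emph{not} $O(\tau^{-1/2})$. So the naive argument fails, and the genuine difficulty is that one \emph{cannot} localize $\arg\xi$ near $a\tau^{-1/2}$ from the angular data alone. The fix, which I would pursue, is to bring in the last coordinate: the constraint $\eta\approx|\zeta|+|\zeta'|$ together with $|\xi|=|\zeta+\zeta'|$ and $|\zeta|,|\zeta'|\in[1,2]$ with $|J_i|\le\tau^{-1/2}$ forces, via the law of cosines, that the angle between $\zeta$ and $\zeta'$ is itself determined up to $O(\tau^{-1/2})$ by the pair $(|\xi|,\eta)$ — indeed $|\xi|^2 = |\zeta|^2+|\zeta'|^2+2|\zeta||\zeta'|\cos\theta$ with $\theta=\arg\zeta'-\arg\zeta$, and $|\zeta|^2+|\zeta'|^2$ and $|\zeta||\zeta'|$ are each pinned within $O(\tau^{-1/2})$ once $|\zeta|+|\zeta'|=\eta+O(\tau^{-1})$. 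Hence $\cos\theta$, and so $|\mu-\nu|\tau^{-1/2}$, is determined up to $O(\tau^{-1/2})$ by $(\xi,\eta)\in E_a$; combined with $\arg(\zeta+\zeta')$ being trapped between $\arg\zeta$ and $\arg\zeta'$, this pins $\arg\xi$ to within $O(\tau^{-1/2})$ of $a\tau^{-1/2}$ after all. I would carry this out by first recording the two identities $\arg\xi\in[\arg\zeta,\arg\zeta']$ and $|\xi|^2=|\zeta|^2+|\zeta'|^2+2|\zeta||\zeta'|\cos(\arg\zeta'-\arg\zeta)$, then extracting from the second that $\arg\zeta'-\arg\zeta = 2\arccos(\cdots)$ equals $(\nu-\mu)\tau^{-1/2}+O(\tau^{-1/2})$ uniformly (the $\arccos$ derivative is bounded because $\theta<\pi/2$ keeps us away from the endpoints where $\arccos$ is singular), and finally combining to get the wedge containment, from which disjointness for $|a-a'|\ge C_0$ is immediate.
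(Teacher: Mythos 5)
The decisive step in your plan --- that every $(\xi,\eta)\in E_a$ has $|\arg\xi-a\tau^{-1/2}|\le c_1\tau^{-1/2}$, so that $E_a$ lies in a thin wedge about the direction $a\tau^{-1/2}$ --- is false unless $J_1$ and $J_2$ are essentially equal, and the law-of-cosines repair does not rescue it. Write $\xi=\zeta+\zeta'$ with $|\zeta|=r\in J_1$, $|\zeta'|=r'\in J_2$, $\arg\zeta=\alpha$, $\arg\zeta'=\beta$, and set $\phi=(\alpha+\beta)/2$ (so $\phi=a\tau^{-1/2}+O(\tau^{-1/2})$) and $\theta=\beta-\alpha$. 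Then $\zeta+\zeta'=e^{i\phi}\big((r+r')\cos(\theta/2)+i(r'-r)\sin(\theta/2)\big)$, hence $\arg\xi-\phi=\arctan\big(\tfrac{r'-r}{r+r'}\tan\tfrac{\theta}{2}\big)$. If, say, $J_1\ni 1$, $J_2\ni 2$ and $|\theta|$ is comparable to $\pi/4$ (allowed, since only $|\alpha|,|\beta|\le\pi/8$ is imposed), this offset is about $0.14$, a fixed constant, not $O(\tau^{-1/2})$. Your fix does determine $r,r'$ (they lie in the given short intervals) and $|\theta|$ from $(|\xi|,\eta)$ up to small errors, but that only tells you the \emph{size} of the offset; it does not place $\arg\xi$ near the midpoint $\phi$, so the conclusion ``this pins $\arg\xi$ to within $O(\tau^{-1/2})$ of $a\tau^{-1/2}$'' does not follow. (A lesser point: the claim that the $\arccos$ derivative is bounded because $\theta<\pi/2$ is also wrong --- $\arccos$ is singular at $1$, i.e.\ at $\theta=0$.)

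There is a further obstruction to any argument that tries to read $a$ off from the point $(\xi,\eta)$ alone: reflecting $\zeta,\zeta'$ across the line $\Bbb R\xi$ gives a second admissible decomposition with the same lengths $r\in J_1$, $r'\in J_2$ and the same $\eta$, whose mean angle differs from $\phi$ by $2\arctan\big(\tfrac{r'-r}{r+r'}\tan\tfrac{\theta}{2}\big)$; when $J_1$ and $J_2$ are far apart in $[1,2]$ this is of unit size, so the same $(\xi,\eta)$ occurs in $E_a(\mu,\nu)$ and $E_{a'}(\mu',\nu')$ with $|a-a'|$ comparable to $\tau^{1/2}$ (take, e.g., $r=1$, $r'=2$, $\alpha\approx0.201$, $\beta=-0.1$, $\eta=3$, and the mirrored pair). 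Consequently the disjointness can only be established after fixing the ordering of the angles, and this is in effect what the paper does: in \eqref{e.3.1+} and in the definition of $\widetilde{E}^\eta_{0}$ only the configurations $\mu=a+\ell$, $\nu=a-\ell$ with $\ell\in\Bbb N_0^*$ appear, the opposite ordering being symmetric. Moreover, in place of your wedge, the correct container for the rotated slice $\mathcal R_{-a\delta^{1/2}}E_a^\eta$ (with this ordering) is a union of thin rectangles centered at the points $p_{\ell,\delta}(\eta',\eta'')$ on the elliptical arcs $\mathscr E(\eta',\eta'')$, which sweep out an angular sector of unit size --- exactly the reason the wedge picture fails --- and the separation for $|a-a'|\ge C_0$ is obtained not from angular localization but from the annulus and rectangle estimates of Lemmas \ref{lem3.1}--\ref{lem3.3+} together with the separating rotated curve $\widetilde{\mathscr E}_{b/2}(\eta',\eta'')$ in Lemma \ref{lem3.4}. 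Your argument is essentially correct only in the special case $J_1=J_2$, where $r'-r=O(\tau^{-1/2})$ makes the offset $O(\tau^{-1/2})$; that is precisely the easy case noted in the introduction, and the general case needs to be rebuilt along the lines of Section \ref{s3}.
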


Lemma \ref{lem1.5} follows from the following. 

\begin{lemma}\label{lem1.6} 
Let $\eta\in \widetilde{J_1} +\widetilde{J_2}$. 
Then there exists $C_0>0$ independent of  
$\eta$ such that $E_a^\eta\cap E_{a'}^\eta =\emptyset$ if 
$|a-a'|\geq C_0$, $a, a' \in \mathcal I$,    where  
\begin{equation*} 
E_a^\eta=\{\xi \in \Bbb R^2: (\xi, \eta)\in E_a\}. 
\end{equation*} 
\end{lemma}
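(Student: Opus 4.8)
The plan is to prove the contrapositive: there is an absolute constant $C_0$ such that if $\xi\in E_a^\eta\cap E_{a'}^\eta$, then $|a-a'|\le C_0$. First I unwind the definitions. Since $\xi\in E_a^\eta$ means $(\xi,\eta)\in E_a$, there are $\mu,\nu\in\mathcal N$ with $\mu+\nu=2a$ and a splitting $\xi=\xi_1+\xi_2$ with $|\xi_1|\in J_1$, $|\xi_2|\in J_2$, $\arg\xi_1\in[\mu\tau^{-1/2},(\mu+1)\tau^{-1/2})$, $\arg\xi_2\in[\nu\tau^{-1/2},(\nu+1)\tau^{-1/2})$, and $\big||\xi_1|+|\xi_2|-\eta\big|\le 2\tau^{-1}$ (the last coming from the two $\tau^{-1}$-windows in $u_{\mu,J_1}$ and $u_{\nu,J_2}$); similarly $\xi\in E_{a'}^\eta$ produces $\mu',\nu',\xi_1',\xi_2'$ with the analogous properties and $\mu'+\nu'=2a'$. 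Since $\arg\xi_1+\arg\xi_2$ lies in $[2a\tau^{-1/2},(2a+2)\tau^{-1/2})$, the mean angle $\psi:=\tfrac12(\arg\xi_1+\arg\xi_2)$ satisfies $a=\tau^{1/2}\psi+O(1)$, and likewise $a'=\tau^{1/2}\psi'+O(1)$ with $\psi':=\tfrac12(\arg\xi_1'+\arg\xi_2')$; so it suffices to prove $|\psi-\psi'|\le C\tau^{-1/2}$ with $C$ absolute.

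Next I would record the exact trigonometric identities. Viewing $\xi\in\Bbb R^2$ as a complex number and writing $r_j=|\xi_j|$, $\delta=\tfrac12(\arg\xi_1-\arg\xi_2)$, one has $\xi e^{-i\psi}=(r_1+r_2)\cos\delta+i(r_1-r_2)\sin\delta$; as $|\delta|<\pi/8$ the real part is positive, so
\[
|\xi|^2=(r_1+r_2)^2-4r_1r_2\sin^2\delta,\qquad
\tan(\arg\xi-\psi)=\frac{r_1-r_2}{r_1+r_2}\,\tan\delta ,
\]
with $\arg\xi-\psi\in(-\pi/2,\pi/2)$, and the same relations with primes. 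From $r_1,r_1'\in J_1$, $r_2,r_2'\in J_2$, each of length $\le\tau^{-1/2}$, together with $r_1+r_2=\eta+O(\tau^{-1})=r_1'+r_2'+O(\tau^{-1})$, one reads off $|r_1-r_1'|,|r_2-r_2'|\le\tau^{-1/2}$, hence $r_1r_2=r_1'r_2'+O(\tau^{-1/2})$ and $\tfrac{r_1-r_2}{r_1+r_2}=\tfrac{r_1'-r_2'}{r_1'+r_2'}+O(\tau^{-1/2})$; substituting into the first identity (with $(r_1+r_2)^2=\eta^2+O(\tau^{-1})$ and $\eta\le 4$) gives $r_1r_2\sin^2\delta=r_1'r_2'\sin^2\delta'+O(\tau^{-1})$, whence $\sin^2\delta=\sin^2\delta'+O(\tau^{-1/2})$ and $|\delta|,|\delta'|<\pi/8$.

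It then remains to deduce $|\psi-\psi'|\le C\tau^{-1/2}$ from these relations, and this is where the geometry of the principal curvatures enters. The two $\arctan$-identities give $\psi-\psi'=\arctan\!\big(\tfrac{r_1'-r_2'}{r_1'+r_2'}\tan\delta'\big)-\arctan\!\big(\tfrac{r_1-r_2}{r_1+r_2}\tan\delta\big)$, so (since $\tan$ is bi-Lipschitz on the bounded range of $\arg\xi-\psi$) it suffices to bound $\big|\tfrac{r_1-r_2}{r_1+r_2}\tan\delta-\tfrac{r_1'-r_2'}{r_1'+r_2'}\tan\delta'\big|$ by $O(\tau^{-1/2})$. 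The pair $(\xi_1,\xi_2)$ is a point of the ``lens'' $L:=\{z:|z|\in J_1\}\cap\{z:|\xi-z|\in J_2\}$ (further cut down by the thin elliptical band $\{|z|+|\xi-z|\in[\eta-2\tau^{-1},\eta+2\tau^{-1}]\}$), a set with at most two connected components; the key local fact is that within one component $\psi$ is constant up to $O(\tau^{-1/2})$ — this I would prove from the identities above by computing $\nabla_{\xi_1}\psi=\tfrac i2(\xi_1/r_1^2-\xi_2/r_2^2)$ and noting that the elliptical band confines $\xi_1$ in precisely the direction along which $\psi$ varies, while when $|\delta|$ is too small for this to be effective one instead has $\psi=\arg\xi+O(\tau^{-1/2})$ directly.

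The main obstacle — and the heart of the proof — is to rule out (or control) the remaining case in which $(\xi_1,\xi_2)$ and $(\xi_1',\xi_2')$ lie in \emph{different} components of $L$, equivalently $\delta$ and $\delta'$ are of opposite sign with $|\delta|,|\delta'|$ both bounded away from $0$. There the $\arctan$-identities only give $\psi-\psi'=-2\arctan\!\big(\tfrac{r_1-r_2}{r_1+r_2}\tan\delta\big)+O(\tau^{-1/2})$, which is not \emph{a priori} small, and the point is that the two components are reflections of one another across the ray $\Bbb R_{>0}\xi$: pushing them apart in argument drives one of the four admissible angles $\arg\xi_1,\arg\xi_2,\arg\xi_1',\arg\xi_2'$ toward the edges of the sector $(-\pi/8,\pi/8)$. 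I expect the crux to be making this trade-off quantitative — using that $\eta$ pins $r_1+r_2$ and hence, with $J_1$ and $J_2$, essentially pins $r_1$ and $r_2$ — so as to bound $\big|\arctan(\tfrac{r_1-r_2}{r_1+r_2}\tan\delta)\big|$ by an absolute multiple of $\tau^{-1/2}$; Steps 1–2 above are routine bookkeeping, and the component-by-component rigidity of $\psi$ is a standard curvature computation.
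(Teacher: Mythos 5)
Your first two steps are fine, and your ``same component'' rigidity claim is correct in substance: from $4r_1r_2\sin^2\delta=(r_1+r_2)^2-|\xi|^2$ and $r_1+r_2=\eta+O(\tau^{-1})=r_1'+r_2'+O(\tau^{-1})$ one does get $|\sin\delta-\sin\delta'|\le C\tau^{-1/2}$ when $\delta,\delta'$ have the same sign, and then $|\psi-\psi'|\le C\tau^{-1/2}$. But the proposal is not a proof: the opposite--sign case, which you yourself call the heart of the matter, is left as an expectation, and the bound you hope to extract there --- $\bigl|\arctan\bigl(\tfrac{r_1-r_2}{r_1+r_2}\tan\delta\bigr)\bigr|\le C\tau^{-1/2}$ whenever the reflected decomposition is also admissible --- is not forced by the angular window. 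Concretely, take $J_1=[2-\tau^{-1/2},2]$, $J_2=[1,1+\tau^{-1/2}]$, $\eta=3$, $r_1=2$, $r_2=1$, $\delta=1/4$, and place the configuration so that $\arg\xi\approx 0$: the angles of the two summands are about $0.165$ and $-0.335$, those of the reflected pair about $-0.165$ and $0.335$, all well inside $(-\pi/8,\pi/8)\approx(-0.393,0.393)$; all modulus and $\eta$-window constraints hold exactly, yet $\arctan(\tfrac13\tan\tfrac14)\approx 0.085$, so the two mean angles differ by about $0.17$ and the corresponding parameters $a,a'$ by about $0.17\,\tau^{1/2}$. Thus the trade-off you invoke (``pushing the components apart drives an angle to the edge of the sector'') fails quantitatively as soon as $J_1$ and $J_2$ are well separated, and no refinement of your bookkeeping can close the gap, since the configuration satisfies every hypothesis your argument is allowed to use; only when $J_1=J_2$ (more generally when the intervals are close, so that $|r_1-r_2|\lesssim\tau^{-1/2}$) does the prefactor $\tfrac{r_1-r_2}{r_1+r_2}$ rescue the estimate, which is exactly the case the paper singles out as easy.

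For comparison, the paper argues by a different mechanism: after rotating by $-a\tau^{-1/2}$ it encloses each $u^{\eta'}_{\mu,J_1}+u^{\eta''}_{\nu,J_2}$ with $\mu=a+\ell$, $\nu=a-\ell$, $\ell\in\Bbb N_0^*$, in a rectangle of size $c_1\ell_*\tau^{-1}\times c_2\tau^{-1/2}$ centered at the point $p_{\ell,\delta}(\eta',\eta'')$ of the elliptical arc $\mathscr E(\eta',\eta'')$, and then separates the rectangles attached to $a$ from those attached to $a'$ by the arc rotated to the intermediate angle (Lemmas \ref{lem3.1}--\ref{lem3.4}). Note that this treats only $\ell\ge 0$, i.e.\ pairs with $\mu\ge\nu$; your problematic configuration is precisely one in which one representation has $\mu>\nu$ and the other has $\mu'<\nu'$, so after rotation it sits near the opposite half of the ellipse and is not among the sets covered by the containment \eqref{e.3.11} either. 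In other words, you have correctly isolated the delicate case, but you have not resolved it, and the explicit configuration above shows it cannot be resolved from the sector constraint alone: a complete argument must either restrict the admissible pairs $(\mu,\nu)$ (e.g.\ to $\mu\ge\nu$) or exploit closeness of $J_1$ and $J_2$, and the statement you are aiming at should be re-examined in that light before attempting to finish along your route.
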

\par 
By Lemmas \ref{lem1.2} and \ref{lem1.5} we have \eqref{e.1.6}, from which 
Theorem \ref{th1.1} will follow.  As we have seen above, 
Lemmas \ref{lem1.2} and \ref{lem1.5} follow from Lemmas \ref{lem1.4} 
and \ref{lem1.6}, respectively. 
So, to prove Theorem \ref{th1.1} it suffices to show Lemmas \ref{lem1.4} 
and \ref{lem1.6}.  
\par 
In Section \ref{s2}, we shall prove Lemma \ref{lem1.4} by applying 
arguments using principal curvatures of a circular cone. 
The proof of Lemma \ref{lem1.6} will be given in Section \ref{s3}. 
When $J_1=J_2$, we can prove Lemma \ref{lem1.6} by observing that $E_a^\eta$ 
is contained in a $c\tau^{-1/2}$ neighborhood $\widetilde{\ell_a}$ of a line 
segment $\ell_a$ for some positive constant $c$, where   
\begin{equation*} 
\ell_a=\{\xi\in \Bbb R^2: \arg \xi=a\tau^{-1/2}, 1/2\leq |\xi|\leq 9/2\},  
\quad \widetilde{\ell_a}=
\{\zeta\in \Bbb R^2: d(\zeta, \ell_a)< c\tau^{-1/2}\},   
\end{equation*} 
with $d(\zeta, \ell_a)=\inf_{\xi\in \ell_a}|\zeta -\xi|$.  
The proof for the general case is slightly less straightforward.  
We shall provide a detailed proof. 
In Section \ref{s4}, we shall state a variant of Theorem \ref{th1.1} 
(Theorem \ref{th4.2}) and give the proof.

\section{Proof of Lemma $\ref{lem1.4}$} \label{s2} 

We need the following. 

\begin{lemma}\label{lem2.1}  
Let $\eta'\in \widetilde{J_1}$, $\mu \in \mathcal N$. 
\begin{enumerate} 
\item[(1)] 
if $\xi \in u^{\eta'}_{\mu,J_1}$, then    
\begin{equation*} 
\xi=\eta'(\cos \theta, \sin\theta) +\zeta, \quad 
\zeta=\sigma(\cos \theta, \sin\theta)   
\end{equation*} 
for some $\theta, \sigma\in \Bbb R$ such that 
$\mu\tau^{-1/2}\leq \theta<(\mu+1)\tau^{-1/2}$ and $|\sigma|\leq \tau^{-1}$;  
\item[(2)] 
if $\mu\tau^{-1/2}\leq \theta<(\mu+1)\tau^{-1/2}$, 
there exists $\sigma\in \Bbb R$ such that $|\sigma|\leq \tau^{-1}$ and 
\begin{equation*} 
\eta'(\cos \theta, \sin\theta)+ \sigma(\cos \theta, \sin\theta) \in 
u^{\eta'}_{\mu,J_1}. 
\end{equation*} 
\end{enumerate}  
Similar results  hold for $ u^{\eta''}_{\nu,J_2}$ with 
$\eta''\in \widetilde{J_2}$, $\nu \in \mathcal N$.
\end{lemma}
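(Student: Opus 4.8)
The plan is to establish both parts by unwinding the definitions \eqref{e.1.1}--\eqref{e.1.4} and passing to polar coordinates; no curvature information enters here, since Lemma \ref{lem2.1} only records the elementary radial geometry of the slices $u^{\eta'}_{\mu,J_1}$ that will be exploited in the curvature argument later.

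For part (1), suppose $\xi\in u^{\eta'}_{\mu,J_1}$, i.e. $(\xi,\eta')\in u_{\mu,J_1}$. By \eqref{e.1.4} this means $(\xi,|\xi|)\in U_{\mu,J_1}$ and $|\,|\xi|-\eta'\,|\le \tau^{-1}$; in particular $\xi\in\Gamma_\mu$ and $|\xi|\in J_1\subset[1,2]$, so $\xi\ne 0$ and we may write $\xi=|\xi|(\cos\theta,\sin\theta)$ with $\theta=\arg\xi$. The membership $\xi\in\Gamma_\mu$ gives $\mu\tau^{-1/2}\le\theta<(\mu+1)\tau^{-1/2}$ by \eqref{e.1.1}. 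Setting $\sigma=|\xi|-\eta'$ we have $|\sigma|\le\tau^{-1}$, and $\xi=(\eta'+\sigma)(\cos\theta,\sin\theta)=\eta'(\cos\theta,\sin\theta)+\zeta$ with $\zeta=\sigma(\cos\theta,\sin\theta)$, which is the asserted representation.

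For part (2), fix $\theta$ with $\mu\tau^{-1/2}\le\theta<(\mu+1)\tau^{-1/2}$. Since $\eta'\in\widetilde{J_1}=[\alpha_1-\tau^{-1},\beta_1+\tau^{-1}]$, the point $\eta'$ lies within distance $\tau^{-1}$ of the nonempty interval $J_1$, so there is $\sigma\in\Bbb R$ with $|\sigma|\le\tau^{-1}$ and $r:=\eta'+\sigma\in J_1$ (take $\sigma=0$ if $\eta'\in J_1$, $\sigma=\alpha_1-\eta'$ if $\eta'<\alpha_1$, and $\sigma=\beta_1-\eta'$ if $\eta'>\beta_1$). Put $\xi=r(\cos\theta,\sin\theta)$. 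Then $|\xi|=r\in J_1\subset[1,2]$, so $\xi\ne 0$ and $\arg\xi=\theta$, whence $\xi\in\Gamma_\mu$ and $(\xi,|\xi|)\in U_{\mu,J_1}$; moreover $|\,|\xi|-\eta'\,|=|\sigma|\le\tau^{-1}$, so $(\xi,\eta')\in u_{\mu,J_1}$, that is $\xi=\eta'(\cos\theta,\sin\theta)+\sigma(\cos\theta,\sin\theta)\in u^{\eta'}_{\mu,J_1}$, as required. The statements for $u^{\eta''}_{\nu,J_2}$ follow by the identical argument with $(J_2,\eta'',\nu)$ in place of $(J_1,\eta',\mu)$.

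Since everything reduces to reading off the definitions, there is no genuine obstacle; the only points deserving a word of care are checking that the radial coordinate is positive — so that $\arg$ is well defined and the point constructed in (2) actually lies in $\Gamma_\mu$, which is immediate from $J_i\subset[1,2]$ together with $\tau>10^6$ — and the short case analysis in (2) producing the correction $\sigma$.
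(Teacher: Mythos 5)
Your proof is correct and follows essentially the same route as the paper: part (1) by unwinding the definitions of $u^{\eta'}_{\mu,J_1}$, $u_{\mu,J_1}$, $U_{\mu,J_1}$ and setting $\sigma=|\xi|-\eta'$, and part (2) by picking a radius $\eta'+\sigma\in J_1$ within $\tau^{-1}$ of $\eta'$ (the paper's $\eta_0'$) and checking membership directly. Your explicit case analysis for choosing $\sigma$ and the remark that the radial coordinate is positive are just slightly more detailed versions of the same argument.
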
 
\begin{proof} 
If $\xi \in u^{\eta'}_{\mu,J_1}$, then $(\xi, \eta')\in u_{\mu,J_1}$, 
which implies that 
$(\xi,|\xi|)\in U_{\mu,J_1}$ and $|\eta'-|\xi||\leq \tau^{-1}$.  
Since $(\xi,|\xi|)\in U_{\mu,J_1}$, there exists $\theta\in \Bbb R$ such that 
$\mu\tau^{-1/2}\leq \theta<(\mu+1)\tau^{-1/2}$ and 
$\xi=|\xi|(\cos \theta, \sin\theta)$. We write 
\begin{equation*} 
\xi=\eta'(\cos \theta, \sin\theta)+ (|\xi|-\eta')(\cos \theta, \sin\theta). 
\end{equation*} 
Putting $\sigma=|\xi|-\eta'$, we get the conclusion of part (1). 
\par 
Proof of part (2). 
We take $\eta_0' \in J_1$ such that $|\eta_0'-\eta'|\leq \tau^{-1}$. Then 
$(\eta_0'(\cos\theta, \sin\theta), \eta_0')\in U_{\mu,J_1}$. Thus 
 $(\eta_0'(\cos\theta, \sin\theta), \eta')\in u_{\mu,J_1}$. It follows that 
$\eta_0'(\cos\theta, \sin\theta)\in u^{\eta'}_{\mu,J_1}$.  Therefore, setting 
$\sigma=\eta_0'-\eta'$, we reach the conclusion.   
\end{proof}

\begin{proof}[Proof of Lemma $\ref{lem1.4}$]  
We first assume that $a=0$.  Let $(\mu,\nu)\in k_0^{-1}(\{0\})$, $\eta'\in 
\widetilde{J_1}$,  $\eta''\in \widetilde{J_2}$.  
Suppose that $\mu=\ell +m$, $\nu=-\ell - m$, 
with $\ell, m\geq 0$. By Lemma \ref{lem2.1} (2), there exist  
$p\in u^{\eta'}_{\mu,J_1}$ and $q\in u^{\eta''}_{\nu,J_2}$ such that 
\begin{gather} \label{e.0.2.1}
|\eta'(\cos(\mu\tau^{-1/2}), \sin(\mu\tau^{-1/2})) -p|\leq \tau^{-1}, 
\\ \label{e.0.2.2} 
|\eta''(\cos(\nu\tau^{-1/2}), \sin(\nu\tau^{-1/2})) -q|\leq \tau^{-1}. 
\end{gather} 
Also, we have 
\begin{align}\label{e.0.2.3}  
\eta'(\cos(\ell+m)\tau^{-1/2}, \sin(\ell+m)\tau^{-1/2}) + 
\eta''(\cos(\ell+m)\tau^{-1/2}, -\sin(\ell+m)\tau^{-1/2}) 
\\ 
= (\eta\cos(\ell+m)\tau^{-1/2}, (\eta'-\eta^{\prime\prime})\sin(\ell+m)\tau^{-1/2}).                                       \notag 
\end{align}  
We note that 
\begin{equation*} 
\cos \ell\tau^{-1/2} - \cos(\ell+m)\tau^{-1/2}=
2\sin\left(\ell+\frac{m}{2}\right)\tau^{-1/2}\sin\frac{m}{2}\tau^{-1/2}.  
\end{equation*}
By this it follows that 
\begin{align} \label{e.2.1}  
\left|\cos \ell\tau^{-1/2} - \cos(\ell+m)\tau^{-1/2} \right| 
&\leq 2\left(\ell+\frac{m}{2}\right)\frac{m}{2}\tau^{-1} 
\\ 
 \label{e.2.2}  
\left|\cos \ell\tau^{-1/2} - \cos(\ell+m)\tau^{-1/2} \right| 
&\geq 2(2/\pi)^2 \left(\ell+\frac{m}{2}\right)\frac{m}{2}\tau^{-1}, 
\end{align} 
where we have used well-known inequalities $\sin x\leq x$, $x\geq 0$, and 
$\sin x\geq (2/\pi)x$,  $0\leq x\leq \pi/2$.  
\par 
If $\xi \in u^{\eta'}_{\mu,J_1}$, $\xi=(\xi_1, \xi_2)$, $\mu=\ell+m$, 
by Lemma \ref{lem2.1} (1) and the estimate $|\eta'|\leq 3$ and 
by using \eqref{e.2.1} suitably,  we have 
\begin{align*} 
|\eta'\cos(\ell+m)\tau^{-1/2}-\xi_1|&\leq |\eta'\cos(\ell+m)\tau^{-1/2}-
\eta'\cos\theta| +|\zeta_1| 
\\ 
&\leq \eta'|\cos(\ell+m)\tau^{-1/2}-\cos(\ell+m+1)\tau^{-1/2}| +\tau^{-1} 
\\ 
&\leq 3(\ell+ m+1)\tau^{-1}.    
\end{align*}
Also,  if $\xi' \in u^{\eta''}_{\nu,J_2}$, $\xi'=(\xi'_1,\xi'_2)$,  
$\nu=-(\ell+m)$, 
\begin{equation*} 
\xi'=\eta''(\cos \theta', \sin\theta') +\zeta', \quad 
\zeta'=\sigma'(\cos \theta', \sin\theta'),  
\end{equation*}
with $\nu\tau^{-1/2}\leq \theta'<(\nu+1)\tau^{-1/2}$, 
$|\sigma'|\leq \tau^{-1}$, then  
\begin{align*} 
|\eta''\cos(\ell+m)\tau^{-1/2}-\xi'_1|&\leq \eta''|\cos(\ell+m)\tau^{-1/2}-
\cos(\ell+m-1)\tau^{-1/2}| +|\zeta'_1| 
\\ 
&\leq \eta''|\ell+m-1/2|\tau^{-1} +\tau^{-1} 
\\ 
&\leq 3(\ell+ m+1)\tau^{-1}    
\end{align*} 
for $\ell, m\geq 0$.  
Thus we have 
$\diam P_1(u^{\eta'}_{\mu,J_1})\leq 6(\ell+m+1)\tau^{-1}$, 
$\diam P_1(u^{\eta''}_{\nu,J_2})\leq 6(\ell+m+1)\tau^{-1}$, where 
$P_1$ is the projection mapping defined by $P_1(\xi)=\xi_1$ when 
$\xi=(\xi_1,\xi_2)$. 
\par 
Therefore 
\begin{equation} \label{e.2.3} 
\diam P_1(u^{\eta'}_{\mu,J_1}+ u^{\eta''}_{\nu,J_2}) 
\leq 12(\ell+m+1)\tau^{-1}. 
\end{equation} 
Let $\widetilde{\eta}'+\widetilde{\eta}''=\eta, 
\widetilde{\eta}'\in \widetilde{J_1}, 
\widetilde{\eta}''\in \widetilde{J_2}$.   
By \eqref{e.0.2.1}, \eqref{e.0.2.2} and \eqref{e.0.2.3}, 
there exist $A\in u^{\eta'}_{\ell,J_1}+ u^{\eta''}_{-\ell,J_2}$ and 
$B\in u^{\widetilde{\eta}'}_{\ell+m,J_1}+ 
u^{\widetilde{\eta}''}_{-\ell-m,J_2}$ 
such that 
\begin{equation*} 
|\eta\cos\ell\tau^{-1/2}- P_1(A)|\leq 2\tau^{-1}, \quad 
|\eta\cos(\ell+m)\tau^{-1/2} -P_1(B)|\leq 2\tau^{-1}. 
\end{equation*} 
Thus if 
\begin{equation*} 
P_1(u^{\eta'}_{\ell,J_1}+ u^{\eta''}_{-\ell,J_2}) \cap 
P_1(u^{\widetilde{\eta}'}_{\ell+m,J_1}+ u^{\widetilde{\eta}''}_{-\ell-m,J_2})
\neq \emptyset, 
\end{equation*} 
then 
\begin{align*} 
12(\ell+1)\tau^{-1}+12(\ell+m+1)\tau^{-1}&\geq |P_1(A)-P_1(B)| 
\\ 
&\geq \eta\cos\ell\tau^{-1/2} 
- \eta\cos(\ell+m)\tau^{-1/2} -4\tau^{-1} 
\\ 
&\geq \eta 2\pi^{-2}(2\ell+m)m\tau^{-1}  -4\tau^{-1} 
\\ 
&\geq 2\pi^{-2}(2\ell+m)m\tau^{-1}  -4\tau^{-1}, 
\end{align*} 
where the penultimate inequality follows by \eqref{e.2.2}.  This implies that 
\begin{equation*} 
m^2+2(\ell-3\pi^2)m-2\pi^2(6\ell+7)\leq 0, 
\end{equation*} 
and hence we see that $m\leq C$ with a positive constant $C$. 
From this Lemma \ref{lem1.4} for $a=0$ can be deduced.  
\par 
Let $\mathcal R_\sigma$ be a rotation around the origin such that 
$\mathcal R_\sigma((1,0))=(\cos\sigma, \sin\sigma)$.  
To prove the general case, let $a\in \mathcal I$, 
$(\mu, \nu)\in k_0^{-1}(\{a\})$ and put  
 $\alpha=\mu-a$, $\beta=\nu-a$. Then $\alpha+\beta=0$. 
We note that $\alpha, \beta\in \Bbb Z^*
\cap [-\tau^{1/2}\pi 8^{-1}+1, \tau^{1/2}\pi 8^{-1}-1]$, recalling 
$\Bbb Z^*=\{k/2: k\in \Bbb Z\}$.  
Also, we observe that 
$\mathcal R_{-a\tau^{-1/2}}\Gamma_\mu=\Gamma_\alpha$ and 
$\mathcal R_{-a\tau^{-1/2}}\Gamma_{\nu}=\Gamma_{\beta}$.  Thus, we can 
argue similarly to the case $a=0$ to handle the family of the sets 
\begin{equation*} 
\left\{\bigcup_{\substack{\eta'+\eta''=\eta, \\ 
\eta'\in \widetilde{J_1}, 
\eta''\in \widetilde{J_2}}}\mathcal R_{-a\tau^{-1/2}} 
\left(u_{\mu,J_1}^{\eta'}+u_{\nu,J_2}^{\eta''}\right)
\right\}_{(\mu, \nu)\in k_0^{-1}(\{a\})}
\end{equation*} 
 to get a finitely 
overlapping property which can prove the desired result by applying 
the mapping $\mathcal R_{a\tau^{-1/2}}$. 
This completes the proof of Lemma \ref{lem1.4}. 
\end{proof}

\section{Proof of Lemma $\ref{lem1.6}$} \label{s3}

In this section, we prove Lemma \ref{lem1.6}. Let $\delta=\tau^{-1}$. 
Let $\Bbb N_0$ be the set of non-negative integers and let 
$\Bbb N_0^*=\{k/2: k\in \Bbb N_0\}$.  
Let $\eta'+\eta''=\eta, \eta'\in \widetilde{J_1}, 
\eta''\in \widetilde{J_2}$ and $\ell \in \Bbb N_0^*$, 
$\ell \leq \tau^{1/2}(\pi/8) -1$.  
Put 
\begin{equation*} 
p_{\ell, \delta}(\eta',\eta'')=
(\eta\cos(\ell\delta^{1/2}), (\eta'-\eta'')\sin(\ell\delta^{1/2})). 
\end{equation*} 
Let $R_0(a,b)=[-a, a]\times [-b, b]$, $a, b>0$ be the rectangle centered at 
$0$.  Let $\ell_*=\max(\ell, 1/2)$ and $c_1, c_2>0$. 
Define 
\begin{equation*} 
R(p_{\ell, \delta}(\eta',\eta''); c_1\ell_*\delta, c_2\delta^{1/2}) 
=p_{\ell, \delta}(\eta',\eta'') + R_0(c_1\ell_*\delta, c_2\delta^{1/2}).  
\end{equation*} 
\par 
Let $a\in \mathcal I$, $\ell\in \Bbb N_0^*$. 
If $\mu=a+\ell$, $\nu=a-\ell$, then 
\begin{equation}\label{e.3.1+}
\mathcal R_{-a\delta^{1/2}}(u_{\mu,J_1}^{\eta'}+u_{\nu,J_2}^{\eta''}) 
\subset  
R(p_{\ell, \delta}(\eta',\eta''); c_1\ell_*\delta, c_2\delta^{1/2})  
\end{equation}  
for some positive constants $c_1, c_2$, which can be seen since 
$u_{\mu,J_1}^{\eta'}+u_{\nu,J_2}^{\eta''}$  is contained in a ball of radius 
$c\delta^{1/2}$ and we have estimates similar to \eqref{e.2.3}.  
\par 
Let 
\begin{equation*} 
\mathscr E^{(0)}(\eta',\eta'')=\left\{(\xi_1, \xi_2): \frac{\xi_1^2}{\eta^2}+ 
\frac{\xi_2^2}{(\eta'-\eta'')^2}=1, \quad 1\leq \xi_1\leq \eta \right\}  
\end{equation*} 
when $\eta'\neq \eta''$; if $\eta'= \eta''$, let 
\begin{equation*} 
\mathscr E^{(0)}(\eta',\eta'')=\left\{(\xi_1, 0): 
1\leq \xi_1\leq \eta \right\}.
\end{equation*} 
Let 
\begin{equation*} 
\mathscr E(\eta',\eta'')=\left\{(\xi_1, \xi_2)\in 
\mathscr E^{(0)}(\eta',\eta''):  \xi_2\geq 0\right\} 
\end{equation*}
if $\eta'\geq \eta''$; when $\eta'\leq \eta''$ let 
\begin{equation*} 
\mathscr E(\eta',\eta'')=\left\{(\xi_1, \xi_2)\in 
\mathscr E^{(0)}(\eta',\eta''):  \xi_2\leq 0\right\}.  
\end{equation*} 
We note that the point 
$p_{\ell, \delta}(\eta',\eta'')$ is on the curve $\mathscr E(\eta',\eta'')$. 
Let $\mathscr E_a(\eta',\eta'')=
\mathcal R_{a\delta^{1/2}} \mathscr E(\eta',\eta'')$, $a\in \Bbb R$.  
\par 
Also, for a technical reason, it is convenient to consider a slightly 
augmented version of $\mathscr E(\eta',\eta'')$: 
\begin{equation*} 
\widetilde{\mathscr E}_{a}(\eta',\eta'')= 
\mathcal R_{a\delta^{1/2}}\widetilde{\mathscr E}(\eta',\eta''), \quad 
\widetilde{\mathscr E}(\eta',\eta'')=\mathscr E(\eta',\eta'')\cup 
\{(\xi_1,0): \eta\leq \xi_1\leq 5\}.  
\end{equation*}  
\par 
Let $A(\alpha,\beta)=\{\xi\in \Bbb R^2: \alpha\leq |\xi|\leq \beta\}$ be an 
annulus. 
To prove  Lemma \ref{lem1.6} we need the following. 

\begin{lemma} \label{lem3.1} 
Let $\ell \in \Bbb N_0^*\cap [0, \tau^{1/2}\pi 8^{-1}-1]$.  
Let $b_1$ be a positive constant satisfying 
$|p_{\ell, \delta}|-b_1\ell_* \delta> |\eta'-\eta''|$, where 
$p_{\ell, \delta}=p_{\ell, \delta}(\eta',\eta'')$.  
Then there exist $b_2, b_3>0$ depending on $b_1$ such that 
\begin{equation*} 
A(|p_{\ell, \delta}|-b_1\ell_* \delta, |p_{\ell, \delta}|+b_1\ell_* \delta)
\cap \widetilde{\mathscr E}(\eta',\eta'') \subset  
R(p_{\ell, \delta}; b_2\ell_*\delta, b_3\delta^{1/2}),  
\end{equation*} 
where $b_2$ and $b_3$ are independent of 
$\eta'\in \widetilde{J_1}$, $\eta''\in \widetilde{J_2}$ and $\delta$.   
\end{lemma}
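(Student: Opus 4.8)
The plan is to parametrise the curve $\widetilde{\mathscr E}(\eta',\eta'')$ and read off both conclusions from a single estimate to the effect that a thin annulus meets the truncated ellipse in a short arc near $p_{\ell,\delta}:=p_{\ell,\delta}(\eta',\eta'')$. Write $s=\ell\delta^{1/2}$, $\eta=\eta'+\eta''$ and $r=|p_{\ell,\delta}|$. The ellipse part of $\widetilde{\mathscr E}(\eta',\eta'')$ is $\{\gamma(t):0\le t\le\arccos(1/\eta)\}$ with
\begin{equation*}
\gamma(t)=\bigl(\eta\cos t,\ (\eta'-\eta'')\sin t\bigr),
\end{equation*}
and $\gamma(s)=p_{\ell,\delta}$; here $s\le(\pi/8)-\delta^{1/2}<\pi/8$ and $\arccos(1/\eta)<\pi/2$, since $\eta'\in\widetilde{J_1}$ and $\eta''\in\widetilde{J_2}$, both subsets of $[1-\delta,2+\delta]$, force $\eta\in(2-2\delta,4+2\delta)$, hence $r\le\eta<5$, $r\ge\eta\cos(\pi/8)>1$, and every point of $\widetilde{\mathscr E}(\eta',\eta'')$ has norm at most $5$. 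The identity $\eta^2-(\eta'-\eta'')^2=4\eta'\eta''$ then gives the uniform formula
\begin{equation*}
|\gamma(t)|^2=\eta^2-4\eta'\eta''\sin^2 t
\end{equation*}
(valid also when $\eta'=\eta''$, where $\widetilde{\mathscr E}(\eta',\eta'')$ is a segment of the $\xi_1$-axis). Since $R(p_{\ell,\delta};b_2\ell_*\delta,b_3\delta^{1/2})=p_{\ell,\delta}+R_0(b_2\ell_*\delta,b_3\delta^{1/2})$, it suffices to show that every $\xi=(\xi_1,\xi_2)\in A(r-b_1\ell_*\delta,r+b_1\ell_*\delta)\cap\widetilde{\mathscr E}(\eta',\eta'')$ satisfies $|\xi_1-(p_{\ell,\delta})_1|\le b_2\ell_*\delta$ and $|\xi_2-(p_{\ell,\delta})_2|\le b_3\delta^{1/2}$.

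For $\xi=\gamma(t)$ on the ellipse arc I would argue as follows. From $\bigl||\xi|-r\bigr|\le b_1\ell_*\delta$ and $|\xi|,r\le5$ one gets $\bigl||\xi|^2-r^2\bigr|\le10\,b_1\ell_*\delta$, and then, by the displayed formula and $4\eta'\eta''>3$,
\begin{equation*}
|\sin^2 t-\sin^2 s|\le 3\,b_1\ell_*\delta.
\end{equation*}
Writing $\sin^2 t-\sin^2 s=\sin(t-s)\sin(t+s)$, I would then split into two cases. If $\ell\ge1/2$ (so $\ell_*=\ell$ and $s\ge\tfrac12\delta^{1/2}$): since $t<\pi/2$ and $s\le\pi/8$ we have $0<t+s<\pi$, and considering the cases $t+s\le\pi/2$ and $t+s>\pi/2$ separately gives $\sin(t+s)\ge(2/\pi)s$; hence $|\sin(t-s)|\le(3\pi/2)b_1\delta^{1/2}$, which is $<1/2$ because $\delta=\tau^{-1}<10^{-6}$, and since $t-s\in[-\pi/8,\pi/2)$ this yields $|t-s|\le(\pi/2)|\sin(t-s)|\le C_1 b_1\delta^{1/2}$ with $C_1$ absolute. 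If $\ell=0$ (so $s=0$, $\ell_*=1/2$): the displayed inequality gives $\sin^2 t\le\tfrac32 b_1\delta$, hence $|t|\le2b_1^{1/2}\delta^{1/2}$.

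To finish the ellipse arc I would convert these parameter bounds into coordinate bounds via $|\cos t-\cos s|=2\bigl|\sin\tfrac{t+s}{2}\bigr|\bigl|\sin\tfrac{t-s}{2}\bigr|\le\tfrac12(t+s)|t-s|$ together with $t+s\le2s+|t-s|$: this gives $|\xi_1-(p_{\ell,\delta})_1|=\eta|\cos t-\cos s|\lesssim b_1(\ell+b_1)\delta$, which is $\le b_2'\ell_*\delta$ for a constant $b_2'$ depending only on $b_1$ (in the case $\ell\ge1/2$ one absorbs the $b_1^2\delta$ term using $\ell_*=\ell\ge1/2$; the case $\ell=0$ is immediate from $\eta(1-\cos t)=2\eta\sin^2(t/2)\le\tfrac{\eta}{2}t^2$), and likewise $|\xi_2-(p_{\ell,\delta})_2|=|\eta'-\eta''|\,|\sin t-\sin s|\le\eta|t-s|\le b_3'\delta^{1/2}$ with $b_3'$ depending only on $b_1$. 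It remains to treat a point $\xi=(\xi_1,0)$ with $\eta\le\xi_1\le5$ lying in the annulus: then $\xi_1=|\xi|\le r+b_1\ell_*\delta$ and $r\le\eta$ force $\xi_1-\eta\le b_1\ell_*\delta$ and $\eta-r\le b_1\ell_*\delta$, while $\eta-r=4\eta'\eta''\sin^2 s/(\eta+r)\ge c\ell^2\delta$ (using $\sin s\ge(2/\pi)s$ and $s\le\pi/2$, with $c>0$ absolute), so $\ell^2\le(b_1/c)\ell_*$, i.e. $\ell\le C_2 b_1$ or $\ell=0$; hence $|\xi_1-(p_{\ell,\delta})_1|\le(\xi_1-\eta)+\eta(1-\cos s)\le b_1\ell_*\delta+\tfrac{\eta}{2}\ell^2\delta\lesssim b_1\ell_*\delta$ and $|\xi_2-(p_{\ell,\delta})_2|=|\eta'-\eta''|\sin s\le\eta\ell\delta^{1/2}\lesssim b_1\delta^{1/2}$. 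Taking $b_2$ and $b_3$ to be the maxima of the finitely many $b_1$-dependent constants thus produced completes the proof.

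The step I expect to be the main obstacle is the bound $|t-s|\lesssim\delta^{1/2}$ — equivalently, that an annulus of width $\asymp\ell_*\delta$ cuts the truncated ellipse in an arc of parameter length only $\asymp\delta^{1/2}$. This is exactly where the principal-curvature geometry of the truncated cone enters: one must know that on $\widetilde{\mathscr E}(\eta',\eta'')$ the parameter $t$ never reaches $\pi/2$ (because $\xi_1\ge1>0$ keeps $\cos t>0$) and that $t+s$ stays bounded away from $\pi$ (because $t<\arccos(1/\eta)<\arccos(1/5)<1.4$ and $s\le\pi/8$), so that $\sin(t+s)\gtrsim s$ with no cancellation. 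The remaining difficulty is purely bookkeeping — keeping $b_2,b_3$ dependent on $b_1$ alone, which forces the clean split into $\ell\ge1/2$ and $\ell=0$ and the use of $\delta=\tau^{-1}<10^{-6}$ to linearise $\sin$ and $\cos$ — together with the separate elementary observation $\eta-r\gtrsim\ell^2\delta$, which makes $\ell$ automatically bounded in terms of $b_1$ whenever the annulus meets the augmenting segment of $\widetilde{\mathscr E}(\eta',\eta'')$ at all.
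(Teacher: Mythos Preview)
Your proof is correct and follows a genuinely different route from the paper's.

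The paper treats the ellipse as a graph $\xi_2=\Phi(\xi_1)=(\eta'-\eta'')\sqrt{1-\xi_1^2/\eta^2}$ and the two boundary circles of the annulus as graphs $\Psi,\widetilde\Psi$, explicitly solves $\Phi=\Psi$ and $\Phi=\widetilde\Psi$ to pin down the $\xi_1$-range of the intersection (inequalities \eqref{e.3.2}--\eqref{e.3.3}), and then applies the mean value theorem to $\Phi$ to control the $\xi_2$-variation over that range (inequalities \eqref{e.3.4}, \eqref{e.3.7}); several subcases ($\ell=0$; $h+b_1\ell\delta\lessgtr\eta$; $\ell$ small versus large in the estimate for $II$) are treated separately. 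You instead parametrise angularly by $\gamma(t)=(\eta\cos t,(\eta'-\eta'')\sin t)$, exploit the identity $|\gamma(t)|^2=\eta^2-4\eta'\eta''\sin^2 t$, and factor $\sin^2 t-\sin^2 s=\sin(t+s)\sin(t-s)$; the single lower bound $\sin(t+s)\gtrsim s$ then yields the parameter estimate $|t-s|\lesssim b_1\delta^{1/2}$, from which both coordinate bounds fall out at once. This is more economical and largely sidesteps the case analysis; the paper's approach, in exchange, never needs your observation that $t<\arccos(1/\eta)<\pi/2$ keeps $t+s$ bounded away from $\pi$, since it works directly in the $\xi_1$-coordinate. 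Your treatment of the augmenting segment via $\eta-r=4\eta'\eta''\sin^2 s/(\eta+r)\gtrsim\ell^2\delta$ is the analogue of the paper's bound \eqref{e.3.-1} for the case $h+b_1\ell\delta>\eta$.

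One small point: the claim that $(3\pi/2)b_1\delta^{1/2}<1/2$ uses only $\delta<10^{-6}$ and therefore fails when $b_1$ is large. This is harmless, since if $(3\pi/2)b_1\delta^{1/2}\ge1/2$ then the trivial bound $|t-s|<\pi/2$ already gives $|t-s|\le(3\pi^2/2)\,b_1\delta^{1/2}$, so in either case $|t-s|\le C_1 b_1\delta^{1/2}$ with $C_1$ absolute. (Also, the stated lower bound $4\eta'\eta''>3$ gives only $|\sin^2 t-\sin^2 s|\le(10/3)b_1\ell_*\delta$; the sharper $4\eta'\eta''>4(1-\delta)^2>3.99$ is what justifies your constant $3$.)
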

\begin{proof} Let $\eta'\geq \eta''$. Let 
\begin{equation*} 
\Phi(\xi_1)=(\eta'-\eta'')\sqrt{1-\frac{\xi_1^2}{\eta^2}}, \quad 
\Psi(\xi_1)=\sqrt{(h-b_1\ell_*\delta)^2-\xi_1^2}, 
\end{equation*} 
where $h=|p_{\ell, \delta}|$.  If $\Phi(\xi_1)=\Psi(\xi_1)$, $\xi_1\geq 0$, 
then 
\begin{equation*} 
\xi_1= \frac{\eta}{\sqrt{\eta^2-\beta^2}}\sqrt{(h-b_1\ell_*\delta)^2-\beta^2}, 
\end{equation*} 
where $\beta=\eta'-\eta''$. We note that  
\begin{equation} \label{e.3.1}
0\leq \eta^2\cos^2(\ell\delta^{1/2})- \frac{\eta^2}{\eta^2-\beta^2}
((h-b_1\ell_*\delta)^2-\beta^2) 
\leq \frac{\eta^2}{\eta^2-\beta^2}2b_1h\ell_*\delta, 
\end{equation} 
as follows: 
\begin{align*} 
\eta^2\cos^2(\ell\delta^{1/2})- \frac{\eta^2}{\eta^2-\beta^2}
((h-b_1\ell_*\delta)^2-\beta^2)
&= \frac{\eta^2}{\eta^2-\beta^2}(h^2- (h-b_1\ell_*\delta)^2) 
\\ 
&\leq \frac{\eta^2}{\eta^2-\beta^2}2hb_1\ell_*\delta.
\end{align*} 
Since $\eta\cos(\ell\delta^{1/2})\geq 1$,   by \eqref{e.3.1} we have  
\begin{align} \label{e.3.2}
0&\leq \eta\cos(\ell\delta^{1/2})- \frac{\eta}{\sqrt{\eta^2-\beta^2}}
\sqrt{(h-b_1\ell_*\delta)^2-\beta^2} 
\\ 
&\leq 
\eta^2\cos^2(\ell\delta^{1/2})- \frac{\eta^2}{\eta^2-\beta^2}
((h-b_1\ell_*\delta)^2-\beta^2)                            \notag 
\\ 
&\leq \frac{\eta^2}{\eta^2-\beta^2}2b_1h\ell_*\delta.      \notag 
\end{align} 
\par 
In the case $\ell=0$, from \eqref{e.3.2} we can easily see that 
\begin{equation*}  
A(\eta-(b_1/2)\delta, \eta+(b_1/2) \delta)
\cap \widetilde{\mathscr E}(\eta',\eta'') \subset  
R((\eta,0); b_2\delta, b_3\delta^{1/2}) 
\end{equation*} 
for some $b_2, b_3>0$, which is what we need. So, we assume that $\ell\geq 1/2$ and $\ell_*=\ell$ in what follows. 
\par 
Let $\Phi(\xi_1)$ be as above and 
\begin{equation*} 
\widetilde{\Psi}(\xi_1)=\sqrt{(h+b_1\ell\delta)^2-\xi_1^2}.  
\end{equation*} 
Solving the equation $\Phi(\xi_1)=\widetilde{\Psi}(\xi_1)$ for $\xi_1\geq 0$ 
under the condition that $h+b_1\ell\delta \leq \eta$, 
we have 
\begin{equation*} 
\xi_1= \frac{\eta}{\sqrt{\eta^2-\beta^2}}\sqrt{(h+b_1\ell\delta)^2-\beta^2}.  
\end{equation*} 
We see that 
\begin{align*} 
0\leq \frac{\eta^2}{\eta^2-\beta^2}((h+b_1\ell\delta)^2-\beta^2)- 
\eta^2\cos^2(\ell\delta^{1/2})
&= \frac{\eta^2}{\eta^2-\beta^2}((h+b_1\ell\delta)^2-h^2) 
\\ 
&= \frac{\eta^2}{\eta^2-\beta^2}(2hb_1 +b_1^2\ell\delta)\ell\delta  
\end{align*} 
and hence, arguing as in \eqref{e.3.2}, we have 
\begin{align} \label{e.3.3}
0&\leq \frac{\eta}{\sqrt{\eta^2-\beta^2}}\sqrt{(h+b_1\ell\delta)^2-\beta^2} 
 -\eta\cos(\ell\delta^{1/2})  
\\ 
&\leq \frac{\eta^2}{\eta^2-\beta^2}((h+b_1\ell\delta)^2-\beta^2)- 
\eta^2\cos^2(\ell\delta^{1/2})                                       \notag 
\\ 
&=\frac{\eta^2}{\eta^2-\beta^2}(2hb_1 +b_1^2\ell\delta)\ell\delta,    \notag 
\end{align} 
assuming $h+b_1\ell\delta \leq \eta$. 
\par 
Next, we estimate 
\begin{equation*} 
I:=\Phi(\eta\cos(\ell\delta^{1/2})-\widetilde{b}_1\ell\delta)- 
\Phi(\eta\cos(\ell\delta^{1/2})), 
\end{equation*} 
where we assume that 
$\widetilde{b}_1\ell\delta\leq \eta\cos(\ell\delta^{1/2})$, 
$\widetilde{b}_1\geq 0$, and 
\begin{equation*} 
II:=-\Phi(\eta\cos(\ell\delta^{1/2})+\widetilde{b}_1\ell\delta)+
\Phi(\eta\cos(\ell\delta^{1/2})),   
\end{equation*} 
when  
 $\eta\cos(\ell\delta^{1/2})+\widetilde{b}_1\ell\delta\leq \eta$; 
when  $\eta\cos(\ell\delta^{1/2})+\widetilde{b}_1\ell\delta> \eta$, 
let $II=\Phi(\eta\cos(\ell\delta^{1/2}))$. 
We note that if $\eta< \eta\cos(\ell\delta^{1/2})+\widetilde{b}_1\ell\delta$, 
then 
\begin{equation*} 
\widetilde{b}_1\ell\delta>\eta(1-\cos(\ell\delta^{1/2}))
\geq \eta(2/\pi^2)\ell^2\delta, 
\end{equation*} 
and hence $\ell<\widetilde{b}_1\eta^{-1}\pi^2/2$.  
\par 
We use 
\begin{equation*} 
\Phi'(\xi_1)=\beta\left(1-\frac{\xi_1^2}{\eta^2}\right)^{-1/2}
\left(-\frac{\xi_1}{\eta^2} \right).  
\end{equation*} 
By the mean value theorem, it follows that 
\begin{equation} \label{e.3.4} 
|I|\leq \widetilde{b}_1\ell\delta\beta(1-\cos^2(\ell\delta^{1/2}))^{-1/2}
\eta^{-1}
\leq \widetilde{b}_1\ell\delta\beta \sin(\ell\delta^{1/2})^{-1} 
\leq (\pi/2)\widetilde{b}_1\beta\delta^{1/2}.  
\end{equation}  
\par   
Obviously, we see that  
\begin{equation} \label{e.3.5}
|II| \leq \Phi(\eta\cos(\ell\delta^{1/2}))=\beta\sin(\ell\delta^{1/2})\leq 
\beta\ell\delta^{1/2}. 
\end{equation} 
If $\ell\geq \widetilde{b}_1\eta^{-1}\pi^2/2$ and so 
$\eta\cos(\ell\delta^{1/2})+\widetilde{b}_1\ell\delta\leq \eta$, then 
applying the mean value theorem, we see that  
\begin{align*} 
|II|&\leq \beta\widetilde{b}_1\ell\delta\left(1-\eta^{-2}(\eta\cos(\ell\delta^{1/2})
+\widetilde{b}_1\ell\delta)^2\right)^{-1/2} 
\\ 
&= \beta\widetilde{b}_1\ell\delta\left(1-(\cos^2(\ell\delta^{1/2})
+2\eta^{-1}\widetilde{b}_1\ell\delta\cos(\ell\delta^{1/2}) 
+\eta^{-2}(\widetilde{b}_1)^2(\ell\delta)^2)\right)^{-1/2} 
\\ 
&= \beta\widetilde{b}_1\ell\delta\left(\sin^2(\ell\delta^{1/2})
-2\eta^{-1}\widetilde{b}_1\ell\delta\cos(\ell\delta^{1/2}) 
-\eta^{-2}(\widetilde{b}_1)^2(\ell\delta)^2\right)^{-1/2} 
\\ 
&\leq \beta\widetilde{b}_1\ell\delta\left((2/\pi)^2\ell^2\delta 
-(2\eta^{-1}\widetilde{b}_1  
+\eta^{-2}(\widetilde{b}_1)^2)\ell\delta\right)^{-1/2}=:J,   
\end{align*} 
where we assume that $\ell\geq 2(\pi/2)^2(2\eta^{-1}\widetilde{b}_1  
+\eta^{-2}(\widetilde{b}_1)^2)=:C_0$. 
Then,  we see that  
\begin{equation}\label{e.3.6} 
|II|\leq J\leq \beta\widetilde{b}_1\ell\delta\left(2^{-1}(2/\pi)^2\ell^2
\delta\right)^{-1/2}=2^{-1/2}\pi\beta\widetilde{b}_1\delta^{1/2}. 
\end{equation} 
By \eqref{e.3.5} and \eqref{e.3.6}, noting that 
$C_0 \geq \widetilde{b}_1\eta^{-1}\pi^2/2$,  we have 
\begin{equation}\label{e.3.7} 
|II| \leq 
\beta(C_0 + 2^{-1/2}\pi\widetilde{b}_1)\delta^{1/2}. 
\end{equation} 
\par 
By \eqref{e.3.2}, \eqref{e.3.3}, \eqref{e.3.4} and \eqref{e.3.7}, we can 
prove Lemma \ref{lem3.1} as follows. 
First, by \eqref{e.3.2}, \eqref{e.3.3}, we have 
\begin{multline}\label{e.3.0}  
A(|p_{\ell, \delta}|-b_1\ell \delta, |p_{\ell, \delta}|+b_1\ell \delta)
\cap \widetilde{\mathscr E}(\eta',\eta'') 
\\ 
\subset  [\eta\cos(\ell\delta^{1/2})- b_2\ell\delta, 
\eta\cos(\ell\delta^{1/2})+ b_2\ell\delta]\times \Bbb R 
\end{multline} 
for some $b_2>0$ under the condition  $h+b_1\ell\delta \leq \eta$. 
If $h+b_1\ell\delta > \eta$, we easily see that 
\begin{equation*} 
2hb_1\ell\delta +b_1^2\ell^2\delta^2>(\eta^2-\beta^2)\sin^2(\ell\delta^{1/2}), 
\end{equation*}  
which implies that $\ell\leq C$ for some constant $C$. Using this, we have 
\begin{align}\label{e.3.-1} 
|h-\eta\cos(\ell\delta^{1/2})| &\leq h^2-\eta^2\cos^2(\ell\delta^{1/2}) 
\\ 
&=(\eta'-\eta'')^2\sin^2(\ell\delta^{1/2})
\leq C_1\ell^2\delta \leq C_1C\ell\delta.       \notag 
\end{align}  
Also, when $h+b_1\ell\delta > \eta$, by \eqref{e.3.2} 
we see that 
\begin{equation}\label{e.3.-2}  
A(|p_{\ell, \delta}|-b_1\ell \delta, |p_{\ell, \delta}|+b_1\ell \delta)
\cap \widetilde{\mathscr E}(\eta',\eta'')  
\subset  [\eta\cos(\ell\delta^{1/2})- b_2\ell\delta, 
h+b_1\ell\delta]\times \Bbb R.  
\end{equation}  
By \eqref{e.3.-1} and \eqref{e.3.-2}, we also have \eqref{e.3.0} for some $b_2$ when $h+b_1\ell\delta > \eta$.  
\par 
Next, by \eqref{e.3.4} and \eqref{e.3.7} with 
$\widetilde{b}_1=b_2$ and \eqref{e.3.0} we have 
\begin{align*} 
&A(|p_{\ell, \delta}|-b_1\ell \delta, |p_{\ell, \delta}|+b_1\ell \delta)
\cap \widetilde{\mathscr E}(\eta',\eta'') 
\\
&\subset \left([\eta\cos(\ell\delta^{1/2})- b_2\ell\delta, 
\eta\cos(\ell\delta^{1/2})+ b_2\ell\delta]
\times \Bbb R\right)\cap \widetilde{\mathscr E}(\eta',\eta'') 
\\ 
&\subset  [\eta\cos(\ell\delta^{1/2})- b_2\ell\delta, 
\eta\cos(\ell\delta^{1/2})+ b_2\ell\delta] 
\\ 
&\phantom{\subset} \times 
[\beta\sin(\ell\delta^{1/2})-b_3\delta^{1/2}, 
\beta\sin(\ell\delta^{1/2})+b_3\delta^{1/2}] 
\end{align*} 
for some positive constant $b_3$.  
This proves Lemma \ref{lem3.1} when  $\eta'\geq \eta''$. 
\par 
The case $\eta'\leq \eta''$ can be handled similarly. 
This completes the proof of Lemma \ref{lem3.1}. 
\end{proof}  
\par 
We also need the following lemmas (Lemmas \ref{lem3.2}, \ref{lem3.3+} and 
\ref{lem3.4}) in proving Lemma \ref{lem1.6}.  

\begin{lemma} \label{lem3.2}  
Let $\ell \in \Bbb N_0^*\cap [0,\delta^{-1/2}(\pi/8)-1]$. 
Let $c_1$, $c_2$ be positive constants.
Then, there exist  constants 
$c_3, c_4>0$ depending on $c_1, c_2$ such that 
\begin{equation*} 
R(p_{\ell, \delta}; c_1\ell_*\delta, c_2\delta^{1/2}) \subset 
A(|p_{\ell, \delta}|-c_3\ell_*\delta, |p_{\ell, \delta}|+c_4\ell_*\delta),  
\end{equation*} 
where $p_{\ell, \delta}=p_{\ell, \delta}(\eta', \eta'')$ and 
$\ell_*=\max(\ell,1/2)$.  
\end{lemma}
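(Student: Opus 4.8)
The plan is to reduce everything to an elementary expansion of $|\xi|^2$ together with the observation that the long side of the rectangle points in an essentially tangential direction, so that it contributes to the variation of $|\xi|$ only at the favourable order $\ell_*\delta$. Write $p=p_{\ell,\delta}(\eta',\eta'')=\big(\eta\cos(\ell\delta^{1/2}),(\eta'-\eta'')\sin(\ell\delta^{1/2})\big)$ with $\eta=\eta'+\eta''$. First I would record the a priori bounds to be used throughout: since $\eta',\eta''\in[1-\tau^{-1},2+\tau^{-1}]$ one has $1\le\eta\le5$ and $|\eta'-\eta''|\le2$; since $\ell\le\delta^{-1/2}(\pi/8)-1$ one has $\ell\delta^{1/2}<\pi/8$, whence $\cos(\ell\delta^{1/2})\ge\cos(\pi/8)>0$ and $|p|\ge\eta\cos(\ell\delta^{1/2})\ge(2-2\tau^{-1})\cos(\pi/8)>1$ for $\tau>10^6$; finally, with $\ell_*=\max(\ell,1/2)$ and $\delta=\tau^{-1}$ small, $\tfrac12\le\ell_*\le\delta^{-1/2}(\pi/8)$, so that $\ell_*\delta\le(\pi/8)\delta^{1/2}$ and $\delta\le2\ell_*\delta$.

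The main step is the following computation. An arbitrary point of $R(p;c_1\ell_*\delta,c_2\delta^{1/2})$ can be written $\xi=p+v$ with $v=(v_1,v_2)$, $|v_1|\le c_1\ell_*\delta$, $|v_2|\le c_2\delta^{1/2}$, and then
\[
|\xi|^2-|p|^2=2\langle p,v\rangle+|v|^2 .
\]
For the linear term, $\langle p,v\rangle=\eta\cos(\ell\delta^{1/2})\,v_1+(\eta'-\eta'')\sin(\ell\delta^{1/2})\,v_2$; I would bound the first summand by $\eta c_1\ell_*\delta\le5c_1\ell_*\delta$ and, using $\sin(\ell\delta^{1/2})\le\ell\delta^{1/2}\le\ell_*\delta^{1/2}$ and $|v_2|\le c_2\delta^{1/2}$, the second summand by $|\eta'-\eta''|\,c_2\ell_*\delta\le2c_2\ell_*\delta$, obtaining $|\langle p,v\rangle|\le(5c_1+2c_2)\ell_*\delta$. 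This is the crucial point: the $\delta^{1/2}$–side of the rectangle enters $\langle p,v\rangle$ only at order $\ell\delta$, because the second coordinate $(\eta'-\eta'')\sin(\ell\delta^{1/2})$ of $p$ is itself of size $O(\ell\delta^{1/2})$. For the quadratic term, $|v|\le|v_1|+|v_2|\le(c_1\pi/8+c_2)\delta^{1/2}$, hence $|v|^2\le(c_1\pi/8+c_2)^2\delta\le2(c_1\pi/8+c_2)^2\ell_*\delta$.

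Combining the two bounds gives $\big||\xi|^2-|p|^2\big|\le C\ell_*\delta$ with $C=2(5c_1+2c_2)+2(c_1\pi/8+c_2)^2$, a constant depending only on $c_1,c_2$. Since $|\xi|+|p|\ge|p|\ge1$, this yields
\[
\big||\xi|-|p|\big|=\frac{\big||\xi|^2-|p|^2\big|}{|\xi|+|p|}\le C\ell_*\delta ,
\]
that is $|p|-C\ell_*\delta\le|\xi|\le|p|+C\ell_*\delta$, which is precisely the asserted inclusion with $c_3=c_4=C$.

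I do not expect a genuine obstacle in this lemma; the two points deserving attention are: (i) making sure the constants depend only on $c_1,c_2$ and on the universal size bounds for $\eta',\eta''$, and not on $\delta$ or $\ell$ — this is exactly why one replaces $\delta$ by $2\ell_*\delta$ and estimates $\sin(\ell\delta^{1/2})$ by $\ell\delta^{1/2}$ rather than by $1$; and (ii) the degenerate case $\ell=0$, where $\ell_*=1/2$, $p=(\eta,0)$ and $\sin(\ell\delta^{1/2})=0$, so that the second coordinate of $v$ drops out of $\langle p,v\rangle$ entirely and the same estimates go through verbatim.
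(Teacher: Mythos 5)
Your proof is correct and follows essentially the same route as the paper: both expand $|\xi|^2-|p_{\ell,\delta}|^2$, exploit that the second coordinate of $p_{\ell,\delta}$ is $O(\ell\delta^{1/2})$ so the $\delta^{1/2}$-side of the rectangle contributes only at order $\ell_*\delta$, and then use the lower bound on $|p_{\ell,\delta}|$ (the paper uses $\alpha\geq c>0$) to pass from squares to the annulus inclusion. No gaps; your constants are slightly cruder but equally valid.
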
 
\begin{proof}
We write $(\alpha,\beta)$ for $p_{\ell,\delta}$.   Let 
$(\alpha+\epsilon_1,\beta+\epsilon_2)\in 
R(p_{\ell, \delta}; c_1\ell_*\delta, c_2\delta^{1/2})$. Then 
$|\epsilon_1|\leq c_1\ell_* \delta$, $|\epsilon_2|\leq c_2 \delta^{1/2}$.  
To prove the lemma, it suffices to show that 
\begin{equation*} 
\left|\sqrt{\alpha^2+\beta^2} - 
\sqrt{(\alpha+\epsilon_1)^2+(\beta+\epsilon_2)^2}\right|\leq c_0\ell_*\delta 
\end{equation*} 
for some $c_0>0$.  Since $\alpha\geq c>0$, this follows from the estimate 
\begin{equation} \label{e.3.8} 
\left|(\alpha^2+\beta^2) - 
\left((\alpha+\epsilon_1)^2+(\beta+\epsilon_2)^2\right)\right|
\leq c_0'\ell_*\delta.  
\end{equation} 
Now we see that $|\alpha|\leq 5$, $|\beta|\leq 
|\eta'-\eta''|\ell \delta^{1/2}$, $|\eta'-\eta''|\leq 3/2$ and 
\begin{align*} 
&\left|(\alpha^2+\beta^2) - 
\left((\alpha+\epsilon_1)^2+(\beta+\epsilon_2)^2\right)\right|
=\left|2\alpha\epsilon_1+\epsilon_1^2+2\beta\epsilon_2+\epsilon_2^2\right| 
\\ 
&\leq 10c_1\ell_*\delta +(c_1\ell_*\delta)^2
+ 2|\eta'-\eta''|\ell\delta^{1/2}c_2\delta^{1/2} +c_2^2\delta 
\\ 
&\leq 10c_1\ell_*\delta +(c_1\ell_*\delta)^2
+ 3c_2\ell_*\delta +c_2^2\delta 
\\ 
&\leq (10c_1 +c_1^2+ 3c_2 +2c_2^2)\ell_*\delta.  
\end{align*}  
This proves \eqref{e.3.8} and hence completes the proof of 
Lemma \ref{lem3.2}. 
\end{proof}  
\par 
Let $\eta' \in \widetilde{J_1}$, 
$\eta''\in \widetilde{J_2}$,  
$\eta'+\eta''=\eta$.    
Let $\ell \in \Bbb N_0^*$ and  
\begin{equation*}  
p_{\ell, \delta}(\eta',\eta'')= 
(\eta\cos(\ell\delta^{1/2}), (\eta'-\eta'')\sin(\ell\delta^{1/2})).  
\end{equation*} 
Let  
\begin{equation*} 
R_{\ell, \delta}(\eta',\eta'') 
=p_{\ell, \delta}(\eta',\eta'')  + 
R_0(c_1\ell_*\delta, (c_2+4)\delta^{1/2}),    
\end{equation*} 
where $c_1, c_2$ are as in \eqref{e.3.1+} and we recall that 
$\ell_*=\max(\ell,1/2)$.  

\begin{lemma} \label{lem3.3+} 
If $\eta', \eta_0'\in \widetilde{J_1}$, $\eta'', \eta_0''\in \widetilde{J_2}$, 
$\eta'+\eta''=\eta$, $\eta_0'+\eta_0''=\eta$, 
then 
\begin{equation*} 
\widetilde{\mathscr E}_a(\eta',\eta'')\cap R_{\ell,\delta}(\eta_0',\eta_0'') 
\subset 
\mathcal R_{a\delta^{1/2}} R(p_{\ell, \delta}(\eta',\eta''); 
c_1'\ell_*\delta, c_2'\delta^{1/2})   
\end{equation*} 
for some positive constants $c_1', c_2'$ independent of $a$, $\delta$ and 
$\ell$, where $\widetilde{\mathscr E}_a(\eta',\eta'')
=\mathcal R_{a\delta^{1/2}} \widetilde{\mathscr E}(\eta',\eta'')$ 
for $a\in \Bbb R$.    
\end{lemma}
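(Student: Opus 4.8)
The plan is to derive Lemma~\ref{lem3.3+} by chaining Lemma~\ref{lem3.2} with Lemma~\ref{lem3.1}. The one structural point that makes this possible is that an annulus $A(\alpha,\beta)$ centered at the origin is invariant under every rotation $\mathcal R_{a\delta^{1/2}}$, so the angle $a\delta^{1/2}$ --- which may be as large as roughly $\pi/8$ --- causes no damage when it is passed through an annular enclosure rather than a rectangular one. First I would record that the two base points are close: since $\eta'+\eta''=\eta=\eta_0'+\eta_0''$, the points $p_{\ell,\delta}(\eta',\eta'')$ and $p_{\ell,\delta}(\eta_0',\eta_0'')$ have the same first coordinate $\eta\cos(\ell\delta^{1/2})$, while their second coordinates differ by $\bigl((\eta'-\eta'')-(\eta_0'-\eta_0'')\bigr)\sin(\ell\delta^{1/2})=2(\eta'-\eta_0')\sin(\ell\delta^{1/2})$, where I used $\eta'-\eta_0'=-(\eta''-\eta_0'')$. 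Since $|\eta'-\eta_0'|\le|\widetilde{J_1}|\le\tau^{-1/2}+2\tau^{-1}$ and $0\le\sin(\ell\delta^{1/2})\le\ell_*\delta^{1/2}$, this yields $\bigl|p_{\ell,\delta}(\eta',\eta'')-p_{\ell,\delta}(\eta_0',\eta_0'')\bigr|\le c_5\ell_*\delta$, and in particular $\bigl||p_{\ell,\delta}(\eta',\eta'')|-|p_{\ell,\delta}(\eta_0',\eta_0'')|\bigr|\le c_5\ell_*\delta$, with $c_5$ an absolute constant.

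Next I would run the chain of inclusions. Applying Lemma~\ref{lem3.2} to $R_{\ell,\delta}(\eta_0',\eta_0'')=p_{\ell,\delta}(\eta_0',\eta_0'')+R_0(c_1\ell_*\delta,(c_2+4)\delta^{1/2})$, with $c_1$ and $c_2+4$ in the roles of the two constants there, produces $c_3,c_4>0$ (depending only on the fixed $c_1,c_2$ of \eqref{e.3.1+}) with $R_{\ell,\delta}(\eta_0',\eta_0'')\subset A(|p_{\ell,\delta}(\eta_0',\eta_0'')|-c_3\ell_*\delta,\,|p_{\ell,\delta}(\eta_0',\eta_0'')|+c_4\ell_*\delta)$. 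Combining this with the previous bound and setting $b_1=\max(c_3,c_4)+c_5$, I obtain $R_{\ell,\delta}(\eta_0',\eta_0'')\subset A(|p_{\ell,\delta}(\eta',\eta'')|-b_1\ell_*\delta,\,|p_{\ell,\delta}(\eta',\eta'')|+b_1\ell_*\delta)$. To apply Lemma~\ref{lem3.1} with this $b_1$ I need $|p_{\ell,\delta}(\eta',\eta'')|-b_1\ell_*\delta>|\eta'-\eta''|$; this holds since $|p_{\ell,\delta}(\eta',\eta'')|\ge\eta\cos(\ell\delta^{1/2})\ge(2-2\tau^{-1})\cos(\pi/8)$ while $|\eta'-\eta''|\le1+2\tau^{-1}$ and $b_1\ell_*\delta\le b_1(\pi/8)\delta^{1/2}$, so for $\tau>10^6$ the gap is comfortable. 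Lemma~\ref{lem3.1} then gives absolute constants $b_2,b_3>0$ with $A(|p_{\ell,\delta}(\eta',\eta'')|-b_1\ell_*\delta,\,|p_{\ell,\delta}(\eta',\eta'')|+b_1\ell_*\delta)\cap\widetilde{\mathscr E}(\eta',\eta'')\subset R(p_{\ell,\delta}(\eta',\eta'');b_2\ell_*\delta,b_3\delta^{1/2})$. Finally, for $x\in\widetilde{\mathscr E}_a(\eta',\eta'')\cap R_{\ell,\delta}(\eta_0',\eta_0'')$, the point $\mathcal R_{-a\delta^{1/2}}x$ lies in $\widetilde{\mathscr E}(\eta',\eta'')$, and since $x$ lies in the origin-centered annulus just displayed, so does $\mathcal R_{-a\delta^{1/2}}x$; hence $\mathcal R_{-a\delta^{1/2}}x\in R(p_{\ell,\delta}(\eta',\eta'');b_2\ell_*\delta,b_3\delta^{1/2})$, i.e. $x\in\mathcal R_{a\delta^{1/2}}R(p_{\ell,\delta}(\eta',\eta'');b_2\ell_*\delta,b_3\delta^{1/2})$, which is the assertion with $c_1'=b_2$, $c_2'=b_3$.

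I do not expect a real obstacle. The base-point comparison is elementary, and the size condition for Lemma~\ref{lem3.1} is met with room to spare because $|p_{\ell,\delta}(\eta',\eta'')|>1.8$ while $|\eta'-\eta''|<1.01$. The one thing that genuinely needs care is the bookkeeping of constants: I must make sure that $b_1$, and hence $b_2$ and $b_3$, are assembled only from the fixed constants $c_1,c_2$ of \eqref{e.3.1+} and the absolute constant $c_5$, so that the final $c_1',c_2'$ are independent of $a$, $\delta$ and $\ell$ as the statement requires; this is exactly why Lemma~\ref{lem3.2} is invoked with $c_2+4$ (a fixed quantity) and why the annulus is symmetrized to the form demanded by Lemma~\ref{lem3.1}.
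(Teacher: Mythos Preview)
Your proof is correct and follows essentially the same route as the paper: enclose $R_{\ell,\delta}(\eta_0',\eta_0'')$ in an annulus centered at $|p_{\ell,\delta}(\eta',\eta'')|$ via Lemma~\ref{lem3.2}, use rotation invariance of the annulus, and finish with Lemma~\ref{lem3.1}. The only cosmetic difference is the order in which the base-point change is made: the paper first absorbs $R_{\ell,\delta}(\eta_0',\eta_0'')$ into a rectangle centered at $p_{\ell,\delta}(\eta',\eta'')$ (using the crude bound $|p_{\ell,\delta}(\eta',\eta'')-p_{\ell,\delta}(\eta_0',\eta_0'')|\le 4\delta^{1/2}$ in the second coordinate) and then applies Lemma~\ref{lem3.2}, whereas you apply Lemma~\ref{lem3.2} at $p_{\ell,\delta}(\eta_0',\eta_0'')$ and shift the annulus center afterward with your sharper $c_5\ell_*\delta$ bound; either way one lands in the same annulus and the argument concludes identically.
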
 
We have 
\begin{equation}\label{e.3.13++}  
R_{\ell,\delta}(\eta',\eta'')\cup  
R(p_{\ell, \delta}(\eta',\eta''); c_1'\ell_*\delta, c_2'\delta^{1/2}) 
\subset B(\eta, \ell, \delta, c_3) 
\end{equation}  
for all $\eta'\in \widetilde{J_1}$, $\eta''\in \widetilde{J_2}$ satisfying 
$\eta'+\eta''=\eta$ with some positive number $c_3$, where 
\begin{equation*} 
B(\eta, \ell, \delta, c_3)=\bigcup_{\substack{\eta'+\eta''=\eta, \\ 
\eta'\in \widetilde{J_1}, 
\eta''\in \widetilde{J_2}}} 
B(p_{\ell,\delta}(\eta',\eta''), c_3\delta^{1/2}).  
\end{equation*} 
Here $B(x,r)$ denotes a ball with radius $r$ centered at $x$. 
We may assume that $c_3\delta^{1/2}$ is small enough so that 
\begin{enumerate} 
\item 
\begin{equation*} 
\mathcal R_{\sigma}B(\eta, \ell, \delta, c_3)\subset 
D=\{\xi\in \Bbb R^2: 3/2\leq |\xi|\leq 9/2, \xi_1\geq 0\} 
\end{equation*} 
for $|\sigma|\leq \pi/4$; 
\item there exists $a_0>0$ independent of $\eta$, $\ell$, $\delta$ 
such that if $a_0 \leq |a|\leq (\pi/4)\delta^{-1/2}$, then 
\begin{equation} \label{e.3.13+}  
B(\eta, \ell, \delta, c_3) \cap 
\mathcal R_{a\delta^{1/2}}B(\eta, \ell, \delta, c_3)=\emptyset.  
\end{equation} 
\end{enumerate}  

\begin{proof}[Proof of Lemma $\ref{lem3.3+}$]  
We note that  
$|\eta'-\eta_0'|<2\delta^{1/2}$, $|\eta''-\eta_0''|<2\delta^{1/2}$. 
Thus 
\begin{equation*} 
  R_{\ell,\delta}(\eta_0',\eta_0'') \subset 
 R(p_{\ell, \delta}(\eta',\eta''); 
c_1\ell_*\delta, (c_2+8)\delta^{1/2}).   
\end{equation*} 
So, by Lemma \ref{lem3.2} we have 
\begin{equation*} 
R_{\ell,\delta}(\eta_0',\eta_0'')  \subset  R(p_{\ell, \delta}; 
c_1\ell_*\delta, (c_2+8)\delta^{1/2}) 
\subset 
A(|p_{\ell, \delta}|-c\ell_*\delta, |p_{\ell, \delta}|+c\ell_*\delta)  
\end{equation*} 
for some $c>0$, where $p_{\ell, \delta}=p_{\ell, \delta}(\eta',\eta'')$.  Thus 
\begin{equation*} 
\widetilde{\mathscr E}_a(\eta',\eta'')\cap 
R_{\ell,\delta}(\eta_0',\eta_0'') \subset 
\widetilde{\mathscr E}_a(\eta',\eta'')\cap 
A(|p_{\ell, \delta}|-c\ell_*\delta, |p_{\ell, \delta}|+c\ell_*\delta).   
\end{equation*} 
By Lemma \ref{lem3.1} and the rotation invariance of annulus, we see that 
\begin{equation*} 
\widetilde{\mathscr E}_a(\eta',\eta'')\cap 
A(|p_{\ell, \delta}|-c\ell_* \delta, |p_{\ell, \delta}|+c\ell_* \delta)
 \subset  
\mathcal R_{a\delta^{1/2}} R(p_{\ell, \delta}; c'\ell_*\delta, c'\delta^{1/2}) 
\end{equation*} 
for some $c'>0$. Combining results, we have 
\begin{equation*} 
\widetilde{\mathscr E}_a(\eta',\eta'')\cap  
R_{\ell,\delta}(\eta_0',\eta_0'')  \subset 
\mathcal R_{a\delta^{1/2}} R(p_{\ell, \delta}; 
c'\ell_*\delta, c'\delta^{1/2}).  
\end{equation*} 
This completes the proof of Lemma \ref{lem3.3+}. 
\end{proof}   
\par 
Let 
\begin{equation*} 
\widetilde{E}^\eta_{00}=\bigcup_{\substack{\ell, \eta'+\eta''=\eta, \\ 
\eta'\in \widetilde{J_1}, 
\eta''\in \widetilde{J_2}}} R(p_{\ell,\delta}(\eta',\eta''); 
c_1\ell_*\delta, (c_2+4)\delta^{1/2})
=\bigcup_{\substack{\ell, \eta'+\eta''=\eta, \\ 
\eta'\in \widetilde{J_1}, 
\eta''\in \widetilde{J_2}}} R_{\ell,\delta}(\eta_,\eta''), 
\end{equation*} 
where $\ell$ ranges over a subset of $\Bbb N_0^*$ such that 
$0\leq \ell \leq \delta^{-1/2}(\pi/8)-1$ and $c_1, c_2$ are as in 
\eqref{e.3.1+}.

\begin{lemma} \label{lem3.4}  
Fix $\eta'\in \widetilde{J_1}$ and $\eta''\in \widetilde{J_2}$ with 
$\eta'+\eta''=\eta$. 
There exists $a_0>0$ independent of $\delta$ such that if 
$a_0\leq a \leq (\pi/8)\delta^{-1/2}$, then 
\begin{equation*} 
\widetilde{\mathscr E}_a(\eta',\eta'')\cap \widetilde{E}^\eta_{00}  
=\emptyset, \quad 
\mathcal R_{a\delta^{1/2}}(\widetilde{E}^\eta_{00})
\cap \widetilde{\mathscr E}(\eta',\eta'')=\emptyset.  
\end{equation*} 
\end{lemma}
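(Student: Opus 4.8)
The plan is to reduce Lemma \ref{lem3.4} to a statement about the ``core'' curves $\widetilde{\mathscr E}(\eta',\eta'')$ and the ball families $B(\eta,\ell,\delta,c_3)$, using the inclusions already assembled. First I would observe that, by the definition of $\widetilde E^\eta_{00}$ together with \eqref{e.3.13++}, every rectangle $R_{\ell,\delta}(\eta_0',\eta_0'')$ appearing in the union defining $\widetilde E^\eta_{00}$ is contained in $B(\eta,\ell,\delta,c_3)$; likewise the rectangle $R(p_{\ell,\delta}(\eta',\eta'');c_1'\ell_*\delta,c_2'\delta^{1/2})$ produced by Lemma \ref{lem3.3+} lies in $B(\eta,\ell,\delta,c_3)$. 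So it suffices to show that, for $a_0\le a\le(\pi/8)\delta^{-1/2}$ with $a_0$ suitably large, the curve $\widetilde{\mathscr E}_a(\eta',\eta'')$ cannot meet any $R_{\ell,\delta}(\eta_0',\eta_0'')$, and symmetrically that $\mathcal R_{a\delta^{1/2}}(\widetilde E^\eta_{00})$ misses $\widetilde{\mathscr E}(\eta',\eta'')$.

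Next I would split on the size of $\ell$. For $\ell$ with $\ell_*\delta^{1/2}$ bounded away from zero — say $\ell\ge a_0/2$ — the rectangle $R_{\ell,\delta}(\eta_0',\eta_0'')$ sits at angular distance roughly $\ell\delta^{1/2}$ from the positive $\xi_1$-axis (this is the content of $p_{\ell,\delta}(\eta_0',\eta_0'')=(\eta\cos(\ell\delta^{1/2}),(\eta_0'-\eta_0'')\sin(\ell\delta^{1/2}))$), while $\widetilde{\mathscr E}_a(\eta',\eta'')$ is the rotate by $a\delta^{1/2}$ of a curve that hugs the positive $\xi_1$-axis to within angle $O(\delta^{1/2})$ in the relevant annulus $\{3/2\le|\xi|\le 9/2\}$. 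For $a\ge a_0$, the angular separation between the two is of order $a\delta^{1/2}$ minus $O(\ell\delta^{1/2})$; but here one must be careful because $\ell$ can be comparable to $a$ or larger. The cleaner route is to use the ball language: by Lemma \ref{lem3.3+}, if $\widetilde{\mathscr E}_a(\eta',\eta'')\cap R_{\ell,\delta}(\eta_0',\eta_0'')\ne\emptyset$, then $\widetilde{\mathscr E}_a(\eta',\eta'')$ meets $\mathcal R_{a\delta^{1/2}}R(p_{\ell,\delta}(\eta',\eta'');c_1'\ell_*\delta,c_2'\delta^{1/2})\subset \mathcal R_{a\delta^{1/2}}B(\eta,\ell,\delta,c_3)$; on the other hand the unrotated curve $\widetilde{\mathscr E}(\eta',\eta'')$ passes through $p_{\ell,\delta}(\eta',\eta'')$ for each admissible $\ell$, so it meets $B(\eta,\ell,\delta,c_3)$. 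Hence a nonempty intersection forces $B(\eta,\ell,\delta,c_3)\cap\mathcal R_{a\delta^{1/2}}B(\eta,\ell,\delta,c_3)\ne\emptyset$, which by \eqref{e.3.13+} is impossible once $a_0\le|a|\le(\pi/4)\delta^{-1/2}$. This handles every $\ell$ at once and is exactly the point of having introduced the balls $B(\eta,\ell,\delta,c_3)$ and the separation property \eqref{e.3.13+}.

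For the second assertion, $\mathcal R_{a\delta^{1/2}}(\widetilde E^\eta_{00})\cap\widetilde{\mathscr E}(\eta',\eta'')=\emptyset$, I would apply the rotation $\mathcal R_{-a\delta^{1/2}}$ to both sides, reducing it to $\widetilde E^\eta_{00}\cap\widetilde{\mathscr E}_{-a}(\eta',\eta'')=\emptyset$, which is the first assertion with $a$ replaced by $-a$; since the first assertion's proof via \eqref{e.3.13+} is symmetric in the sign of $a$ (the displayed disjointness there is stated for $a_0\le|a|\le(\pi/4)\delta^{-1/2}$), this is immediate. One should also dispose of the small-$\ell$ contribution, $\ell=0$ and $\ell=1/2$: there $R_{0,\delta}(\eta_0',\eta_0'')$ is an $O(\delta)\times O(\delta^{1/2})$ rectangle around $(\eta,0)$, still inside $B(\eta,0,\delta,c_3)$, so the same ball argument applies verbatim.

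The main obstacle I anticipate is not the logical skeleton — which is forced by the preceding lemmas — but verifying cleanly that the range of $a$ in Lemma \ref{lem3.4}, namely $a_0\le a\le(\pi/8)\delta^{-1/2}$, is consistent with the range $a_0\le|a|\le(\pi/4)\delta^{-1/2}$ in which \eqref{e.3.13+} was guaranteed, and that $a_0$ can indeed be chosen independently of $\delta$, $\eta$, and $\ell$ simultaneously. This hinges on the geometric fact that for $|\sigma|$ up to $\pi/4$ the rotated ball-union $\mathcal R_\sigma B(\eta,\ell,\delta,c_3)$ stays in the sector $D=\{3/2\le|\xi|\le 9/2,\ \xi_1\ge 0\}$ (property (1) after \eqref{e.3.13++}), so that the only way two such sets at angular offset $a\delta^{1/2}$ can overlap is if that offset is $O(\delta^{1/2})$, i.e. $a\le a_0$; making the constant $a_0$ uniform is exactly the assertion of \eqref{e.3.13+}, so in the end the proof is a careful bookkeeping of constants rather than a new estimate.
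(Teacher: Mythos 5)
Your proposal is correct and is essentially the paper's own argument: you reduce to a single rectangle $R_{\ell,\delta}(\eta_0',\eta_0'')$ from the union defining $\widetilde{E}^\eta_{00}$, use Lemma \ref{lem3.3+} together with \eqref{e.3.13++} to place any intersection point of $\widetilde{\mathscr E}_a(\eta',\eta'')$ with that rectangle inside $\mathcal R_{a\delta^{1/2}}B(\eta,\ell,\delta,c_3)$, note that the same point lies in $R_{\ell,\delta}(\eta_0',\eta_0'')\subset B(\eta,\ell,\delta,c_3)$, and conclude from the separation property \eqref{e.3.13+} (whose range $a_0\leq |a|\leq (\pi/4)\delta^{-1/2}$ covers $a\leq(\pi/8)\delta^{-1/2}$ and both signs, so the second assertion follows by rotating by $-a\delta^{1/2}$, exactly as in the paper). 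The only cosmetic slip is the sentence inferring $B(\eta,\ell,\delta,c_3)\cap\mathcal R_{a\delta^{1/2}}B(\eta,\ell,\delta,c_3)\neq\emptyset$ from the remark that the unrotated curve passes through $p_{\ell,\delta}(\eta',\eta'')$; the correct (and clearly intended) justification, already contained in your first paragraph, is that the intersection point itself lies in the rectangle and hence in $B(\eta,\ell,\delta,c_3)$.
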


\begin{proof} 
Let $\eta_0'\in \widetilde{J_1}$ and $\eta_0''\in \widetilde{J_2}$ with 
$\eta_0'+\eta_0''=\eta$. 
We first show that   
\begin{equation} \label{e.3.9} 
\widetilde{\mathscr E}_a(\eta',\eta'')\cap R_{\ell,\delta}(\eta_0',\eta_0'') 
=\emptyset,  
\end{equation} 
if $a_0\leq a \leq (\pi/8)\delta^{-1/2}$ and $a_0$ is sufficiently large, 
where $R_{\ell,\delta}(\eta_0',\eta_0'')$ is as in Lemma \ref{lem3.3+}.  
By Lemma \ref{lem3.3+} and \eqref{e.3.13++}, we have 
\begin{equation*} 
\widetilde{\mathscr E}_a(\eta',\eta'')\cap R_{\ell,\delta}(\eta_0',\eta_0'') 
\subset \mathcal R_{a\delta^{1/2}} B(\eta, \ell, \delta, c_3).    
\end{equation*} 
Since $R_{\ell,\delta}(\eta_0',\eta_0'') \subset  B(\eta, \ell, \delta, c_3)$, 
by \eqref{e.3.13+} we have \eqref{e.3.9} 
if $a_0\leq a \leq (\pi/8)\delta^{-1/2}$ and 
$a_0$ is as in \eqref{e.3.13+}, 
 from which it can be deduced that 
$\widetilde{\mathscr E}_a(\eta',\eta'')\cap \widetilde{E}^\eta_{00} 
=\emptyset$ as claimed. 
\par 
Next, we prove that 
$\mathcal R_{a\delta^{1/2}}(\widetilde{E}^\eta_{00})
\cap \widetilde{\mathscr E}(\eta',\eta'')=\emptyset$ if 
$a_0\leq a \leq (\pi/8)\delta^{-1/2}$ and $a_0$ is sufficiently large. 
This follows from $\widetilde{E}^\eta_{00}
\cap \widetilde{\mathscr E}_{-a}(\eta',\eta'')
=\emptyset$, which can be shown as above by using Lemma \ref{lem3.3+}, 
\eqref{e.3.13++} and \eqref{e.3.13+}. 
This completes the proof of Lemma \ref{lem3.4}. 
\end{proof}  

\begin{proof}[Proof of Lemma $\ref{lem1.6}$]  
Let 
\begin{equation*} 
\widetilde{E}^\eta_{0}=\bigcup_{\substack{\ell, \eta'+\eta''=\eta, \\ 
\eta'\in \widetilde{J_1}, 
\eta''\in \widetilde{J_2}}} R(p_{\ell,\delta}(\eta',\eta''); 
c_1\ell_*\delta, c_2\delta^{1/2}), 
\end{equation*} 
where the constants $c_1, c_2$ are as in \eqref{e.3.1+} and 
$\ell$ ranges over a subset of $\Bbb N_0^*$ such that 
$0\leq \ell \leq \delta^{-1/2}(\pi/8)-1$.  
We note that 
\begin{equation} \label{e.3.11} 
 E_\alpha^\eta \subset \mathcal R_{\alpha\delta^{1/2}} \widetilde{E}^\eta_0   
=\bigcup_{\substack{\ell, \eta'+\eta''=\eta, \\ 
\eta'\in \widetilde{J_1}, 
\eta''\in \widetilde{J_2}}} 
\mathcal R_{\alpha\delta^{1/2}} R(p_{\ell,\delta}(\eta',\eta''); 
c_1\ell_*\delta, c_2\delta^{1/2})   
\end{equation} 
(see \eqref{e.3.1+}). 
Recall that 
$D=\{\xi\in \Bbb R^2: 3/2\leq |\xi|\leq 9/2, \xi_1\geq 0\}$. 
We may assume that $\mathcal R_{\alpha\delta^{1/2}} \widetilde{E}^\eta_0 
\subset D$ for $|\alpha|\leq (\pi/4)\delta^{-1/2}$, $\eta=\eta'+\eta'', 
\eta'\in \widetilde{J_1}, \eta''\in \widetilde{J_2}$. 
\par 
Let $\ell, \ell' \in \Bbb N_0^*$ with 
$0\leq \ell, \ell'\leq \delta^{-1/2}(\pi/8)-1$, 
$\eta_0', \eta_1'\in \widetilde{J_1}$, $\eta_0'', \eta_1''\in \widetilde{J_2}$ 
with $\eta_0'+\eta_0''=\eta$,  $\eta_1'+\eta_1''=\eta$. 
Let $a, a'\in \mathcal I$. 
To prove Lemma \ref{lem1.6}, by \eqref{e.3.11} it suffices  to show that 
\begin{equation} \label{e.3.12} 
R(p_{\ell,\delta}(\eta_0',\eta_0''), c_1\ell_*\delta, c_2\delta^{1/2}) 
\cap \mathcal R_{(a-a')\delta^{1/2}}R(p_{\ell',\delta}(\eta_1',\eta_1''), 
c_1\ell_*'\delta, c_2\delta^{1/2})= \emptyset,   
\end{equation} 
if $a-a'$ is sufficiently large with $a-a'\leq (\pi/4)\delta^{-1/2}-2$. 
Let $b=a-a'$.  
We observe that for $\eta'\in \widetilde{J_1}$ and 
$\eta''\in \widetilde{J_2}$ with $\eta'+\eta''=\eta$, 
\begin{gather} 
p_{\ell,\delta}(\eta',\eta'') \in R(p_{\ell,\delta}(\eta_0',\eta_0''), 
c_1\ell_*\delta, (c_2+4)\delta^{1/2})=R_{\ell, \delta}(\eta_0',\eta_0''), 
\label{e.3.16+} 
\\ 
\mathcal R_{b\delta^{1/2}}p_{\ell',\delta}(\eta',\eta'')\in 
\mathcal R_{b\delta^{1/2}}R(p_{\ell',\delta}(\eta_1',\eta_1''), 
c_1\ell_*'\delta, (c_2+4)\delta^{1/2})
=\mathcal R_{b\delta^{1/2}}R_{\ell', \delta}(\eta_1',\eta_1'').    \notag 
\end{gather} 
Obviously \eqref{e.3.12} follows from 
\begin{equation} \label{e.3.17+} 
R_{\ell, \delta}(\eta_0',\eta_0'') \cap 
\mathcal R_{b\delta^{1/2}}R_{\ell', \delta}(\eta_1',\eta_1'')= \emptyset.  
\end{equation} 
\par 
Applying Lemma \ref{lem3.4}, we see that 
\begin{align}\label{e.3.18+} 
\widetilde{\mathscr E}_{b/2}(\eta',\eta'') \cap 
R_{\ell, \delta}(\eta_0',\eta_0'')
&=\emptyset, 
\\ 
\mathcal R_{b\delta^{1/2}}R_{\ell', \delta}(\eta_1',\eta_1'')
\cap \widetilde{\mathscr E}_{b/2}(\eta',\eta'') &=\emptyset \notag 
\end{align}
for a sufficiently large $b>0$, $0<b\leq (\pi/4)\delta^{-1/2}-2$.  
\par 
We can divide $D$ as 
$D\setminus \widetilde{\mathscr E}_{b/2}(\eta',\eta'')=D_1\cup D_2$ with  
$D_1\cap D_2=\emptyset$. 
Since $\mathscr E(\eta', \eta''), \mathscr E_b(\eta', \eta'') \subset 
D$ and, obviously, $\widetilde{\mathscr E}_{b/2}(\eta',\eta'')\cap 
\mathscr E(\eta', \eta'')=\emptyset$,  
$\widetilde{\mathscr E}_{b/2}(\eta',\eta'')\cap 
\mathscr E_b(\eta', \eta'')=\emptyset$, 
we may assume that $\mathscr E(\eta', \eta'') \subset D_1$ and 
 $\mathscr E_b(\eta', \eta'') \subset D_2$. 
Since 
$p_{\ell,\delta}(\eta',\eta'') \in \mathscr E(\eta',\eta'')$,  
 $\mathcal R_{b\delta^{1/2}}p_{\ell',\delta}(\eta',\eta'')\in 
\mathscr E_b(\eta',\eta'')$, we have  
$p_{\ell,\delta}(\eta',\eta'') \in D_1$ and  
 $\mathcal R_{b\delta^{1/2}}p_{\ell',\delta}(\eta',\eta'')\in D_2$.  
Thus by \eqref{e.3.16+} and \eqref{e.3.18+} we have 
$R_{\ell, \delta}(\eta_0',\eta_0'') \subset D_1$ 
and 
$\mathcal R_{b\delta^{1/2}}R_{\ell', \delta}(\eta_1',\eta_1'') 
\subset D_2$, 
which implies \eqref{e.3.17+}. 
This completes the proof of Lemma \ref{lem1.6}. 
\end{proof}   

\section{Applications of Theorem $\ref{th1.1}$}\label{s4}

Recall that 
\begin{align*}  
u_{\mu,J_i}=u_{\mu, J_i}^{(\delta)}&=\{(\xi,\eta)\in \Bbb R^2\times 
\Bbb R: |\eta-|\xi||\leq \delta, (\xi,|\xi|)\in U_{\mu,J_i}\} 
\\ 
&= \cup \{\{\xi\}\times [|\xi|-\delta, |\xi|+\delta]: 
(\xi,|\xi|)\in U_{\mu,J_i}\}  \quad i=1, 2,                                     \notag 
\end{align*} 
where $ \delta= \tau^{-1}$ is a small positive number, 
\begin{align*} 
U_{\mu,J_i}&=U_{\mu,J_i}^{(\delta)}=
\{(\xi,|\xi|)\in \Bbb R^2\times \Bbb R: \xi\in \Gamma_\mu, 
|\xi|\in J_i\}, 
\\ 
\Gamma_\mu&=\Gamma_\mu^{(\delta)}
=\{\xi \in \Bbb R^2: \mu\delta^{1/2}\leq \arg \xi <(\mu+1)\delta^{1/2}\},  
\quad \mu\in \mathcal N, 
\\ 
\mathcal N&=\mathcal N^{(\delta)}=\{\mu\in \Bbb Z: |\mu|\leq \frac{\pi}{8}
\delta^{-1/2} -1\},  
\\ 
J_i&=J_i^{(\delta)}=[\alpha_i, \beta_i]\subset [1, 2],  
\quad |J_i|\leq \delta^{1/2}, \quad i=1, 2. 
\end{align*} 
We also consider 
\begin{equation*} 
\mathcal N_*^{(\delta)}=\{\mu\in \Bbb Z: 
|\mu|\leq \frac{\pi}{7}\delta^{-1/2} -1\}. 
\end{equation*} 
For $\mu \in \mathcal N_*^{(\delta)}$, 
we define an enlargement $(u_{\mu, J_i}^{(\delta)})^*$ of 
$u_{\mu, J_i}^{(\delta)}$, i=1, 2,  by   
\begin{align*}  
(u_{\mu, J_i}^{(\delta)})^*&=\{(\xi,\eta)\in \Bbb R^2\times 
\Bbb R: |\eta-|\xi||\leq 6\delta, (\xi,|\xi|)\in (U_{\mu, J_i}^{(\delta)})^*\} 
\\ 
&= \cup \{\{\xi\}\times [|\xi|-6\delta, |\xi|+6\delta]: 
(\xi,|\xi|)\in (U_{\mu, J_i}^{(\delta)})^*\},                                  
\notag 
\end{align*} 
where 
\begin{align*} 
(U_{\mu, J_i}^{(\delta)})^*&=
\{(\xi,|\xi|)\in \Bbb R^2\times \Bbb R: \xi\in (\Gamma_\mu^{(\delta)})^*, 
|\xi|\in (J_i^{(\delta)})^* \}, 
\\ 
(\Gamma_\mu^{(\delta)})^* 
&=\{\xi \in \Bbb R^2: 
(\mu-1)\delta^{1/2}\leq \arg \xi <(\mu+2)\delta^{1/2}\},  
\\  
(J_i^{(\delta)})^*&=[\alpha_i-2\delta, \beta_i+2\delta]. 
\end{align*}
\par 
We have the following result by examining the proof of Theorem \ref{th1.1}. 

\begin{theorem} \label{th4.1}  There exists a positive constant $C$ 
independent of $J_1, J_2$ and $\delta$ such that 
\begin{equation*}  
\sum_{(\mu, \nu)\in (\mathcal N_*^{(\delta)})^2} 
\chi_{(u_{\mu,J_1}^{(\delta)})^*+(u_{\nu,J_2}^{(\delta)})^*}
\leq C.  
\end{equation*} 
\end{theorem}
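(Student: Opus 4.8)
The plan is to deduce Theorem \ref{th4.1} from Theorem \ref{th1.1}, or more precisely from a version of Theorem \ref{th1.1} with slightly relaxed hypotheses on the radial intervals and on the range of the angular indices; that version is in turn obtained by re-running the proof of Theorem \ref{th1.1} with the constants adjusted.

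\textbf{Step 1: reducing the enlarged sets to the original ones.} Put $(J_i^{(\delta)})^{**}=[\alpha_i-2\delta,\beta_i+2\delta]$. The starting point is the inclusion
\begin{equation*}
(u_{\mu,J_i}^{(\delta)})^*\ \subset\ \bigcup_{|j|\le 1}\ \bigcup_{|k|\le 5}\Big((0,k\delta)+u_{\mu+j,\,(J_i^{(\delta)})^{**}}^{(\delta)}\Big),\qquad \mu\in\mathcal N_*^{(\delta)},\ i=1,2,
\end{equation*}
valid because $(\Gamma_\mu^{(\delta)})^*=\Gamma_{\mu-1}^{(\delta)}\cup\Gamma_\mu^{(\delta)}\cup\Gamma_{\mu+1}^{(\delta)}$, the radial interval of $(u_{\mu,J_i}^{(\delta)})^*$ is exactly $(J_i^{(\delta)})^{**}$, and the band $\{|\eta-|\xi||\le 6\delta\}$ is the union of the eleven translated bands $\{|\eta-k\delta-|\xi||\le\delta\}$, $|k|\le 5$. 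Here $(J_i^{(\delta)})^{**}\subset[1/2,5/2]$ with $|(J_i^{(\delta)})^{**}|\le\delta^{1/2}+4\delta\le 3\delta^{1/2}$, and for $\mu\in\mathcal N_*^{(\delta)}$, $|j|\le 1$ one has $|\mu+j|\le(\pi/7)\delta^{-1/2}\le(\pi/6)\delta^{-1/2}-1$ since $\delta=\tau^{-1}<10^{-6}$. Summing characteristic functions over $(\mu,\nu)\in(\mathcal N_*^{(\delta)})^2$ and using $\chi_{A+B}\le\sum_{i,i'}\chi_{A_i+B_{i'}}$ whenever $A\subset\bigcup_iA_i$, $B\subset\bigcup_{i'}B_{i'}$, the estimate splits into finitely many ($3\cdot 3\cdot 11\cdot 11$) sums; in each of them a vertical translate $(0,k\delta)$ of a Minkowski sum is again a translate of that sum, hence disappears after a shift in $\eta$, and $\mu\mapsto\mu+j$ is injective. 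So Theorem \ref{th4.1} follows once one knows the conclusion of Theorem \ref{th1.1} for families built from intervals $J_i\subset[1/2,5/2]$ with $|J_i|\le 3\delta^{1/2}$ and indices ranging over $|\mu|\le(\pi/6)\delta^{-1/2}-1$, with the overlap bound still independent of $J_1,J_2,\delta$; call this the \emph{generalized Theorem \ref{th1.1}}.

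\textbf{Step 2: the generalized theorem via the proof of Theorem \ref{th1.1}.} I would go through Sections \ref{s2} and \ref{s3} verbatim, with $[1,2]$ replaced by $[1/2,5/2]$, the size bound $|J_i|\le\delta^{1/2}$ replaced by $|J_i|\le 3\delta^{1/2}$, and the constant $\pi/8$ replaced by $\pi/6$ in the range of the indices. The structural decomposition does not change: one splits by $a=(\mu+\nu)/2$, passes to a $C_0$-separated subset of $\mathcal I$, and invokes the analogues of Lemmas \ref{lem1.4} and \ref{lem1.6}. In the Lemma \ref{lem1.4} part, the bounds $|\eta'|\le 3$, $|\eta'-\eta''|\le 3$ and the diameter estimate \eqref{e.2.3} change only by bounded factors, and the resulting quadratic inequality in $m$ still forces $m\le C$. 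In the Lemma \ref{lem1.6} part, Lemmas \ref{lem3.1}, \ref{lem3.2}, \ref{lem3.3+} and \ref{lem3.4} go through with enlarged constants: in \eqref{e.3.2} the step using $\eta\cos(\ell\delta^{1/2})\ge 1$ is replaced by $\eta\cos(\ell\delta^{1/2})\ge\cos(\pi/6)>0$, costing only a bounded factor; the rectangles $R(p_{\ell,\delta};\cdot,\cdot)$ and the balls $B(\eta,\ell,\delta,c_3)$ still have diameter $O(\delta^{1/2})$ because the spread of $p_{\ell,\delta}(\eta',\eta'')$ as $(\eta',\eta'')$ varies with $\eta'+\eta''=\eta$ fixed is controlled by $|\widetilde{J_i}|\lesssim\delta^{1/2}$; and the disjointness-under-rotation property \eqref{e.3.13+} together with the set $D$ survive once $D$ is taken to be a slightly wider sub-sector of $\{\xi_1>0\}$, say with half-opening $5\pi/12<\pi/2$.

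\textbf{Main obstacle.} The one genuinely nontrivial point is the bookkeeping of angular margins in Step 2. In the original proof the hypothesis $|\mu|\le(\pi/8)\delta^{-1/2}-1$ is precisely what guarantees $|a-a'|\le(\pi/4)\delta^{-1/2}-2$ in the proof of Lemma \ref{lem1.6}; with the relaxed range one gets $|a-a'|\le(\pi/3)\delta^{-1/2}-2$, so the rotations $\mathcal R_{(a-a')\delta^{1/2}}$ and $\mathcal R_{(a-a')\delta^{1/2}/2}$ now have angle up to $\pi/3$ rather than $\pi/4$. One must re-examine Lemma \ref{lem3.4} and the choices of $a_0$ and $D$ to ensure that for every such angle the ball $B(\eta,\ell,\delta,c_3)$ is still moved off itself and everything stays inside a region on which the splitting $D=D_1\cup D_2$ with $\mathscr E\subset D_1$, $\mathscr E_b\subset D_2$ is available. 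Since $\pi/3$ remains comfortably below $\pi/2$ (and the relevant balls and curves live in a fixed bounded region away from the origin and away from the $\xi_1=0$ line after rotation), this can be arranged; but it is the place where the argument is not a purely mechanical change of constants, and where care is needed to keep the overlap bound uniform in $\delta$.
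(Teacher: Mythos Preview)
Your proposal is correct and aligns with the paper's approach: the paper's entire proof of Theorem \ref{th4.1} is the one-line remark that it follows ``by examining the proof of Theorem \ref{th1.1}'', i.e., by re-running the arguments of Sections \ref{s2}--\ref{s3} with constants adjusted for the enlarged sets, which is exactly what your Step 2 carries out (and your identification of the angular-margin bookkeeping in the analogue of Lemma \ref{lem1.6} as the only nontrivial point is accurate). Your Step 1 decomposition is a clean organizational layer but not strictly necessary, since the proof of Theorem \ref{th1.1} can equally well be applied directly to the starred sets with their wider sectors, thicker slabs, and larger index range.
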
 
Let $0<\epsilon <1/2$ and set 
\begin{equation*} 
\widetilde{u}_{\mu,J_i}=\{(\xi, \eta)\in \Bbb R^2\times \Bbb R: 
d(u_{\mu,J_i}, (\xi,\eta)) <\delta^{1-\epsilon}\}, \quad i=1, 2,      
\end{equation*} 
where 
\begin{equation*} 
d(u_{\mu,J_i}, (\xi,\eta))=\inf_{(\xi',\eta')\in u_{\mu,J_i}} 
(|\xi_1-\xi'_1|^2+|\xi_2-\xi'_2|^2+|\eta-\eta'|^2)^{1/2}.  
\end{equation*} 
\par 
Then we have the following. 
\begin{theorem} \label{th4.2}  We can find a positive constant $C$ 
independent of $J_1, J_2$ and $\delta$ such that 
\begin{equation*}  
\sum_{(\mu, \nu)\in \mathcal N^2} 
\chi_{\widetilde{u}_{\mu,J_1}+\widetilde{u}_{\nu,J_2}}
\leq C\delta^{-\epsilon}.  
\end{equation*} 
\end{theorem}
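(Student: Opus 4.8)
The plan is to reduce Theorem \ref{th4.2} to Theorem \ref{th1.1} (or its enlarged version Theorem \ref{th4.1}) by a covering argument. The key observation is that the fattened set $\widetilde{u}_{\mu,J_i}$, which is the $\delta^{1-\epsilon}$-neighborhood of $u_{\mu,J_i}$, can be covered by a controlled number --- roughly $C\delta^{-\epsilon}$ many, since $\delta^{1-\epsilon}/\delta=\delta^{-\epsilon}$ --- of translates of sets of the same shape as $u_{\mu,J_i}^*$. More precisely, I would first check that $\widetilde{u}_{\mu,J_i}$ is contained in a set of the form $u_{\mu,J_i}^{(\delta^{1-\epsilon})}$-type, i.e. a slab of angular width comparable to $\delta^{1/2}$ (the neighborhood in the angular variable only widens the cone by $O(\delta^{1-\epsilon})$ in Euclidean distance, which is $O(\delta^{1/2-\epsilon})\cdot\delta^{1/2}$ in angle, hence absorbed after enlarging $\Gamma_\mu$ to $(\Gamma_\mu)^*$), thickened in the $\eta$-direction to width $O(\delta^{1-\epsilon})$ and in radius to width $O(\delta^{1-\epsilon})$.

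Next I would partition the radial interval $J_i=[\alpha_i,\beta_i]$ together with the thickening direction: split $\widetilde{J_i}=[\alpha_i-\delta^{1-\epsilon},\beta_i+\delta^{1-\epsilon}]$ into $O(\delta^{-\epsilon})$ subintervals $J_i^{(k)}$ each of length $\leq \delta^{1/2}$ (this is automatic if $|J_i|\le\delta^{1/2}$ and we are only adding $2\delta^{1-\epsilon}\ll\delta^{1/2}$), and likewise handle the $\eta$-thickening by decomposing the $\delta^{1-\epsilon}$-thick slab into $O(\delta^{-\epsilon})$ pieces of $\eta$-thickness $O(\delta)$. In this way one writes
\begin{equation*}
\widetilde{u}_{\mu,J_i}\subset \bigcup_{k=1}^{K_i}\big(v_{k}+(u_{\mu,J_i^{(k)}}^{(\delta)})^*\big),\qquad K_i\le C\delta^{-\epsilon},
\end{equation*}
for suitable translation vectors $v_k$ and with $J_i^{(k)}\subset[1,2]$, $|J_i^{(k)}|\le\delta^{1/2}$. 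Then
\begin{equation*}
\chi_{\widetilde{u}_{\mu,J_1}+\widetilde{u}_{\nu,J_2}}\le \sum_{k=1}^{K_1}\sum_{k'=1}^{K_2}\chi_{v_k+v_{k'}+(u_{\mu,J_1^{(k)}}^{(\delta)})^*+(u_{\nu,J_2^{(k')}}^{(\delta)})^*},
\end{equation*}
and summing over $(\mu,\nu)\in\mathcal N^2$ and applying Theorem \ref{th4.1} to each fixed pair $(k,k')$ (the bound there being uniform in $J_1,J_2,\delta$ and unaffected by the fixed translation $v_k+v_{k'}$) yields the bound $C\cdot K_1 K_2\le C'\delta^{-2\epsilon}$. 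To get the sharper $C\delta^{-\epsilon}$ in the statement, I would be more careful: only the radial/angular decomposition of \emph{one} of the two factors genuinely costs a factor $\delta^{-\epsilon}$, while the $\eta$-thickening of both factors is harmless because the algebraic sum only adds the $\eta$-thicknesses, so that a single partition of the combined $\eta$-variable into $O(\delta^{-\epsilon})$ pieces suffices; organizing the decomposition so that the two "cheap" directions are shared leaves just one factor of $\delta^{-\epsilon}$.

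The main obstacle I expect is exactly this last bookkeeping point: showing that the overlap count does not degrade to $\delta^{-2\epsilon}$. The resolution is that $\widetilde{u}_{\mu,J_1}+\widetilde{u}_{\nu,J_2}$ depends on the thickening only through the \emph{sum} of the various thickening parameters, so when one decomposes $\widetilde{u}_{\mu,J_1}$ into $\delta$-thick pieces indexed by $k$ and $\widetilde{u}_{\nu,J_2}$ into $\delta$-thick pieces indexed by $k'$, the pair-sum $(u_{\mu,J_1^{(k)}})^*+(u_{\nu,J_2^{(k')}})^*$ over the diagonal-type constraint $k+k'=\text{const}$ already covers the relevant part of $\widetilde{u}_{\mu,J_1}+\widetilde{u}_{\nu,J_2}$, so effectively only $O(\delta^{-\epsilon})$ pairs $(k,k')$ contribute to any fixed point. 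Combined with Theorem \ref{th4.1} this gives the stated bound $C\delta^{-\epsilon}$. A secondary, purely technical, point is verifying that the angular $\delta^{1-\epsilon}$-neighborhood of $\Gamma_\mu$ is contained in $(\Gamma_\mu^{(\delta)})^*$ and that the indices stay inside $\mathcal N$ (or $\mathcal N_*$), which follows since $\delta^{1-\epsilon}\ll\delta^{1/2}$ for $\delta$ small and $\epsilon<1/2$; this is where the hypothesis $\epsilon<1/2$ is used.
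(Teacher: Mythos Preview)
Your approach is workable but takes a genuinely different route from the paper's. The paper does \emph{not} stay at scale $\delta$ and slice in the thickness direction. Instead it \emph{coarsens the angular scale}: setting $N=[\delta^{-\epsilon/2}]$ and $\delta_\epsilon=\delta N^2\sim\delta^{1-\epsilon}$, it groups $N$ consecutive sectors $\Gamma_\mu^{(\delta)}$ into a single sector $\Gamma_\ell^{(\delta_\epsilon)}$ (writing $\mu=\ell N+k$, $0\le k<N$), and proves directly (Lemma~4.4) that the whole fattened piece $\widetilde{u}_{\mu,J_i}$ sits inside the enlarged coarse piece $(u_{\ell,J_i}^{(\delta_\epsilon)})^*$. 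Theorem~4.1 is then applied once, at the new scale $\delta_\epsilon$, and the factor $\delta^{-\epsilon}$ arises simply as $N^2$, the number of fine pairs $(\mu,\nu)$ mapping to a fixed coarse pair $(\ell,\ell')$. No translation or diagonal bookkeeping is needed.

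Your argument instead keeps the angular scale $\delta^{1/2}$ and slices the $O(\delta^{1-\epsilon})$ thickness into $O(\delta^{-\epsilon})$ vertical translates of $(u_{\mu,J_i}^{(\delta)})^*$; you then exploit that the translates are the same for every $\mu$ and that only the sum $s_k+s_{k'}$ enters in $\widetilde{u}_{\mu,J_1}+\widetilde{u}_{\nu,J_2}$, collapsing $K_1K_2$ to $O(\delta^{-\epsilon})$. This is correct. Two clarifications, though: first, neither the radial nor the angular enlargement costs a factor of $\delta^{-\epsilon}$ at all---both are by $O(\delta^{1-\epsilon})\ll\delta^{1/2}$ and are absorbed into $(\Gamma_\mu)^*$ and a slightly enlarged $J_i$ (still of length $\le 2\delta^{1/2}$), so your sentence ``only the radial/angular decomposition of one of the two factors genuinely costs a factor $\delta^{-\epsilon}$'' is misstated; the only genuine cost is the $\eta$-slicing. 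Second, the translations $v_k$ must be taken in the fixed vertical direction (not normal to the cone), so that they are independent of $\mu$ and can be pulled outside the $(\mu,\nu)$-sum; this is implicit in your write-up but should be made explicit. The paper's rescaling is cleaner and avoids this bookkeeping; your method is perhaps more elementary in that it invokes Theorem~4.1 only at the original scale.
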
 
To prove Theorem \ref{th4.2} by applying Theorem \ref{th4.1}, we need the 
following. 

\begin{lemma} \label{lem4.3} 
Let $N=[\delta^{-\epsilon/2}]$, 
$\delta_\epsilon=\delta[\delta^{-\epsilon/2}]^2\sim \delta^{1-\epsilon}$, 
where $[\alpha]=\max\{m\in \Bbb Z: m\leq \alpha\}$ for $\alpha\in \Bbb R$.  
 Suppose that $\mu\in \mathcal N^{(\delta)}$ and $\mu=\ell N+k$ for some 
$\ell\in \Bbb Z$ and $k\in [0,N-1]\cap \Bbb Z$.  
Then 
\begin{equation*} 
\Gamma_{\ell N+k}^{(\delta)} \subset 
\{\ell \delta_\epsilon^{1/2} \leq \arg \xi< 
(\ell +1)\delta_\epsilon^{1/2}\}. 
\end{equation*} 
\end{lemma}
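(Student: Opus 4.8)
\textbf{Proof plan for Lemma \ref{lem4.3}.} The plan is to compute the angular interval that $\Gamma_{\ell N+k}^{(\delta)}$ occupies and check that it lies inside the claimed $\delta_\epsilon$-scale wedge. First I would recall the definitions: by \eqref{e.1.1}, if $\xi\in\Gamma_{\mu}^{(\delta)}$ with $\mu=\ell N+k$, then
\begin{equation*}
(\ell N+k)\delta^{1/2}\leq \arg\xi < (\ell N+k+1)\delta^{1/2}.
\end{equation*}
Since $k$ ranges over $[0,N-1]\cap\Bbb Z$, the worst case is $k=0$ on the left and $k=N-1$ on the right, giving $\ell N\delta^{1/2}\leq \arg\xi < (\ell+1)N\delta^{1/2}$. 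So the whole block of $N$ consecutive wedges $\Gamma_{\ell N}^{(\delta)},\dots,\Gamma_{\ell N+N-1}^{(\delta)}$ sits inside $[\ell N\delta^{1/2},(\ell+1)N\delta^{1/2})$.

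Next I would relate $N\delta^{1/2}$ to $\delta_\epsilon^{1/2}$. By definition $N=[\delta^{-\epsilon/2}]$ and $\delta_\epsilon=\delta N^2=\delta[\delta^{-\epsilon/2}]^2$, so $\delta_\epsilon^{1/2}=\delta^{1/2}N=N\delta^{1/2}$ exactly (both $N$ and $\delta$ are nonnegative). Therefore $\ell N\delta^{1/2}=\ell\,\delta_\epsilon^{1/2}$ and $(\ell+1)N\delta^{1/2}=(\ell+1)\,\delta_\epsilon^{1/2}$, and the inclusion
\begin{equation*}
\Gamma_{\ell N+k}^{(\delta)}\subset\{\xi:\ \ell\,\delta_\epsilon^{1/2}\leq \arg\xi<(\ell+1)\,\delta_\epsilon^{1/2}\}
\end{equation*}
is immediate from the two preceding displays. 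The equivalence $\delta_\epsilon\sim\delta^{1-\epsilon}$ noted in the statement follows from $\delta^{-\epsilon/2}-1<[\delta^{-\epsilon/2}]\leq\delta^{-\epsilon/2}$, which gives $\delta^{1-\epsilon}(1-\delta^{\epsilon/2})^2<\delta_\epsilon\leq\delta^{1-\epsilon}$; since $\delta$ is small this is a two-sided comparison with absolute constants, but it is not actually needed for the inclusion itself.

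Honestly, there is no real obstacle here: the lemma is a bookkeeping identity about how dyadic-type angular blocks nest, and the only thing to be careful about is that $\delta_\epsilon^{1/2}=N\delta^{1/2}$ holds \emph{on the nose} (not merely up to constants), which is exactly why $\delta_\epsilon$ was defined as $\delta N^2$ rather than as $\delta^{1-\epsilon}$. I would state the $k=0$ and $k=N-1$ extremal cases explicitly to make the containment transparent, then conclude. One minor point worth a sentence: one should note that $\ell$ is automatically in the right range for the conclusion to be meaningful, i.e. $|\ell|\le (\pi/8)\delta_\epsilon^{-1/2}$ follows from $\mu\in\mathcal N^{(\delta)}$ together with $\mu=\ell N+k$, $0\le k\le N-1$; but this is also a trivial arithmetic check and not the substance of the lemma.
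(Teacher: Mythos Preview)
Your argument is correct and matches the paper's proof essentially line for line: write out the defining inequality $(\ell N+k)\delta^{1/2}\le\arg\xi<(\ell N+k+1)\delta^{1/2}$, use $0\le k\le N-1$ to enlarge to $[\ell N\delta^{1/2},(\ell+1)N\delta^{1/2})$, and then invoke the exact identity $N\delta^{1/2}=\delta_\epsilon^{1/2}$. Your additional remarks about $\delta_\epsilon\sim\delta^{1-\epsilon}$ and the range of $\ell$ are accurate but go slightly beyond what the paper records for this lemma.
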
 
\begin{proof} 
We have  
\begin{align*} 
\Gamma_{\ell N+k}^{(\delta)}&=\{(\ell N+k)\delta^{1/2} \leq \arg \xi< 
(\ell N+k+1)\delta^{1/2}\} 
\\ 
&\subset \{\ell N\delta^{1/2} \leq \arg \xi< 
(\ell N+N)\delta^{1/2}\} 
\\ 
&= \{\ell N\delta^{1/2} \leq \arg \xi< 
(\ell +1)N\delta^{1/2}\} 
\\ 
&= \{\ell \delta_\epsilon^{1/2} \leq \arg \xi< 
(\ell +1)\delta_\epsilon^{1/2}\}.  
\end{align*} 
This completes the proof. 
\end{proof} 
We also need the following. 
\begin{lemma} \label{lem4.4} 
Let $\mu, \ell, N, k$ be as in Lemma $\ref{lem4.3}$ with $\ell \in 
\mathcal N_*^{(\delta_\epsilon)}$. Then, 
if $\delta$ is small enough,  we have 
\begin{equation*}  
\widetilde{u}_{\mu, J_i}= \widetilde{u}_{\ell N+k, J_i} 
\subset (u_{\ell, J_i}^{(\delta_\epsilon)})^*, \quad i=1, 2.  
\end{equation*} 
\end{lemma}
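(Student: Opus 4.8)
The plan is simply to unwind the definitions: given an arbitrary point of $\widetilde u_{\mu,J_i}$, I would locate a nearby point of $u_{\mu,J_i}$ and then verify, one at a time, the three conditions that define membership in $(u_{\ell,J_i}^{(\delta_\epsilon)})^*$. So let $(\xi,\eta)\in\widetilde u_{\mu,J_i}$. By definition there is $(\xi',\eta')\in u_{\mu,J_i}$ with $(|\xi-\xi'|^2+|\eta-\eta'|^2)^{1/2}<\delta^{1-\epsilon}$, whence $|\xi-\xi'|<\delta^{1-\epsilon}$ and $|\eta-\eta'|<\delta^{1-\epsilon}$; moreover $\xi'\in\Gamma_\mu^{(\delta)}$, $|\xi'|\in J_i$ and $|\eta'-|\xi'||\le\delta$. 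What has to be shown is that $\xi\in(\Gamma_\ell^{(\delta_\epsilon)})^*$, that $|\xi|\in(J_i^{(\delta_\epsilon)})^*$, and that $|\eta-|\xi||\le6\delta_\epsilon$.

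For the angular part I would start from Lemma \ref{lem4.3}, which gives $\ell\delta_\epsilon^{1/2}\le\arg\xi'<(\ell+1)\delta_\epsilon^{1/2}$. Since $\mu\in\mathcal N^{(\delta)}$ and $\ell\in\mathcal N_*^{(\delta_\epsilon)}$, both this angle and the target interval $[(\ell-1)\delta_\epsilon^{1/2},(\ell+2)\delta_\epsilon^{1/2})$ lie well inside $(-\pi/4,\pi/4)$ for $\delta$ small, so $\arg$ is continuous on the region in play and there is no branch-cut issue. Because $|\xi'|\ge1$ and $|\xi-\xi'|<\delta^{1-\epsilon}$ we get $|\xi|\ge1/2$ once $\delta$ is small, and estimating the distance from $\xi$ to the ray through $\xi'$ by $|\xi-\xi'|$ yields $|\arg\xi-\arg\xi'|\le C|\xi-\xi'|<C\delta^{1-\epsilon}$ with $C$ absolute. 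Since $1-\epsilon>(1-\epsilon)/2$ and $\delta_\epsilon^{1/2}\sim\delta^{(1-\epsilon)/2}$, we have $C\delta^{1-\epsilon}<\delta_\epsilon^{1/2}$ for $\delta$ small, so $(\ell-1)\delta_\epsilon^{1/2}\le\arg\xi<(\ell+2)\delta_\epsilon^{1/2}$, i.e. $\xi\in(\Gamma_\ell^{(\delta_\epsilon)})^*$.

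For the radial and vertical parts the key arithmetic is the two-sided comparison of scales. From $N=[\delta^{-\epsilon/2}]\ge\delta^{-\epsilon/2}-1$ one gets $\delta_\epsilon=\delta N^2\ge(1-\delta^{\epsilon/2})^2\,\delta^{1-\epsilon}$, so $\delta^{1-\epsilon}\le2\delta_\epsilon$ for $\delta$ small; one also has $\delta\le\delta^{1-\epsilon}$. Then $||\xi|-|\xi'||\le|\xi-\xi'|<\delta^{1-\epsilon}\le2\delta_\epsilon$ together with $|\xi'|\in[\alpha_i,\beta_i]$ gives $|\xi|\in[\alpha_i-2\delta_\epsilon,\beta_i+2\delta_\epsilon]=(J_i^{(\delta_\epsilon)})^*$, while $|\eta-|\xi||\le|\eta-\eta'|+|\eta'-|\xi'||+||\xi'|-|\xi||<2\delta^{1-\epsilon}+\delta\le3\delta^{1-\epsilon}\le6\delta_\epsilon$. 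Putting the three conditions together yields $(\xi,\eta)\in(u_{\ell,J_i}^{(\delta_\epsilon)})^*$; the argument does not distinguish $i=1$ from $i=2$, so both inclusions follow.

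I do not expect a real obstacle here: the content is the bookkeeping of the two length scales $\delta^{1-\epsilon}$ and $\delta_\epsilon=\delta[\delta^{-\epsilon/2}]^2$, together with the observation that an $O(\delta^{1-\epsilon})$ angular displacement is negligible against the coarser mesh $\delta_\epsilon^{1/2}\sim\delta^{(1-\epsilon)/2}$. Thus the passage from $\Gamma_\ell^{(\delta_\epsilon)}$ to $(\Gamma_\ell^{(\delta_\epsilon)})^*$ leaves a comfortable margin on the angular side, whereas the enlargements by $2\delta_\epsilon$ and $6\delta_\epsilon$ on the radial and vertical sides are used essentially sharply — which is exactly why the conclusion is only claimed for $\delta$ small enough, uniformly in $J_1,J_2$.
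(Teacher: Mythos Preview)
Your proposal is correct and follows essentially the same route as the paper's own proof: pick a nearby point of $u_{\mu,J_i}$, invoke Lemma~\ref{lem4.3} for the angular localization of that point, then verify the three membership conditions for $(u_{\ell,J_i}^{(\delta_\epsilon)})^*$ using the scale comparison $\delta^{1-\epsilon}\le 2\delta_\epsilon$ and the fact that an $O(\delta^{1-\epsilon})$ displacement causes an angular change well below $\delta_\epsilon^{1/2}$. The paper carries out the angular estimate via $|\arctan(\xi_2/\xi_1)-\arctan(\zeta_2/\zeta_1)|$ rather than your geometric phrasing, but the content and the constants are the same.
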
 
\begin{proof} 
We show the result by using Lemma \ref{lem4.3} and the definitions of 
$\widetilde{u}_{\mu, J_i}$, $u_{\ell, J_i}^{(\delta_\epsilon)}$ and 
$(u_{\ell, J_i}^{(\delta_\epsilon)})^*$ as follows.  
\par 
Fix $i$ and let $(\xi, \eta)\in \widetilde{u}_{\ell N+k, J_i}$.  
Then there exists 
$(\xi_0, \eta_0)\in u_{\ell N+k, J_i}$ such that 
\begin{equation} \label{e.4.1} 
|(\xi, \eta)-(\xi_0, \eta_0)|< \delta^{1-\epsilon}.  
\end{equation} 
Since $(\xi_0, \eta_0)\in u_{\ell N+k, J_i}$, we  
have $(\xi_0, |\xi_0|)\in U_{\ell N+k, J_i}$ and 
\begin{equation} \label{e.4.2} 
|\eta_0-|\xi_0||< \delta.  
\end{equation} 
The fact $(\xi_0, |\xi_0|)\in U_{\ell N+k, J_i}$  implies that 
$\xi_0\in \Gamma_{\ell N+k}^{(\delta)}$ and 
$|\xi_0|\in J_i$. 
By Lemma \ref{lem4.3} it follows that 
\begin{equation}\label{e.4.3} 
\ell\delta_\epsilon^{1/2}\leq \arg \xi_0 < (\ell+1)\delta_\epsilon^{1/2},   
\end{equation} 
where $|\ell|\leq (\pi/7)\delta_\epsilon^{-1/2}-1$ by the assumption that 
$\ell\in \mathcal N_*^{(\delta_\epsilon)}$.  
\par 
To prove $(\xi, \eta)\in (u_{\ell, J_i}^{(\delta_\epsilon)})^*$, we need 
the estimate 
\begin{equation} \label{e.4.5} 
|\arg \xi -\arg \xi_0|\leq \delta_\epsilon^{1/2}, 
\end{equation} 
if $0<\epsilon<1/2$ and $\delta$ is small enough. 
By \eqref{e.4.1}, \eqref{e.4.3} and the fact that $|\xi_0|\in J_i$, 
 we have $1/2<|\xi|<5/2$ and $|\arg \xi|<\pi/4$. Let 
$\xi_0=(\zeta_1, \zeta_2)$. 
Using \eqref{e.4.1}, if $\delta$ is small enough, we see that 
\begin{align*} 
|\arg\xi -\arg\xi_0|&=|\arctan(\xi_2/\xi_1)- \arctan(\zeta_2/\zeta_1)| 
\\ 
&\leq |\xi_2/\xi_1 -\zeta_2/\zeta_1| 
= |\xi_2\zeta_1-\xi_1\zeta_2|/|\xi_1\zeta_1|               \notag 
\\ 
&\leq c|\xi_1-\zeta_1|+ c|\xi_2-\zeta_2|\leq 2c\delta^{1-\epsilon} \notag 
\\ 
&\leq   4c\delta_\epsilon   \leq  \delta_\epsilon^{1/2},                                 \notag 
\end{align*} 
which proves \eqref{e.4.5}, 
where we have also used the estimates $\delta^{1-\epsilon}/2\leq 
\delta_\epsilon\leq \delta^{1-\epsilon}$, which are valid when 
$\delta$ is small enough.  
\par 
By \eqref{e.4.3} and \eqref{e.4.5} we have 
\begin{equation} \label{e.4.6} 
(\ell-1)\delta_\epsilon^{1/2}\leq \arg \xi < (\ell+2)\delta_\epsilon^{1/2}. 
\end{equation} 
Since $|\xi_0|\in J_i=[\alpha_i, \beta_i]$, by \eqref{e.4.1} it follows that 
\begin{equation}\label{e.4.7} 
\alpha_i-2\delta_\epsilon \leq |\xi|\leq \beta_i+2\delta_\epsilon. 
\end{equation} 
Also \eqref{e.4.1} and \eqref{e.4.2} imply that 
\begin{equation}\label{e.4.8} 
|\eta-|\xi||\leq |\eta-\eta_0|+|\eta_0-|\xi_0||+||\xi_0|-|\xi|| 
< \delta^{1-\epsilon}+ \delta + \delta^{1-\epsilon}<3\delta^{1-\epsilon} 
<6\delta_\epsilon. 
\end{equation} 
By \eqref{e.4.6} with $\ell\in \mathcal N_*^{(\delta_\epsilon)}$ 
 and \eqref{e.4.7} we see that $(\xi, |\xi|)\in 
(U_{\ell, J_i}^{(\delta_\epsilon)})^*$, which combined with \eqref{e.4.8} 
will imply that 
$(\xi, \eta)\in (u_{\ell, J_i}^{(\delta_\epsilon)})^*$. 
This completes the proof of Lemma \ref{lem4.4}. 
\end{proof}  
The assumption that $\ell \in \mathcal N_*^{(\delta_\epsilon)}$ in 
Lemma \ref{lem4.4} is always satisfied when $\delta$ is small. 
\begin{lemma}\label{lem4.5} 
If $\mu \in \mathcal N^{(\delta)}$, then there exist $\ell \in 
\mathcal N_*^{(\delta_\epsilon)}$ and $k\in \Bbb Z$ with $0\leq k 
\leq N-1$ such that $\mu=\ell N+k$, where $N$ is as in Lemma $\ref{lem4.3}$. 
\end{lemma}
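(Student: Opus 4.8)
The plan is to take $\ell$ and $k$ from the Euclidean division of $\mu$ by $N$ and then verify, by a direct size estimate, that the resulting $\ell$ lies in $\mathcal N_*^{(\delta_\epsilon)}$. So I would first set $\ell=[\mu/N]$ and $k=\mu-\ell N$ (note $N=[\delta^{-\epsilon/2}]\ge 1$ since $\delta<1$); then $\ell\in\Bbb Z$, $0\le k\le N-1$, and $\mu=\ell N+k$ all hold automatically by the definition of the floor, so the only real content of the lemma is the membership $\ell\in\mathcal N_*^{(\delta_\epsilon)}$, that is, $|\ell|\le\frac{\pi}{7}\delta_\epsilon^{-1/2}-1$.

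The next step is to bound $|\ell|$. I would treat the cases $\mu\ge 0$ and $\mu<0$ separately, the negative case being the one in which the floor rounds away from zero: for $\mu\ge 0$ one has $0\le\ell N\le\mu$, hence $|\ell|\le|\mu|/N$; for $\mu<0$ one has $\mu-(N-1)\le\ell N\le\mu<0$, hence $|\ell|\le|\mu|/N+1$. So $|\ell|\le|\mu|/N+1$ in all cases. Since $\delta_\epsilon=\delta[\delta^{-\epsilon/2}]^2=\delta N^2$ we have $\delta_\epsilon^{-1/2}=\delta^{-1/2}N^{-1}$, and therefore the hypothesis $|\mu|\le\frac{\pi}{8}\delta^{-1/2}-1$ gives $|\ell|\le\frac{\pi}{8}\delta_\epsilon^{-1/2}+1$.

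It then remains to check that $\frac{\pi}{8}\delta_\epsilon^{-1/2}+1\le\frac{\pi}{7}\delta_\epsilon^{-1/2}-1$, equivalently $\frac{\pi}{56}\delta_\epsilon^{-1/2}\ge 2$. For this I would invoke $\delta_\epsilon\le\delta^{1-\epsilon}$, which is immediate from $\delta_\epsilon=\delta[\delta^{-\epsilon/2}]^2\le\delta\cdot\delta^{-\epsilon}$; hence $\delta_\epsilon^{-1/2}\ge\delta^{-(1-\epsilon)/2}$, and since $0<\epsilon<1/2$ the exponent $(1-\epsilon)/2$ is positive, so $\delta^{-(1-\epsilon)/2}\to\infty$ as $\delta\to 0$ and the required inequality holds once $\delta$ is small enough (this is the sense in which the lemma is asserted; compare the proof of Lemma \ref{lem4.4}). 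Then $|\ell|\le\frac{\pi}{7}\delta_\epsilon^{-1/2}-1$, i.e. $\ell\in\mathcal N_*^{(\delta_\epsilon)}$, and the proof is complete.

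There is no genuine obstacle here: the argument is pure bookkeeping with the floor function, and the one point deserving attention is that the $\pm 1$ terms — the outward rounding coming from the division, and the ``$-1$''s built into the definitions of $\mathcal N$ and $\mathcal N_*$ — cause no trouble. They do not, precisely because the gap $\left(\frac{\pi}{7}-\frac{\pi}{8}\right)\delta_\epsilon^{-1/2}$ between the two radii grows without bound as $\delta\to 0$; this is also the reason $\mathcal N_*$ is defined with the wider constant $\pi/7$ rather than the $\pi/8$ appearing in $\mathcal N$.
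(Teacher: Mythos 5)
Your proposal is correct and follows essentially the same route as the paper: Euclidean division $\mu=\ell N+k$, the bound $|\ell|\le \frac{\pi}{8}\delta_\epsilon^{-1/2}+1$ via $\delta_\epsilon^{-1/2}=N^{-1}\delta^{-1/2}$, and the observation that the gap $\bigl(\frac{\pi}{7}-\frac{\pi}{8}\bigr)\delta_\epsilon^{-1/2}$ absorbs the additive constants once $\delta$ is small (which is exactly the implicit smallness assumption the paper also invokes). The only cosmetic difference is that the paper bounds $|\ell|=N^{-1}|\mu-k|\le N^{-1}(|\mu|+k)$ directly instead of splitting into the cases $\mu\ge 0$ and $\mu<0$.
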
 
\begin{proof} 
We write $\mu=mN+k$, $m, k\in \Bbb Z$ with $0\leq k\leq N-1$. Then 
\begin{align*} 
|m|&= N^{-1}|\mu -k|\leq N^{-1}(\frac{\pi}{8}\delta^{-1/2}-1 +k) 
\leq N^{-1}(\frac{\pi}{8}\delta^{-1/2} +N-2) 
\\ 
&=\frac{\pi}{8}\delta_\epsilon^{-1/2}+1 -2/N
\leq \frac{\pi}{7}\delta_\epsilon^{-1/2}-1, 
\end{align*} 
if $\delta$ is small enough, which implies that 
$m \in \mathcal N_*^{(\delta_\epsilon)}$.  
\end{proof}

\begin{proof}[Proof of Theorem $\ref{th4.2}$] 
We may assume that $\delta$ is small enough. 
Let $\ell, \ell'\in \mathcal N_*^{(\delta_\epsilon)}$, 
$0\leq k, k'\leq N-1$ and $\ell N+k, \ell' N+k' \in 
\mathcal N^{(\delta)}$.  
Then 
by Lemma \ref{lem4.4} we have 
\begin{equation*} 
\chi_{\widetilde{u}_{\ell N+k, J_1}+
 \widetilde{u}_{\ell' N+k', J_2}} 
\leq \chi_{(u_{\ell, J_1}^{(\delta_\epsilon)})^*+
(u_{\ell', J_2}^{(\delta_\epsilon)})^*}. 
\end{equation*} 
Therefore applying Lemma \ref{lem4.5} and Theorem \ref{th4.1} with 
$\delta_\epsilon$ in place of $\delta$, we have 
\begin{align*}  
\sum_{(\mu, \nu)\in \mathcal N^2} 
\chi_{\widetilde{u}_{\mu,J_1}+\widetilde{u}_{\nu,J_2}} 
&\leq \sum_{\ell, \ell'\in \mathcal N_*^{(\delta_\epsilon)}}
\sum_{k, k'\in [0,N-1]\cap \Bbb Z} 
\chi_{\widetilde{u}_{\ell N+k, J_1}+
 \widetilde{u}_{\ell' N+k', J_2}} 
\\ 
&\leq N^2 \sum_{(\ell, \ell')\in (\mathcal N_*^{(\delta_\epsilon)})^2} 
\chi_{(u_{\ell, J_1}^{(\delta_\epsilon)})^*+ 
(u_{\ell', J_2}^{(\delta_\epsilon)})^*}  
\\ 
&\leq CN^2 
\\ 
&\leq C\delta^{-\epsilon}. 
\end{align*} 
This completes the proof of Theorem \ref{th4.2}.  
\end{proof}


\begin{thebibliography}{99} 
\bibitem{Bo} J. Bourgain, 
{\it Averages in the plane over convex curves and maximal operators},  
J. Analyse Math. {\bf 47} (1986), 69--85.  


\bibitem{Ca} A. Carbery, {\it The boundedness of the maximal Bochner-Riesz 
operators on $L^4(\Bbb R^2)$}, Duke math. J. {\bf 50} (1983), 409--416.   

\bibitem{Ca2} A. Carbery, {\it A weighted inequality for the maximal 
Bochner-Riesz operator on $\Bbb R^2$}, Trans. Amer. Math. Soc. {\bf 287} (1985), 673--680.   

\bibitem{Ca-Sj} L. Carleson and P. Sj\"{o}lin, 
{\it Oscillatory integrals and a multiplier problem for 
the disc}, Studia Math. {\bf 44} (1972), 287--299. 

\bibitem{Fe} C. Fefferman, 
{\it A note on spherical summation multipliers}, Israel J. Math. 
{\bf 15} (1973), 44-52. 

\bibitem{Sogg}C. D. Sogge, {\it Fourier Integrals in Classical Analysis}, 
Cambridge University Press, 1993.  

\bibitem{SW} E.~M.~Stein and S.~Wainger, 
{\it Problems in harmonic analysis related to curvature},  
 Bull. Amer. Math. Soc.  {\bf 84} (1978),   1239--1295.    

\end{thebibliography}
\end{document}